\DeclareFontFamily{OT1}{pzc}{}
\DeclareFontShape{OT1}{pzc}{m}{it}{<-> [1.15] rpzcmi}{}
\DeclareMathAlphabet{\mathzc}{OT1}{pzc}{m}{it}
\def\End{\operatorname{End}\kern-.5pt}
\def\KK{{\mathzc K\kern0pt}}
\def\vv{{\mathzc v\kern.5pt}}
\def\aq{/\kern-2pt/}
\def\Hom{\operatorname{Hom}}
\def\La{{\Lambda}}
\def\fS{{\mathfrak S}}
\def\End{{\text{\rm End}}}
\def\Hom{{\text{\rm Hom}}}
\def\sZ{{\mathcal Z}}
\def\sI{{\mathcal I}}
\def\sJ{{\mathcal J}}
\def\sR{{\mathcal R}}
\def\sB{{\mathcal B}}
\def\sQ{{\mathcal Q}}
\newtheorem{theorem}{Theorem}[section]
\newtheorem{lemma}[theorem]{Lemma}
\newtheorem{proposition}[theorem]{Proposition}
\newtheorem{corollary}[theorem]{Corollary}
\theoremstyle{definition}
\newtheorem{definition}[theorem]{Definition}
\newtheorem{remark}[theorem]{Remark}
\numberwithin{equation}{theorem}
\def\bZ{{\mathbb Z}}
\def\fp{{\mathfrak p}}
\def\scc{{\textsc{c}}}
\title[Presenting Hecke endomorphism algebras]{Presenting Hecke endomorphism algebras by Hasse quivers with relations}
\date{12 Dec 2014}
\author{Jie Du}
\address{JD: School of Mathematics and Statistics,
University of New South Wales, Sydney NSW 2052, Australia}
\email{j.du@unsw.edu.au}
\author{Bernt Tore Jensen}
\address{BTJ: NTNU, Norwegian University of Science and Technology, Teknologvn. 22, 2815 Gj\o vik, Norway}
\email{bernt.jensen@ntnu.no}
\author{Xiuping Su}
\address{XS: Mathematical Sciences, University of Bath, Bath BA2 7AY, U.K.}
\email{xs214@bath.ac.uk}
\thanks{The authors gratefully acknowledge support from ARC and EPSRC under grants DP120101436 and EP/1022317/1. The work was initiated during the first author's visit to the University of Bath at the end of 2013 and was completed while the second and third authors were visiting UNSW on their sabbatical leave in 2015. We
would like to thank UBath and UNSW for their hospitality during the writing of the paper.}
\date{\today}
\begin{document}

\baselineskip16pt
\def\hei{\relax}
\maketitle

\begin{abstract} A Hecke endomorphism algebra is a natural generalisation of the $q$-Schur algebra associated with the symmetric group to a Coxeter group. For Weyl groups, B. Parshall, L. Scott and the first author \cite{DPS,DPS4} investigated the stratification structure of these algebras in order to seek applications to representations of finite groups of Lie type. 
In this paper we investigate the presentation problem for Hecke endomorphism algebras associated with arbitrary Coxeter groups.
Our approach is to present such algebras by quivers with relations. If $R$ is the localisation of $\mathbb Z[q]$ at the polynomials with the constant term 1, the algebra can simply be defined by the so-called idempotent, sandwich and extended braid relations. As applications of this result,
we first obtain  a presentation of the 0-Hecke endomorphism algebra over $\mathbb{Z}$ and then develop an algorithm for presenting the Hecke endomorphism algebras
over $\mathbb Z[q]$ by finding torsion relations. As examples, we determine the torsion relations required for all rank 2 groups and the symmetric group $\fS_4$.
\end{abstract}

\section{Introduction}
Since I. Schur first introduced Schur algebras for linking representations of the general linear group $GL_n(\mathbb C)$, a continuous group,
with those of the symmetric group $\mathfrak S_r$, a finite group, this class of algebras has been generalised to various objects in several directions. For example, by the role they play in the Schur--Weyl duality, there are affine and super generalisations  and their quantum analogues, which are known as (affine, super) $q$-Schur algebras (see, e.g.,
\cite{Ji}, \cite{DJ}, \cite{GV}, \cite{Mi}). As homomorphic images of a universal enveloping algebra, generalised Schur algebras and their quantum analogues are investigated not only for type $A$ but also for other types (see, e.g., \cite{Do}, \cite{DS}). By their definition as endomorphism algebras of certain permutation modules of symmetric groups, they have a natural generalisation to Hecke endomorphism algebras associated with finite Coxeter groups (see, e.g., \cite{Du1}, \cite{DPS}, \cite{DS2}).

Recently, with a completely different motivation, advances have been made in categorifying Hecke endomorphism algebras. For example, M. Mackaay, et al, obtained a diagrammatic categorification of the $q$-Schur algebra \cite{MSV} and some affine counterpart \cite{MT}, while G. Williamson \cite{GW} investigated a more general class of Hecke categories associated with any Coxeter systems $(W,S)$ (called Schur algebroids loc.cit.), and categorified them in terms of  singular Soergel bimodules. As seen from these works, a presentation of an algebra by generators and relations
is related to the categorification of Hecke-endomorphism algebras.
% plays an important role in categorifying the algebra.
In fact, the question of presenting the Hecke endomorphism algebras by generators and relations was 
raised in Remark 2.2 in \cite{GW}.

Unlike the $q$-Schur algebras, Hecke endomorphism algebras do not have a direct connection to Lie algebras and quantum groups and thus cannot be presented as a quotient of a quantum group. However, by viewing them as enlarged Hecke algebras, they can be presented with generators and relations, which are rooted in Hecke (algebra) relations. This paper is going to tackle the presentation problem in this direction.

Our approach is to present these algebras by quivers with relations, following closely the idea of using Hecke relations. First, by assuming the invertibility of all Poincar\'e polynomials in the ground ring $R$, we consider in Sections 3 and 4 a quiver $Q'$ with loops and impose the Hecke relations directly on the loops. We then replace the loops by cycles of length 2 to obtain a quiver $Q$ with no loops and braid relations are imposed explicitly again. Now, by thinking of the generators displayed in \cite[Cor.~2.12]{GW}, we take a subset labelled by $I,J\subseteq S$ satisfying $I\sqsubset J$ (see {\eqref{sqsubset}) and introduce in Section 5 a Hasse quiver $\tilde Q$ to replace $Q$. It turns out that the algebra over $R$ can simply be presented by $\tilde Q$ together with the so-called extended braid relations and some obvious relations,  called idempotent relations and sandwich relations (see Theorem \ref{tildeQ}).

%to by G. Williamson and he  further raised a question in \cite[Remark 2.2]{GW} on their defining relations. 
The first application of this result is to obtain a presentation of the 0-Hecke endomorphism algebras by specialising $q$ to 0 in Section 6. These degenerate algebras have attracted some attention (see, e.g., \cite{Norton}, \cite{carter}, \cite{fayers}, \cite{Reineke},\cite{JS}, \cite{JSY}, \cite{HNT}, \cite{DY}) and
 have also nice applications. For instance,  J. Stembridge  \cite{ST} used the 0-Hecke algebra to
give a derivation of the M$\ddot{o}$bius function of the Bruhat order, while X. He  \cite{He} gave a more elementary construction of 
a monoid structure by A.  Berenstein and D. Kazhdan \cite{BK}.  

We then investigate the integral case in Sections 7 and 8. We  analyse the gap between the presentation over $R$ and a possible presentation over the polynomial ring $\mathbb Z[q]$. The idempotent relations can be replaced by the so-called quasi-idempotent relations, the sandwich relations are unchanged. The challenge is how to replace the extended braid relations by some torsion relations. We develop an algorithm and compute the examples of rank 2 and of type $A_3$. The rank 2 case is relatively easy, the required torsion relations are simply the refined braid relations. Note that a recursive version of this case is done by B. Elias \cite[Prop. 2.20]{Elias}. However, the $A_3$ case is more complicated. On top of the refined braid relations, there are two more sets of torsion relations, see Theorem \ref{A3}. We believe that the algorithm can be used to compute the other lower rank cases.

%\vspace{.3cm}
%\noindent
%{\bf Acknowledgement.} The authors were informed that G. Williamson and his collaborator  
%has found a presentation for the type $A$ case in the context of categorification of Seorgel bimodules. The first author would like to thank him 
%for various discussions and for his excellent lectures series given at the University of Sydney in late 2013.

\section{Hecke  algebras and Kazhdan--Lusztig generators}\label{section2}
 Let $(W,S)$ be a Coxeter system and let
$\ell:W\to\mathbb{N}$ be the length function with respect to $S$ and $\leq$ the Bruhat order. 

For $I\subseteq S$, let $W_I=\langle s\mid s\in I\rangle$ be the parabolic subgroup generated by $I$.  We say 
$I\subseteq S$ is {\it finitary} if $W_I$ is finite, and  in this case we denote by $w_I$ the longest element in $W_I$.
Let
$$\La=\Lambda(W)=\{I\subseteq S\mid I\text{ is finitary}\}\;\;\text{ and }\;\;\La^*=\La\setminus\emptyset,$$
where $\emptyset\in\La$ is the empty set.

For $I\in\La$, we will also denote by $D_I:={D}_{W_I}$ the set of all
distinguished (or shortest) representatives of the right
cosets of $W_I$ in $W$. 
Let
${D}_{IJ}={D}_I\cap {D}^{-1}_J$, where $I,J\in\La$.
Then ${D}_{IJ}$ is the set of shortest
$W_I$-$W_J$ double coset representatives. Symmetrically, let ${D}_{IJ}^+$ be the set of longest
 representatives of $W_I$-$W_J$ double cosets.

For
$d\in{D}_{IJ}$, the subgroup $W_I^d\cap
W_J=d^{-1}W_I d\cap W_J$ is a parabolic subgroup associated
with an element in $\La$,  which will be denoted by $I^d\cap J\in\La$. In other
words, we define
\begin{equation}\label{ladmu}
W_{I^d\cap J}=W_I^d\cap W_J.
\end{equation}
Moreover, the map
$$W_I\times(D_{I^d\cap J}\cap W_J)\longrightarrow W_IdW_J, \quad (w,y)\longmapsto wdy$$
is a bijection.

%Set $\mathcal{A}=\mathbb{Z}[q]$. 
The Hecke algebra $H_q=H_q(W)$ corresponding to
$W$ is a free $\mathbb{Z}[q]$-module, with basis $\{T_w\mid w\in
W\}$ and generators $T_s$, $s\in S$, subject to the relations (see, e.g., \cite[Ch.~7]{DDPW}): 
\begin{itemize}
\item[(H1)] $T_s^2=(q-1)T_s+q$ for any $s\in S$;
\item[(H2)] $\underbrace{T_sT_tT_s\dots}_{m_{s,t}}=\underbrace{T_tT_sT_t\dots}_{m_{s,t}}$, where $s\neq t$ and $m_{s,t}$ is the order of $st$ in $W$.
\end{itemize}  
Here $T_w=T_{s_1}T_{s_2}\cdots T_{s_l}$ if $w=s_1s_2\cdots s_l$ is a reduced expression.
The relations (H2) are called  braid relations associated with $W$ and we call the relations (H1) and (H2) {\it Hecke  relations}. Note that $H_q$ admits an anti-automorphism
$$\iota:H_q\longrightarrow H_q,\;\; T_w\longmapsto T_{w^{-1}}.$$

For $y,w\in W$, let $P_{y,w}\in\mathbb{Z}[q]$ be the associated Kazhdan--Lusztig polynomial as defined in \cite{KL}. Then, $P_{w,w}=1$, $P_{y,w}=0$ unless $y\leq w$, and $P_{y,w}$ for $y< w$ has degree $\leq \frac12(\ell(w)-\ell(y)-1)$.
Following \cite{KL} (cf. \cite[p.189]{DPS}), let
\begin{equation}\label{C+}
C_w^+=\sum_{y\leq w}P_{y,w}T_y.
\end{equation}
Then $\{C_w^+\}_{w\in W}$ is a new basis for $H_q$ and the following multiplication formula holds:
\begin{equation}\label{CsCw}
C_s^+C_w^+=\begin{cases}(q+1)C_w^+,&\text{ if }sw<w,\\
C_{sw}^++q\sum_{y<w, sy<y}\mu(y,w)q^{\frac12(\ell(w)-\ell(y)-1)}C_y^+, &\text{ otherwise},\end{cases}
\end{equation}
where $\mu(y,w)q^{\frac12(\ell(w)-\ell(y)-1)}$ is the leading term of $P_{y,w}$ if $\mu(y,w)\neq0$. 

For each $w\in W$, fix a reduced expression $\underline{w}=(s_1,s_2\ldots,s_l)$ for $w$ (thus, $w=s_1s_2\cdots s_l$ and $\ell(w)=l$). Let 
\begin{equation}\label{C+2}
C^+_{\underline{w}}=C_{s_1}^+C_{s_2}^+\cdots C_{s_l}^+.
\end{equation} 
Then \eqref{CsCw} implies that there exist $a_{\underline{y},\underline{w}}\in\mathbb Z[q]$ such that
\begin{equation}\label{C+3}
C_w^+=C^+_{\underline{w}}+q\sum_{\underline{y}<\underline{w}}a_{\underline{y},\underline{w}}C^+_{\underline{y}},
\end{equation}
where $\underline{y}<\underline{w}$ denotes a subsequence of $\underline{w}$ obtained by removing some terms in the sequence.

%$\underline{y}$ is a reduced expression of $y$ and is also a subsequence of $\underline{w}$.

For each $I\in \La$, write  
\begin{equation}\label{x_I}
x_I:=C^+_{w_I}=\sum_{w\in W_I}T_w.
\end{equation}

\begin{definition}\label{S(m|n,r)} The $\mathbb{Z}[q]$-algebra $E_q=E_q(W)=\End_{H_q}(\bigoplus_{I\in\La}x_IH_q)$
is called the {\it Hecke endomorphism algebra} associated to the Coxeter group $W$. 
Specialising $q$ to $0$, the resulting $\mathbb Z$-algebra $E_0=E_q\otimes\mathbb Z$
is called the {\it 0-Hecke endomorphism algebra} associated with $W$.
\end{definition}

Let
 $$x_{s}=C_s^+=T_s+1.$$ The elements $x_{s},s\in S$, generate $H_q$. We now describe a presentation for $H_q$ with these {\it Kazhdan--Lusztig generators}.

For $\{s,t\}\in\La$ and $m\leq m_{s,t}$, let 
$$T_{[m]t}=\underbrace{\cdots T_tT_sT_t}_{m}\text{ and }T_{t[m]}=\underbrace{ T_tT_sT_t\cdots}_{m},$$
%$$%\aligned{l}
%\begin{equation*}
%x_{(1,2)}^{(1)}&=1+T_1=x_{\{1\}},\\
%x_{(1,2)}^{(2)}&=1+T_1+T_2+T_{12}=x_{\{1\}}x_{\{2\}},  \mbox{ and }\\
%\begin{split}
%x_{(1, 2)}^{(1)} &=1+T_1, \\%=x_{\{1\}},  \\
%x_{(1, 2)}^{(2)} &=1+T_1+T_2+T_{12}, \\%=x_{\{1\}}x_{\{2\}},  \mbox{ and }  \\
and let
$$x_{(s, t)}^{(m)}=\begin{cases}1+T_t,&\mbox{ if } m=1;\\
1+\sum_{i=1}^{m-1}(T_{[i]s}+T_{[i]t})+T_{[m]t},& \mbox{ if } m\geq 2.
\end{cases}$$

Since the generating relations for any Hecke-algebra are determined by the generating relations of 
its rank 2 subalgebras, we have the following. 

\begin{lemma}\label{NewpreHecke}
The Hecke algebra $H_q$ can be presented by generators $x_{s},s\in S$ with
\begin{itemize}
\item[(a)] {\sf Quasi-idempotent relations:}
$(x_{s})^{2}=(q+1)x_{s}$ and 
\item[(b)] {\sf Braid relations:} $ x_{(s,t)}^{(m_{s,t})}=x_{(t,s)}^{(m_{s,t})}$
for all $s,t\in S.$
\end{itemize}
\end{lemma}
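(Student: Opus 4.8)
The plan is to obtain the new presentation from the standard one, relations (H1)--(H2), by the invertible change of generators $x_s=1+T_s$. Since $T_s=x_s-1$, the elements $x_s$ $(s\in S)$ again generate $H_q$, and $T_s\mapsto x_s-1$ is an isomorphism of the free $\mathbb Z[q]$-algebra on $\{T_s\}_{s\in S}$ onto that on $\{x_s\}_{s\in S}$. A Tietze transformation then carries the presentation of $H_q$ to one on the generators $x_s$ whose defining relations are the images of (H1) and (H2) under $T_s\mapsto x_s-1$; so it suffices to identify these images with (a) and (b).

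For the image of (H1), substituting $T_s=x_s-1$ into $T_s^2=(q-1)T_s+q$ yields $(x_s-1)^2=(q-1)(x_s-1)+q$, which simplifies to $x_s^2=(q+1)x_s$; the steps reverse, so (H1) and (a) are equivalent, one generator at a time.

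For the braid relations I would argue inside each rank-two parabolic. This is legitimate because, as recalled before the lemma, every defining relation of $H_q$ involves at most two generators --- (H1) a single $s$, (H2) a single pair $\{s,t\}$ --- so the presentation is controlled by the dihedral subalgebras $H_q(W_{\{s,t\}})$. Reading the $T_{[i]u}$ occurring in the definition of $x_{(s,t)}^{(m)}$ as the alternating products in the $x$-generators via $T_u=x_u-1$, I would compute the difference: for $m=m_{s,t}\geq 2$ the common summands $\sum_{i=1}^{m-1}(T_{[i]s}+T_{[i]t})$ cancel, leaving
\[
x_{(s,t)}^{(m)} - x_{(t,s)}^{(m)} = T_{[m]t} - T_{[m]s}.
\]
Here $T_{[m]s}$ and $T_{[m]t}$ are precisely the two reduced expressions of the longest element $w_{\{s,t\}}$ of $W_{\{s,t\}}$, one ending in $s$ and the other in $t$; thus their equality is exactly the braid relation (H2). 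Hence the image of (H2) and the relation (b) are the same element of the free algebra up to sign, and (b) is equivalent to (H2). If one prefers (b) displayed as an equality of alternating products of the $x_s$, one rewrites $x_{(s,t)}^{(m)}$ using the quasi-idempotent relation (a), e.g. $x_{(s,t)}^{(1)}=x_t$, $x_{(s,t)}^{(2)}=x_sx_t$, and $x_tx_sx_t=x_{(s,t)}^{(3)}+qx_t$.

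The step I expect to require the most care is the dihedral bookkeeping: confirming that the alternating words $T_{[i]s},T_{[i]t}$ occurring in $x_{(s,t)}^{(m)}$ and $x_{(t,s)}^{(m)}$ are literally the same (so that they cancel), and tracking the parity of $m=m_{s,t}$ to see that the residual terms $T_{[m]t}$ and $T_{[m]s}$ are the two reduced expressions of $w_{\{s,t\}}$, so that the leftover $T_{[m]t}-T_{[m]s}$ carries exactly the braid-relation content. Once this is settled, together with the rank-two reduction, the equivalences (H1)$\Leftrightarrow$(a) and (H2)$\Leftrightarrow$(b) yield the presentation claimed in the lemma.
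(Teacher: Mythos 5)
Your proof is correct and takes essentially the same approach as the paper: the paper disposes of this lemma in a single sentence by invoking the rank-2 (dihedral) reduction, and your argument via the change of generators $T_s\mapsto x_s-1$ --- checking that the image of (H1) is (a) and that, after cancelling the common summands $\sum_{i=1}^{m-1}(T_{[i]s}+T_{[i]t})$, the image of (H2) is exactly (b) --- is precisely the routine computation the paper leaves implicit.
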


We now write the elements $ x_{(s,t)}^{(m)}$ $(m\leq m_{s,t})$ as polynomials in $x_{r}$. Assume first that $\{s,t\}=\{1,2\}$ generates an {\it infinite} dihedral subgroup in the next two lemmas. Thus, the condition $m\leq m_{s,t}$ can be dropped.

\begin{lemma} \label{lastthm} We have  the following recursive formulas for $x_{( 2, 1)}^{(m)}$.
%$$
$$
 x^{(m)}_{(2, 1)}=
\left\{
\begin{tabular}{ll} $x_{2}x_{1}$, & if $m=2$;\\
$x_{1}x_{2}x_{1}-qx_{1}$, & if  $m=3$;\\
$(x_{2}x_{1})^2-2qx_{2}x_{1}$, &  if   $m=4$;\\
$x_{(2, 1)}^{(m-2)}(x_{2}x_{1}-2q)-q^2x_{(2, 1)}^{(m-4)}$, & if  $m\geq 5$. \\
\end{tabular}
\right.
$$
\end{lemma}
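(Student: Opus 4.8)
The plan is to pass to the Kazhdan--Lusztig basis. First I would identify $x^{(m)}_{(2,1)}$ with $C^+_{u_m}$, where $u_m=\cdots s_2s_1$ is the alternating word of length $m$ ending in the generator $1$ (I write $1,2$ for the two generators $s_1,s_2$). Indeed, in the infinite dihedral group every nonidentity element has a unique reduced expression, the Bruhat interval $[e,u_m]$ consists of all elements of length $<m$ together with $u_m$, and all Kazhdan--Lusztig polynomials of a dihedral group equal $1$; comparing \eqref{C+} with the explicit sum defining $x^{(m)}_{(2,1)}$ then gives $x^{(m)}_{(2,1)}=\sum_{y\le u_m}T_y=C^+_{u_m}$. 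Writing $c_m:=C^+_{u_m}$ and $d_m:=C^+_{v_m}$ for the companion word $v_m=\cdots s_1s_2$ ending in $2$, and noting $x_1=C^+_{s_1}=c_1$, $x_2=C^+_{s_2}=d_1$ and $c_0=d_0=1$, the task becomes expressing $c_m$ through $c_1,d_1$.

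Next I would extract two one-generator recursions from \eqref{CsCw}. Since the Kazhdan--Lusztig polynomials are trivial here, $\mu(y,w)\ne0$ exactly when $y$ is covered by $w$, in which case $\mu(y,w)=1$ and the exponent $\tfrac12(\ell(w)-\ell(y)-1)$ vanishes. Applying the right-hand version of \eqref{CsCw} (its image under the anti-automorphism $\iota$) to $c_mx_2=C^+_{u_m}C^+_{s_2}$ and to $d_mx_1=C^+_{v_m}C^+_{s_1}$, and checking the single surviving lower term each time, I expect
\begin{align*}
c_mx_2 &= d_{m+1}+q\,d_{m-1}, & d_mx_1 &= c_{m+1}+q\,c_{m-1}\quad(m\ge2),\\
c_1x_2 &= d_2, & d_1x_1 &= c_2,
\end{align*}
the point being that for $m=1$ the would-be correction term sits at the identity and drops out because $es<e$ fails.

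Then I would compute $c_mx_2x_1$ by applying these twice. For $m\ge3$ all correction terms are present, and
\begin{align*}
c_mx_2x_1 &= (d_{m+1}+q\,d_{m-1})x_1 = (c_{m+2}+q\,c_m)+q(c_m+q\,c_{m-2})\\
&= c_{m+2}+2q\,c_m+q^2c_{m-2},
\end{align*}
so $c_{m+2}=c_m(x_2x_1-2q)-q^2c_{m-2}$; reindexing gives the stated recursion for all $m\ge5$. The small cases are exactly where a term drops: $c_2=d_1x_1=x_2x_1$ directly; for $c_3$, the relation $c_1x_2x_1=d_2x_1=c_3+q\,c_1$ yields $c_3=x_1x_2x_1-q\,x_1$; and for $c_4$, using $d_1x_1=c_2$ (with no $q\,c_0$ term) gives $c_2x_2x_1=c_4+2q\,c_2$, i.e. $c_4=(x_2x_1)^2-2q\,x_2x_1$.

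The only delicate point is the bookkeeping at the boundary. The general two-step recursion needs the lower correction $q\,c_{m-2}$ coming from $d_{m-1}x_1$, which is genuinely present only for $m-1\ge2$; at $m=2$ it is absent, and this is precisely why the $m=4$ formula carries no $q^2$ term and why the uniform recursion is asserted only for $m\ge5$. Getting these base cases right---together with the trivial-$\mu$ and vanishing-exponent claims underlying the two one-generator recursions---is the main thing to verify carefully; everything else is routine substitution, and the final substitution $x_2x_1=c_2$ shows all the $c_m$ are polynomials in $x_1,x_2$ as required.
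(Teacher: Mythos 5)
Your proof is correct, and it reaches the paper's pivotal identity $x^{(m)}_{(2,1)}x_2x_1 = x^{(m+2)}_{(2,1)} + 2q\,x^{(m)}_{(2,1)} + q^2 x^{(m-2)}_{(2,1)}$ (for $m\geq 3$) by a genuinely different route. The paper works entirely in the $T$-basis: it writes $x^{(m)}_{(2,1)} = 1 + S_{[m-1]2} + S_{[m]1}$, expands $x_2x_1 = T_2T_1 + T_2 + T_1 + 1$, and verifies the identity together with the base cases $m=2,3,4$ by direct computation with the Hecke relations, using nothing beyond the defining relations. You instead identify $x^{(m)}_{(2,1)}$ with the canonical basis element $C^+_{u_m}$ of the infinite dihedral Hecke algebra --- using that dihedral elements have unique reduced expressions, that Bruhat order below $u_m$ is determined by length, and that all dihedral Kazhdan--Lusztig polynomials equal $1$ --- and then derive the recursion by applying the one-step multiplication formula \eqref{CsCw} (in its right-handed form, via the anti-automorphism $\iota$) twice. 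Your boundary bookkeeping is exactly right: the $q^2$-correction arises from $d_{m-1}x_1$ and is present only for $m-1\geq 2$, which is precisely why the $m=4$ formula has no $q^2$ term and the uniform recursion starts at $m=5$. What your approach buys is conceptual transparency: the coefficients $2q$ and $q^2$ are visibly $\mu$-terms at covered elements rather than artifacts of a computation, and the identification $x^{(m)}_{(2,1)} = C^+_{u_m}$ is of independent interest (it recasts Lemma \ref{lastlemma} as a statement about canonical basis elements). What the paper's computation buys is self-containedness: it needs no facts about dihedral Kazhdan--Lusztig polynomials, nor the compatibility $\iota(C^+_w)=C^+_{w^{-1}}$, both of which your argument quietly invokes and neither of which the paper proves or states.
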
  

\begin{proof} 
Let $$S_{[m]1}=\sum_{i=1}^{m}T_{[i]1} \; \mbox{  and  } \; S_{[m]2}=\sum_{i=1}^{m}T_{[i]2}.$$ Then 
$$
x_{(2, 1)}^{(m)}=
1+S_{[m-1]2}+ S_{[m]1}.
$$
Direct computation gives $x^{(2)}_{(2, 1)}$, $x^{(3)}_{(2, 1)}$, $x^{(4)}_{(2, 1)}$ and, for $m\geq3$,
%$$\aligned 
%S_{[m]1}T_2&=S_{[m+1]2}-T_2,\\
%S_{[m]1}T_{1}&=(q-1)S_{[m]1}+qS_{[m-1]2}+q, \\
%S_{[m]1}T_2T_1&=S_{[m+2]1}-T_1-T_2T_1,\\
%S_{[m-1]2}T_{1}&=S_{[m]1}-T_1,\\
%S_{[m-1]2}T_{2}&=(q-1)S_{[m-1]2}+qS_{[m-2]1}+q,\\
%S_{[m-1]2}T_{2}T_{1}&=(q-1)(S_{[m]1}-T_1)+qT_1+q(q-1)S_{[m-2]1}+q^2S_{[m-3]2}+q^2.
%\endaligned$$
$$\aligned x^{(m)}_{(2, 1)}x_{2}x_{1}&=x^{(m)}_{(2, 1)}(T_{2}T_{1}+T_2+T_1+1) \\
&= x^{(m+2)}_{(2, 1)} + 2q x^{(m)}_{(2, 1)}+q^2x_{(2, 1)}^{(m-2)}.
\endaligned$$
Hence, the required formula follows.
\end{proof}

By swapping the indices $1$ and $2$, we obtain 
an analogous version of Lemma \ref{lastthm} for $x_{(1, 2)}^{(m)}$.
Consequently, $x_{(2, 1)}^{(m)}$ and $x_{(1, 2)}^{(m)}$ are polynomials in $x_{1}$ and $x_{2}$ without the constant term.
Next, based on Lemma \ref{lastthm}, we will deduce an explicit formula for $x^{(m)}_{(2, 1)}$ as a linear combination of monomials 
in $x_{1}$ and $x_{2}$,

\begin{equation}\label{equation}
x^{(m)}_{(2, 1)}=
\sum_{j=1}^{m} b^m_{j}x_{[j]1},
\end{equation}
where $b^m_s\in\mathbb Z[q]$ and  $x_{[m]1}=\underbrace{\dots x_{2}x_{1}}_m$. Similarly, 
$x_{[m]2}=\underbrace{\dots x_{1}x_{2}}_m$.

\begin{lemma} \label{lastlemma}
The coefficient  $b^m_j $ is equal to $\left(\begin{matrix} j+s-1\\j-1 \end{matrix}\right)(-q)^s$ if  $s=\frac{m-j}{2}$ is an integer, and $0$ otherwise. Consequently, 
$$
x^{(m)}_{(2, 1)}=\sum_{i=0}^{\lfloor \frac{m-1}{2}\rfloor}{m-i-1\choose i}(-q)^{i}x_{[m-2i]1}.
$$
%\left\{\begin{tabular}{ll} $
%x_{1[m]}-(m-2)q x_{1[m-2]}+\dots + (-q)^{\frac{m-1}{2}}x_{1[1]} $, & if $m$ is odd; \\\\
%$x_{1[m]}-(m-2)q x_{1[m-2]}+\dots + (-q)^{\frac{m-2}{2}}x_{1[2]} $, & if $m$ is even. 
%\end{tabular}\right.
\end{lemma}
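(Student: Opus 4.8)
The plan is to prove the formula for $b^m_j$ by induction on $m$, converting the recursion of Lemma \ref{lastthm} into a recursion for the coefficients. The key observation is that, since $x_{[j]1}=\underbrace{\dots x_2x_1}_j$ ends in $x_1$, right multiplication by $x_2x_1$ simply lengthens the alternating word: $x_{[j]1}x_2x_1=x_{[j+2]1}$. Substituting the expansion \eqref{equation} into the recursion $x^{(m)}_{(2,1)}=x^{(m-2)}_{(2,1)}(x_2x_1-2q)-q^2x^{(m-4)}_{(2,1)}$ of Lemma \ref{lastthm} and comparing the coefficients of $x_{[j]1}$ then yields
$$b^m_j=b^{m-2}_{j-2}-2q\,b^{m-2}_j-q^2\,b^{m-4}_j\qquad(m\geq5),$$
with the convention $b^m_j=0$ for $j\le0$ or $j>m$.

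First I would record the base cases $m=2,3,4$, which are read off directly from the explicit values of $x^{(2)}_{(2,1)},x^{(3)}_{(2,1)},x^{(4)}_{(2,1)}$ in Lemma \ref{lastthm}: one checks $b^2_2=1$, $b^3_3=1$, $b^3_1=-q$, $b^4_4=1$, $b^4_2=-2q$, all matching the claimed formula (and all other $b^m_j$ vanish, in agreement with the parity condition). For the inductive step I would substitute the proposed closed form $b^m_j=\binom{j+s-1}{j-1}(-q)^s$ with $s=\tfrac{m-j}2$ into the right-hand side of the coefficient recursion. Tracking the shift in $m$ and $j$, the three summands carry the powers $(-q)^s$, $(-q)^{s-1}$ and $(-q)^{s-2}$ respectively; after pulling out the common factor $(-q)^s$ (using $-2q(-q)^{s-1}=2(-q)^s$ and $-q^2(-q)^{s-2}=-(-q)^s$) the assertion reduces to the purely combinatorial identity
$$\binom{j+s-3}{s}+2\binom{j+s-2}{s-1}-\binom{j+s-3}{s-2}=\binom{j+s-1}{s},$$
where I have rewritten each binomial coefficient in complementary form. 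Setting $N=j+s-3$, this reads $\binom{N}{s}+2\binom{N+1}{s-1}-\binom{N}{s-2}=\binom{N+2}{s}$, which follows from two applications of Pascal's rule: expanding $\binom{N+1}{s-1}=\binom{N}{s-1}+\binom{N}{s-2}$ on the left and $\binom{N+2}{s}=\binom{N}{s}+2\binom{N}{s-1}+\binom{N}{s-2}$ on the right produces identical expressions. The parity condition is preserved automatically, since passing from $(m,j)$ to $(m-2,j-2)$, $(m-2,j)$ and $(m-4,j)$ keeps $m-j$ even exactly when it is even for $(m,j)$.

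Finally, the displayed closed form for $x^{(m)}_{(2,1)}$ is a mere repackaging of the formula for $b^m_j$: writing $i=s$ and $j=m-2i$, the nonzero coefficients become $b^m_{m-2i}=\binom{m-i-1}{m-2i-1}(-q)^i=\binom{m-i-1}{i}(-q)^i$, while the constraint $j\ge1$ translates into $0\le i\le\lfloor\tfrac{m-1}2\rfloor$, giving exactly the stated sum.

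The one point requiring genuine care is the bookkeeping of the index shift $x_{[j]1}x_2x_1=x_{[j+2]1}$ together with the parity of $m-j$, i.e. setting up the coefficient recursion correctly and matching the three powers of $(-q)$; once this is in place the remaining verification is the elementary Pascal identity above, so I expect no serious obstacle.
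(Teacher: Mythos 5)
Your proof is correct and takes essentially the same route as the paper's: induction on $m$, extracting from Lemma \ref{lastthm} the coefficient recursion $b^m_j=b^{m-2}_{j-2}-2q\,b^{m-2}_j-q^2\,b^{m-4}_j$ and reducing the inductive step to a Pascal-rule binomial identity. The only cosmetic difference is that you treat the boundary coefficients uniformly via the convention $b^m_j=0$ for $j\le 0$ or $j>m$, whereas the paper computes $b^m_m$, $b^m_{m-2}$ and $b^m_1$ (resp.\ $b^m_2$) separately before running the recursion in the range $3\le j\le m-4$; both versions are sound.
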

\begin{proof} Use  induction on $m$.
Observe  from Lemma \ref{lastthm} that $x^{(m)}_{(2, 1)}$ has degree $m$ and is a linear combination of monomials 
with degrees of  the same parity as $m$. So $$b^m_j=0 \mbox{ if } \frac{m-j}{2}\not\in \mathbb{Z}.$$ 

Further,  by Lemma \ref{lastthm} and (\ref{equation}), $b^m_{m}=1$, $b^{m}_{m-2}=-(m-2)q$
and $b^{m}_1=(-q)^{\frac{m-1}{2}}$ if $m$ is odd (resp., $b^{m}_2=\frac m2(-q)^{\frac{m-2}{2}}$ if $m$ is even). In general, for $3\leq j\leq m-4$, let $s=\frac{m-j}{2}$, 
where $j$ and $m$ have the same parity. By Lemma \ref{lastthm}
and induction,  
\begin{equation*}
\begin{split}
b^{m}_j  &=  b^{m-2}_{j-2}-2qb^{m-2}_j-q^2b^{m-4}_j\\
&=(-q)^s \left(\begin{matrix} j+s-3\\j-3 \end{matrix}\right)-2q (-q)^{s-1} \left(\begin{matrix} j+s-2\\j-1 \end{matrix}\right)-
q^2 (-q)^{s-2}\left(\begin{matrix} j+s-3\\j-1 \end{matrix}\right)\\
&=(-q)^s \left(\begin{matrix} j+s-1\\j-1 \end{matrix}\right),
\end{split}
\end{equation*}
as required.
\end{proof}

\begin{remark}
The coefficients $b^m_{j}$ also satisfies the recursive formula 
$$b_j^m=b_{j-1}^{m-1}-qb_{j}^{m-2}.$$
This formula evaluated at $q=1$ gives Elias' recursive formula in \cite[Def.~2.10]{Elias}
$$d_m^j=d^{j-1}_{m-1}-d^j_{m-2},$$   
where $d_m^j=b^m_j|_{q=1}$. So we can view $b^m_j$ as a $q$-analogue of $d^j_m$.
\end{remark}

By definition, for $m< m_{s,t}<\infty$, the elements $ x_{(s,t)}^{(m)}$ and $x_{(t,s)}^{(m)}$ are distinct. However, if $m= m_{s,t}$, then $ x_{(s,t)}^{(m_{s,t})}=x_{(t,s)}^{(m_{s,t})}=x_{\{s,t\}}$ are two distinct expressions in the Kazhdan--Lusztig generators.

In general, we define \begin{equation}\label{sqsubset}
I\sqsubset J\iff I\subset J, |I|=|J|-1. 
\end{equation} 
Then for each $I\in\Lambda$, every total ordering on $I$: 
\begin{equation}\label{I.}
I_\bullet:\emptyset=I_0\sqsubset I_1\sqsubset I_2\sqsubset \dots \sqsubset I_m=I,
\end{equation}
 gives an expression for $x_I$ as follows. 
%Write $w_{I_\bullet}=w_mw_{m-1}\cdots w_1$ where $w_{I_i}=w_i w_{i-1} \cdots w_1$ for all $i=1,2,\ldots,m$. 
Fix a reduced expression $w_{I_\bullet}=s_ls_{l-1}\cdots s_1$ such that $w_{I_i}=s_{j_i}s_{j_i-1}\cdots s_1$ for all $i=1,\ldots,m$ and let
$\underline{w}_{I_\bullet}=(s_l,s_{l-1},\cdots ,s_1)$. 
Let
$$x_{\underline{w}_{I_\bullet}}=x_{s_l}x_{s_{l-1}}\cdots x_{s_1}.$$
 By \eqref{C+3}, we have the following.

\begin{lemma} \label{x_I}For any $I\in\Lambda$, any ordering $I_\bullet$ as in \eqref{I.}, and any reduced expression $\underline{w}_{I_\bullet}$, there exist polynomials $a_{\underline{y},\underline{w}_{I_\bullet}}\in\mathbb Z[q]$ such that
$$x_I=x_{\underline{w}_{I_\bullet}}+q\sum_{\underline{y}<\underline{w}_{I_\bullet}}a_{\underline{y},\underline{w}_{I_\bullet}}x_{\underline{y}}.$$
\end{lemma}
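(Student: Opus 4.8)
The plan is to reduce the statement to the key expansion formula \eqref{C+3}, which is already established for a single Kazhdan--Lusztig basis element. Recall that \eqref{C+3} says that for a fixed reduced expression $\underline{w}=(s_1,\ldots,s_l)$ of any $w\in W$, one has $C_w^+=C^+_{\underline{w}}+q\sum_{\underline{y}<\underline{w}}a_{\underline{y},\underline{w}}C^+_{\underline{y}}$, where the sum runs over subsequences $\underline{y}$ of $\underline{w}$. The point of the present lemma is simply to specialise this to the case $w=w_{I}$, the longest element of the finite parabolic $W_I$, and then to translate the statement from the $C^+$-notation into the $x$-notation. Since $x_I:=C^+_{w_I}$ by \eqref{x_I}, the left-hand side is immediate, so the real content is matching up the right-hand sides.

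First I would observe that the chain $I_\bullet$ in \eqref{I.} together with the compatible reduced expression $w_{I_\bullet}=s_ls_{l-1}\cdots s_1$ singles out a genuine reduced word for $w_I$; this is exactly what is guaranteed by the hypothesis that $w_{I_i}=s_{j_i}s_{j_i-1}\cdots s_1$ for each $i$, so $\underline{w}_{I_\bullet}=(s_l,\ldots,s_1)$ is a legitimate choice of reduced expression for $w=w_I$ in the sense required by \eqref{C+3}. Second, I would recall the definitions $C^+_{\underline{w}}=C^+_{s_1}\cdots C^+_{s_l}$ from \eqref{C+2} and $x_s=C_s^+$, so that under the correspondence $C^+_s\leftrightarrow x_s$ the product $C^+_{\underline{w}_{I_\bullet}}$ becomes precisely $x_{\underline{w}_{I_\bullet}}=x_{s_l}x_{s_{l-1}}\cdots x_{s_1}$, and similarly $C^+_{\underline{y}}$ becomes $x_{\underline{y}}$ for each subsequence $\underline{y}$. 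Substituting these identifications into \eqref{C+3} yields exactly the asserted formula with the very same coefficients $a_{\underline{y},\underline{w}_{I_\bullet}}$.

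The formula in Lemma \ref{x_I} is therefore essentially a direct transcription of \eqref{C+3}, and I expect no serious obstacle: the lemma statement even refers to \eqref{C+3} explicitly, so the intended proof is one line. The only point demanding a word of care is the \emph{ordering of the indices}. In \eqref{C+2} the product $C^+_{\underline{w}}$ is taken as $C^+_{s_1}C^+_{s_2}\cdots C^+_{s_l}$ with increasing index, whereas the reduced word $\underline{w}_{I_\bullet}=(s_l,s_{l-1},\ldots,s_1)$ is written in \emph{decreasing} order and $x_{\underline{w}_{I_\bullet}}=x_{s_l}\cdots x_{s_1}$ follows that convention. I would make sure that these two bookkeeping conventions are aligned — i.e.\ that the word to which \eqref{C+3} is applied really is $(s_l,\ldots,s_1)$ read left to right — so that the monomial $x_{s_l}x_{s_{l-1}}\cdots x_{s_1}$ matches $C^+_{s_l}C^+_{s_{l-1}}\cdots C^+_{s_1}$ factor by factor. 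Once this indexing is reconciled, the statement follows directly from \eqref{C+3} by setting $w=w_I$ and rewriting in terms of the Kazhdan--Lusztig generators $x_s$.
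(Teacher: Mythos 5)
Your proposal is correct and is essentially identical to the paper's own argument: the paper deduces the lemma in one line by applying \eqref{C+3} with $w=w_I$ and rewriting each product of $C^+_s$'s as the corresponding product of the generators $x_s=C_s^+$, with the same coefficients $a_{\underline{y},\underline{w}_{I_\bullet}}$. Your extra check on the ordering convention is also sound, since reading the sequence $\underline{w}_{I_\bullet}=(s_l,s_{l-1},\ldots,s_1)$ from left to right makes $x_{\underline{w}_{I_\bullet}}=x_{s_l}x_{s_{l-1}}\cdots x_{s_1}$ agree factor by factor with the product convention of \eqref{C+2}.
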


%In this paper, we give a presentation of the $q$-Schur algebra over the ring 
%$\mathbb{Z}[q]$ and thus give an answer to Williamson's question on the defining relations of the algebra $S_q(W)$.
%$R$ (see \eqref{ring R} for the definition), which satisfies 
%$\mathbb{Z}[q]\subseteq R\subseteq \mathbb{Q} (q)$. The presentation then induces presentations 
%of the Schur algebra over $\mathbb{Q} (q)$ and over $\mathbb{Z}\cong R/qR$.%\mathbb{Z}[q]/\langle q\rangle$.

\section{Two preliminary lemmas}
{\it In this section, $R$ can be any commutative ring with $1$.} We prove two technical lemmas.  
Recall that a quiver $Q$ is an oriented graph, consisting of a set $Q_0$ of vertices and a set $Q_1$ of 
arrows connecting the vertices. The path algebra $RQ$ of $Q$ is the free $R$-module with basis the set 
of (oriented) paths in $Q$ and multiplication given by concatenation of paths, i.e. for any two paths 
$\alpha$ and $\beta$, 
$\alpha\cdot \beta=\alpha\beta $ if the starting vertex of $\alpha$ is the ending vertex of $\beta$ and 
$0$ otherwise. 
For any vertex $i$, we denote by $e_i$ the trivial path at $i$. A relation in $RQ$ is an $R$-linear combination of 
paths with a fixed starting vertex and a fixed ending vertex.  A set of relations $\sI$ defines a quotient algebra 
$RQ/\langle \sI\rangle$, where $\langle \sI\rangle$ is the ideal in $RQ$ generated by $\sI$.

\begin{lemma}\label{techlemma2}
Let $(Q, \sI)$ be a quiver $Q$ with relations $\sI$ and let $(Q', \sI')$ be the 
quiver obtained from $Q$ by adding a loop $\alpha$ at a vertex to $Q_1$ and a relation $\alpha - p$ to $\sI$,
where $p\in RQ$ is a cycle through the vertex. 
%(Thus,$Q'_0=Q_0$, $Q_1'=Q\cup \{\alpha\}$ and 
%$\sI'=\sI \cup \{\alpha-p\}$).
Then the inclusion of $Q$ into $Q'$ induces an $R$-algebra isomorphism 
$$RQ/\langle\sI\rangle \rightarrow RQ'/\langle\sI'\rangle. $$
\end{lemma}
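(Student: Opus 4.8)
The plan is to show that adding a loop $\alpha$ whose value is pinned down by the relation $\alpha - p$ does not change the algebra, since $\alpha$ is forced to equal the already-present element $p$. The natural map goes the other way from what we ultimately want: the inclusion $\iota\colon Q\hookrightarrow Q'$ of quivers induces an $R$-algebra homomorphism $\bar\iota\colon RQ/\langle\sI\rangle \to RQ'/\langle\sI'\rangle$, because every relation in $\sI$ remains a relation in $\sI'$ (we only enlarged the relation set). So the first step is to observe that $\bar\iota$ is well defined, which is immediate from $\sI\subseteq\sI'$ and $Q_1\subseteq Q'_1$.

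The core of the argument is to construct an inverse. I would define a homomorphism $\varphi\colon RQ' \to RQ/\langle\sI\rangle$ on the path algebra by sending each vertex $e_i$ to itself, each arrow of $Q_1$ to its own image in the quotient, and the new loop $\alpha$ to the class of the cycle $p\in RQ$. Since $RQ'$ is a free path algebra, this assignment on vertices and arrows extends uniquely to an $R$-algebra homomorphism (one just needs that $p$ is a cycle through the correct vertex so that $e_i\,\varphi(\alpha)\,e_i = \varphi(\alpha)$, matching the loop's incidence — this is exactly the hypothesis that $p$ is a cycle through the vertex at which $\alpha$ sits). Next I would check that $\varphi$ kills the ideal $\langle\sI'\rangle$: relations coming from $\sI$ are sent to zero because they are relations in $RQ/\langle\sI\rangle$, and the one new relation $\alpha - p$ maps to $\varphi(\alpha) - \varphi(p) = \bar p - \bar p = 0$. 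Hence $\varphi$ factors through $\bar\varphi\colon RQ'/\langle\sI'\rangle \to RQ/\langle\sI\rangle$.

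Finally I would verify that $\bar\varphi$ and $\bar\iota$ are mutually inverse. The composite $\bar\varphi\circ\bar\iota$ fixes every vertex and every arrow of $Q$, hence is the identity on the generators of $RQ/\langle\sI\rangle$ and so is the identity. For the other composite $\bar\iota\circ\bar\varphi$, it fixes the vertices and the arrows of $Q$, and it sends the loop $\alpha$ to $\bar\iota(\bar p)$, which is the image of $p$ in $RQ'/\langle\sI'\rangle$; but in that quotient $p$ and $\alpha$ coincide precisely because of the imposed relation $\alpha - p\in\sI'$. Therefore $\bar\iota\circ\bar\varphi$ also fixes $\alpha$ and every other generator, and so is the identity. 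This gives the claimed isomorphism.

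I do not expect a serious obstacle here; the statement is essentially a normal-form bookkeeping fact about presentations by quivers with relations. The only point demanding a little care is confirming that the new generator $\alpha$ truly becomes redundant, i.e.\ that in $RQ'/\langle\sI'\rangle$ every occurrence of $\alpha$ can be rewritten as $p$ and that this substitution is consistent — which is guaranteed because $\alpha - p$ lies in the ideal and $p$ already lives in the subalgebra generated by the arrows of $Q$. Checking that $\varphi$ respects the incidence/composability conditions (so that concatenation is sent to the right product rather than to zero) is the one place where the cycle hypothesis on $p$ is used, so I would state that verification explicitly rather than leave it implicit.
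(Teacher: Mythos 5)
Your proof is correct, and it takes a somewhat different route from the paper's. The paper's proof runs in the forward direction only: it asserts the ideal-intersection identity $RQ\cap\langle\sI'\rangle=\langle\sI\rangle$, from which the induced map $RQ/\langle\sI\rangle\to RQ'/\langle\sI'\rangle$ is a monomorphism, and then gets surjectivity by noting that every coset of $RQ'/\langle\sI'\rangle$ has a representative in $RQ$; the inverse map ($x\mapsto x$, $\alpha\mapsto p$) is only mentioned parenthetically. You instead construct that inverse honestly, as an algebra map $\varphi\colon RQ'\to RQ/\langle\sI\rangle$ via the universal property of the path algebra, check it kills $\langle\sI'\rangle$, and verify both composites are the identity on generators. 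The substitution $\alpha\mapsto p$ is the engine in both arguments, but yours is the more self-contained version: the paper's intersection claim $RQ\cap\langle\sI'\rangle=\langle\sI\rangle$ is stated without proof, and the cleanest way to justify it is precisely your retraction $\bar\varphi$ (any $x\in RQ\cap\langle\sI'\rangle$ satisfies $\varphi(x)=0$, and $\varphi$ restricted to $RQ$ is the quotient map, so $x\in\langle\sI\rangle$). What the paper's phrasing buys is brevity; what yours buys is that no step is deferred, and the single point where the hypothesis ``$p$ is a cycle through the same vertex'' is needed --- the incidence condition $\bar e_i\,\bar p\,\bar e_i=\bar p$ making $\varphi$ well defined on the loop --- is flagged explicitly rather than absorbed silently.
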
 

\begin{proof} 
Since $RQ\cap\langle\sI'\rangle=\langle\sI\rangle$, the homomorphism $RQ\subset RQ'\to RQ'/\langle\sI'\rangle$ induces a monomorphism
$\phi:RQ/\langle\sI\rangle\to RQ'/\langle\sI'\rangle$. Now every coset in $RQ'/\langle\sI'\rangle$ has a representative in $RQ$. Hence,
this monomorphism is an isomorphism (and $\phi^{-1}$ is the map sending $x$ to $x$ and $\alpha$ to $p$, where $x$
is an arbitrary path in $RQ$).
%The obvious homomorphism  $$RQ/\langle\sI\rangle\rightarrow RQ'/\langle\sI'\rangle$$ and the homomorphism 
%$$RQ'/\langle \sI'\rangle\rightarrow RQ/\langle\sI\rangle,$$ sending $x$ to $x$ for any $x\in RQ$ and $\gamma$ to $p$, are mutual inverses. 
%So the results follows.
\end{proof}

Let $A$ be an associative  $R$-algebra defined by generators $a_1$, $\dots$, $a_n$ and a set $\sI$ of relations. Let 
$p_0=1, p_1, \dots, p_s$ be idempotents in $A$. Note that each $p_i$ ($i\neq0$) or $p\in\sI$ is a (noncommutative) polynomial 
$p_i(a_1, \dots, a_n)$ or $p(a_1,\ldots,a_n)$ in $a_1, \dots, a_n$. Let 
$$
B=\End_A\bigg(\bigoplus_{i=0}^sp_iA\bigg)=\bigoplus_{0\leq i,j\leq s}\Hom_A(p_iA,p_jA).
$$
Denote by $1_i$ the identity map on $p_iA$, $u_i: p_iA\rightarrow A$ the natural embedding and by $d_i: A\rightarrow p_iA$ the natural 
surjection, given by left multiplication with $p_i$. We also view $a_i$ as an element in $B$, which defines 
the map $A\rightarrow A$ by left multiplication with $a_i$. All $1_i$, $u_i,d_i$ and $a_i$ are regarded as elements of $B$ by sending other components $p_jA$ to 0.

%\begin{lemma}
%The algebra $B$ is generated by $a_i$, $u_i$ and $\pi_i$.
%\end{lemma}
%\begin{proof}
%It follows from the fact that $\Hom_A(p_iA, p_jA)\cong p_jAp_i$.
%\end{proof}
%We will give a presentation of $B$.

Let $Q'$ be the quiver with vertices $0, 1,\dots, s$, loops $\alpha_i$ at vertex $0$ for $1\leq i \leq n$, arrows $\upsilon_i$
from vertex $i$ to $0$ and $\delta_i$ from $0$ to $i$ for $1\leq i\leq s$. Let $\sJ'\subseteq RQ'$ be the set of elements obtained from 
\begin{itemize}
\item[(1)] $p':=p(\alpha_1, \dots, \alpha_s)$ for all $p\in \sI$; 
\item[(2)] $\delta_i\upsilon_i=e_i$ for  all 
$1\leq i\leq s$;  
\item[(3)] $\upsilon_i\delta_i=p_i':=p_i(\alpha_1, \dots, \alpha_s)$ for  all 
$1\leq i\leq s$. 
\end{itemize}
Let $R\langle \alpha_1, \dots, \alpha_s\rangle$ be the (centraliser) subalgebra generated by $\alpha_1, \dots, \alpha_s$ with the identity $e_0$ 
and let
$$C=RQ'/\langle\sJ'\rangle.$$
Since $R\langle e_0,  \alpha_1, \dots, \alpha_s\rangle\cap \langle\sJ'\rangle$ is the ideal generated by $p'$ for $p\in\sI$, it follows that the subalgebra $C_0$ of $C$ generated by all $\alpha_i+\langle\sJ'\rangle$ is isomorphic to $A$. Thus, as an image of $p_i$, each $p_i'$ is an idempotent in $C$.
% since $p_i(a_1, \dots, a_n)$ is an idempotent and  $\alpha_1, \dots, \alpha_s$ satisfy the same relations as $a_1, \dots, a_s$.   

%The following lemma gives rise to a presentation of $B$. 

\begin{lemma}\label{techlemma1}
The algebra $B$ with the identity $1=\sum_i 1_i$ is generated by  $a_i$, $u_j$ and $d_j$  subject to the relations
\begin{itemize}
\item[(1)] $p(a_1, \dots, a_n)$ for all $p\in \sI$;
\item[(2)] $d_iu_i=1_i$ and $u_id_i=p_i$ for all $1\leq i\leq s$;
\item[(3)] $u_i1_i=u_i=1_0u_i$ and $d_i1_0=d_i=1_id_i$ for all $1\leq i\leq s$; 
\item[(4)] $1_i1_j=0$ for $i\not= j$ and $1_i^2=1_i$ for all $i$.
\end{itemize}
\end{lemma}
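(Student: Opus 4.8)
The plan is to prove that $B = \End_A(\bigoplus_{i=0}^s p_iA)$ is isomorphic to the abstract algebra presented by the listed generators and relations. First I would verify that the generators $a_i, u_j, d_j$ (together with the $1_i$) actually satisfy all the relations in (1)--(4) inside $B$. This is the easy direction: relation (1) holds because the $a_i$ act by left multiplication on $A = p_0A$ and satisfy the defining relations $\sI$ of $A$; relations (2) follow since $d_iu_i$ is left multiplication by $p_i$ restricted to $p_iA$, which is the identity $1_i$ (as $p_i$ is idempotent and fixes $p_iA$), while $u_id_i$ is left multiplication by $p_i$ on $A = p_0A$, i.e. $p_i$ itself; relations (3) and (4) are immediate from the convention that each map is extended by zero on the other summands and that $1_i$ is the identity on $p_iA$. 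Thus there is a well-defined $R$-algebra homomorphism $\Phi$ from the abstract presented algebra onto the subalgebra of $B$ generated by these elements.

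Next I would show that the $a_i, u_j, d_j$ actually generate all of $B$, so that $\Phi$ is surjective onto $B$. The point is that $\Hom_A(p_iA, p_jA)$ is spanned by maps of the form $x \mapsto p_j r x$ for $r \in A$ (right-module homomorphisms are given by left multiplication). Concretely, any such homomorphism factors as $p_iA \xrightarrow{u_i} A \xrightarrow{r} A \xrightarrow{d_j} p_jA$, where the middle map is left multiplication by the element of $A$ corresponding to a word in the $a_i$. Since every element of $A$ is a polynomial in the $a_i$ and the identity, every component $\Hom_A(p_iA, p_jA)$ lies in the subalgebra generated by $u_i$, the $a_i$, and $d_j$. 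Summing over $i,j$ shows these elements generate $B$, so $\Phi$ is surjective.

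The main obstacle, and the heart of the argument, will be \emph{injectivity} of $\Phi$, i.e. showing that there are no relations among the generators beyond those in (1)--(4). The natural strategy is a counting/normal-form argument: I would show that in the abstractly presented algebra every element can be rewritten, using relations (1)--(4), into a canonical form, and that the number of such canonical forms matches a spanning set for $B$. Relations (3) and (4) let one reduce any word so that the $u$'s and $d$'s are ``separated'' by a block of $a$'s; specifically any reduced word in the presented algebra can be brought to the shape $u_j\, w(a)\, d_i$ (with $w$ a word in the $a_i$, and the degenerate cases $i=0$ or $j=0$ where the corresponding $u$ or $d$ is absent, giving a pure $a$-word or an $a$-word composed with a single $u$ or $d$). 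Relation (2), $d_iu_i = 1_i$, allows one to cancel adjacent $d_iu_i$ pairs so that no reduced word contains an interior $u$--$d$ crossing. One then checks that each such normal form maps under $\Phi$ to a distinct element of the appropriate $\Hom$-component, matching the spanning description from the surjectivity step. This pins down the kernel of $\Phi$ exactly as the ideal generated by (1)--(4).

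Alternatively, and perhaps more cleanly, I would deduce injectivity by comparing with the algebra $C = RQ'/\langle \sJ'\rangle$ of the preceding construction. The discussion there establishes that $C$ contains a copy $C_0 \cong A$ with idempotents $p_i'$, and by Lemma~\ref{techlemma2} the loops $\alpha_i$ in $Q'$ can be eliminated in favour of the corresponding cycles through vertex $0$, so that $C$ is isomorphic to the algebra presented by the abstract relations (1)--(4). It then suffices to identify $C$ with $B$ directly: the embeddings $\upsilon_i, \delta_i$ and loops $\alpha_i$ of the quiver map to $u_i, d_i, a_i$ in $B$, and relations (2)--(3) of $\sJ'$ correspond precisely to relations (2) of the Lemma. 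Since $B = \End_A(\bigoplus_i p_iA)$ has a known $R$-basis indexed by $(i,j)$ together with a basis of $\Hom_A(p_iA, p_jA) \cong p_jAp_i$, matching this basis against the image of the monomial basis of $C$ forces the surjection $\Phi$ to be bijective. The delicate point in either route is verifying that the normal forms are genuinely $R$-linearly independent in $B$, which reduces to the standard fact that $\Hom_A(p_iA, p_jA) \cong p_j A p_i$ as $R$-modules.
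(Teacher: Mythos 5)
Your overall skeleton---verify the relations hold in $B$, prove surjectivity using $\Hom_A(p_iA,p_jA)\cong p_jAp_i$, then establish injectivity---is the same as the paper's, and the first two steps are fine. The gap is in the injectivity step, which is the heart of the lemma. Your claim that ``each such normal form maps under $\Phi$ to a distinct element of the appropriate $\Hom$-component'' is false as stated: distinct words $w$ in the $a_i$ frequently give equal or linearly dependent maps in $B$, because $A$ itself has relations (for instance, if $a^2\in\sI$ then the nontrivial word with $w(a)=a^2$ maps to $0$). So the normal forms are linearly independent neither in $B$ nor in the presented algebra, and ``matching'' two spanning sets proves nothing: a surjection carrying a spanning set onto a spanning set need not be injective. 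What must be shown is that every linear relation among normal forms holding in $B$ already holds in the presented algebra $C$, i.e.\ that each component map $\phi_{ij}\colon e_iCe_j\to p_iAp_j$ is injective---and neither of your routes supplies an argument for this. Your closing remark that the delicate point ``reduces to $\Hom_A(p_iA,p_jA)\cong p_jAp_i$'' only identifies the codomain; it says nothing about the kernel of $\phi_{ij}$.

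The missing idea, which is how the paper closes exactly this gap, is a pair of mutually inverse two-sided multiplication maps. With $p_i'=\upsilon_i\delta_i$ an idempotent of $C_0=e_0Ce_0\cong A$ (this isomorphism is the pre-lemma observation you correctly cite), the identities $\delta_ip_i'=\delta_i$, $p_j'\upsilon_j=\upsilon_j$ and $\delta_i\upsilon_i=e_i$ give $e_iCe_j=\delta_iC\upsilon_j$, and the map $\psi\colon p_i'Cp_j'\to e_iCe_j$, $x\mapsto\delta_i x\upsilon_j$, is a linear isomorphism: surjective by the displayed equality, and injective because $\upsilon_i(\delta_i x\upsilon_j)\delta_j=p_i'xp_j'=x$. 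Since $\phi|_{e_0Ce_0}$ is an isomorphism onto $A$ carrying $p_i'Cp_j'$ onto $p_iAp_j$, the commutativity $\phi_{ij}\circ\psi=\epsilon$ forces each $\phi_{ij}$ to be bijective. Without this (or an equivalent analysis of precisely which combinations of normal forms vanish in $C$), the proof is incomplete. Two smaller points: your appeal to Lemma~\ref{techlemma2} in the second route is misplaced---that lemma eliminates loops defined by cycles and is used later for Proposition~\ref{pres2}, not here, since relations (3)--(4) are already implicit in the path-algebra structure of $RQ'$; and with the paper's composition conventions the normal form is $d_j\,w(a)\,u_i$, not $u_j\,w(a)\,d_i$ (the latter is zero, since $w(a)$ annihilates the summand $p_iA$).
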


\begin{proof}
Since $\Hom_A(p_iA, p_jA)\cong p_jAp_i$, the algebra $B$ is generated by  $a_i$, $u_j$ and $d_j$.
We show that there is an  isomorphism from $C$ to $B$, sending $\alpha_i, \, \upsilon_j, \, \delta_j$ to 
$a_i, \, u_j, d_j$, respectively, and thus conclude that the relations are the generating relations of 
$B$. 
 
First note that the identity $1_B$ in $B$ is 
$$1_B=1_0+1_1+\dots +1_s$$ and
$$
1_0B1_0\cong \Hom_A(A, A)\cong A.
$$

Define
$$\phi: C \rightarrow B$$
with 
$\alpha_i \mapsto a_i, \,e_i\mapsto 1_i, \, \upsilon_i\mapsto u_i$ and $\delta_i\mapsto d_i$. 
By definition, $a_j, u_i$ and $d_i$ satisfy the  relations 
$\sJ'$. 
So $\phi$ is a well-defined  surjective homomorphism.
 In particular, it induces a surjective 
map $\phi_{ij}: e_i Ce_j \rightarrow p_iA p_j(\cong 1_iB1_j)$.  We will show that all the $\phi_{ij}$
are linear isomorphisms and thus conclude that $\phi$ is an isomorphism.
First, by comparing the generating relations and noting $C_0=e_0Ce_0$ , we have that the restriction of $\phi$ to $e_0Ce_0$ 
$$\phi|_{e_0Ce_0}:  e_0 C e_0\rightarrow A$$
is an isomorphism. In particular, it induces an isomorphism from $p_i'Cp_j'$ to $p_iAp_j$.

Using the relations between $\upsilon_i, \delta_i, p_i$ and $e_i$, 
we have
%As $\delta_i\upsilon_i=e_i,$
$$
e_iCe_j=\delta_i\upsilon_iC \delta_j\upsilon_j\subseteq \delta_i C \upsilon_j=e_i \delta_i C\upsilon_j e_j\subseteq e_iCe_j.
$$
Thus $$e_iCe_j=\delta_iC\upsilon_j.$$
%Similarly, $$e_iC=d_iC$$ and so 
%$$e_iCe_j=d_iC\upsilon_j.$$
%Inserting $1_i=d_i\upsilon_i$ and $1_j=d_j\upsilon_j$ produces
Similarly,
$$p_i' C p_j' \cong%\overset{\psi}{\rightarrow} 
\delta_ip_i' C p_j'\upsilon_j = \delta_iC\upsilon_j=e_iCe_j,
$$
where    the (linear) isomorphism, denoted by $\psi$, is 
given by left multiplication with $\delta_i$ and right multiplication with $\upsilon_j$.
We have the following commutative diagram
$$
\xymatrix{
p_i'Cp_j'\ar[r]^{\epsilon} \ar[d]_{\psi}& p_iAp_j\ar[d]^{=}\\
e_iCe_j \ar[r]^{\phi_{ij}}& p_iAp_j,
}
$$
where $\epsilon$ is an isomorphism induced by the isomorphism $\phi|_{e_0Ce_0}$.
Thus
 $\phi_{ij}$ is a linear isomorphism, as required.
\end{proof}
\begin{remark}
The corresponding relations of Lemma \ref{techlemma1} (3) and (4) in $C$ are implicit  in the definition of the multiplication 
in the path algebra.
\end{remark}
\section{A quiver approach to Hecke endomorphism  algebras}\label{section4}

{\it In the rest of the paper, let 
\begin{equation}\label{ring R}
R=\bigg\{\frac{f}{1+qg}\;\bigg|\; f, g\in \mathbb{Z}[q]\bigg\}=\mathbb Z[q]_P,%\bigg\{\frac fh\;\bigg|\; f, h\in \mathbb{Z}[q], h(0)=1\bigg\},
\end{equation}
be the ring obtained by localising $\mathbb{Z}[q]$ at the multiplicative set $P$ of all polynomials with the constant term 1. }

We now apply the technical lemmas to give presentations of the Hecke endomorphism  algebras over $R$. 
Then applying the functors  $-\otimes _{\mathbb{Z}[q]}\mathbb{Q}(q)$ and 
$-\otimes _{\mathbb{Z}[q]}\mathbb{Z}$ gives the presentations of the algebra over $\mathbb{Q}(q)$ and
$\mathbb{Z}$, respectively.

%%%%%%%%%%%%%%%%%%%%%%%%
%****Presentation of Hecke algebrass using the generators $1+T_i$
%%%%%%%%%%%%%%%%%%%%%%%%

%\begin{lemma}
%For any $s\in S$ and any $m\geq 1$. The following hold. 
%\begin{itemize}
%\item[(1)] $\underbrace{T_iT_j\dots}_m=\underbrace{T_jT_i\dots}_m$ if and only if 
%$\underbrace{(1+T_i)(1+T_j)\dots}_m=\underbrace{(1+T_j)(1+T_i)\dots}_m$.
%\item[(2)] $T_i^2=(q-1)T_i+q$ if and only if $(T_i+1)^2=(q+1)(T_i+1)$.
%\end{itemize}
%\end{lemma}

%\begin{proof} By computation, (2) is true and 
%$$
%\underbrace{(1+T_i)(1+T_j)(1+T_i)\dots}_m=1+\sum_{0<l<m}(\underbrace{T_iT_j\dots}_l+\underbrace{T_jT_i\dots}_l) %+\underbrace{T_iT_j\dots}_m.
%$$
%So (1) follows.
%\end{proof}

%\begin{corollary}\label{newrel}
%The Hecke algebra $H_q$ is generated by $1+T_1, \dots, 1+T_n$ subject to the relations.
%\begin{itemize}
%\item[(1)] 
%$\underbrace{(1+T_s)(1+T_t)\dots}_{m_{st}}=\underbrace{(1+T_t)(1+T_s)\dots}_{m_{st}}$.
%\item[(2)]  $(T_i+1)^2=(q+1)(T_i+1)$.
%\end{itemize}
%\end{corollary}
%We call the relations in Corollary \ref{newrel} new Hecke algebra relations and 
%denote by $\sI$ the set of the  relations in \ref{newrel}.

%\subsection{$q$-Schur algebras over $R$}
By extending the ground ring to $R$, we write 
$$RH_q=R\otimes_{\mathbb{Z}[q]}H_q.$$
The elements in $RH_q$ are $R$-linear combinations 
$\sum_{w\in W}r_wT_w$, where $r_w\in R$.
Recall that  the Poincar$\acute{e}$ polynomial associated to $W_I$  is $$\pi(I)=\sum_{w\in W_I}q^{\ell(w)}.$$
Note that the elements $x_I$ defined in \eqref{x_I} satisfies $x_I^2=\pi(I)x_I$. Since $\frac1{\pi(I)}\in R$, the elements
$$p_I=\frac{x_I}{\pi(I)}$$
are idempotents in $RH_q$ and $p_IRH_q=x_IRH_q$.

%We will regard $p_I$ as a polynomial in $T_s$. 
{\it In the sequel, the notation $p_I|_{\{t_s\}_{s\in S}}$ denotes the element in an $R$-algebra $A$ obtained by replacing $T_s$ in $p_I$ with $t_s\in A$ for all $s\in S$.}
%\begin{lemma}The following are true.
%\begin{itemize}\item[(1)] The element $p_I$ is an idempotent in $RH_q$.
%\item[(2)] $p_IRH_q=x_IRH_q$.\end{itemize}\end{lemma}
%\begin{proof}
%As $x_I^2=\pi(I)x_I$, (1) follows. (2) is true, since ${\pi(I)}$ is invertible in $R$. 
%\end{proof}

We now apply Lemma \ref{techlemma1} to the Hecke algebra $A=RH_q$, the  
generators $T_s,\;s\in S$, the set $\sI$ of Hecke  relations  (cf. \cite{JS}), and the idempotents
$p_I,\; I \in \Lambda$, 
with $p_0=p_{\emptyset}=1$, and to give the first 
presentation of the Hecke endomorphism  algebras $B=RE_q$ over $R$, where
$$RE_q=R\otimes _{\mathbb{Z}[q]}E_q.$$

Let $(Q', \sJ')$ be the quiver associated with $RH_q(W)$ as constructed in the proof of Lemma \ref{techlemma1}. Thus, the vertices are $e_I$ with $I\in \Lambda$, 
arrows are $\alpha_s$, $\upsilon_I$, $\delta_I$ with $s\in S$ and $I\in \Lambda^*$,
and $\sJ'$ consists of
\begin{itemize}
\item[(J$'$1)] Hecke relations on $\alpha_s,s\in S$;
\item[(J$'$2)] $\delta_I\upsilon_I=e_I$ for all $I\in\Lambda^*$;
\item[(J$'$3)] $\upsilon_I\delta_I=p_I':=p_I|_{\{\alpha_s\}_{s\in S}}$ for all $I\in\Lambda^*$.%, where $p_I|_{\{\alpha_s\}_{s\in S}}$ is the element in $RQ$ obtained by changing every $T_s$ to $\alpha_s$.
\end{itemize}

In the case $W=S_4$, the symmetric group on 4 letters, and $S=\{s_i\mid i=1,2,3\}$ with $s_i=(i,i+1)$. The quiver $Q'$, where $s_i$ is identified with $i$, has the form\vspace{-2ex}
$$ 
\xymatrix{\\Q': &&&\emptyset \ar@(l, ul)[]  \ar@(u,  ur)[] \ar@(r, ur)[]
\ar@/^/[ddlll] \ar@/^/[ddll] \ar@/^/[ddl] \ar@/^/[dd] \ar@/_/[ddr] \ar@/_/[ddrr]\ar@/_/[ddrrr]
\\\\
\{1\} \ar@/^/[uurrr]& \{2\} \ar@/^/[uurr]&\{3\} \ar@/^/[uur]&\{1, 2\}\ar@/^/[uu] 
&\{1, 3\}\ar@/_/[uul] &\{2, 3\}\ar@/_/[uull]  &\{1, 2, 3\} \ar@/_/[uulll] 
}$$

\vspace{.3cm}
Note that, in $RH_q$, $x_{\{s\}}=x_s=1+T_s$ and  $x_I x_J=\pi(I)x_J$ for $I\subseteq J$.
\begin{lemma}\label{cor} 
We have the following relations
in $C=RQ'/\langle\sJ'\rangle$.
\begin{itemize}
\item[(1)]  $\alpha_{\{s\}}=(1+q)\upsilon_{\{s\}}\delta_{\{s\}}-e_\emptyset$ %=(1+q)\upsilon_{\{s\}}\delta_{\{s\}}-1$ 
%and $\alpha_{\{s\}}^2=(q-1)\alpha_{\{s\}}+qe_\emptyset$ 
for all $s\in S$.
%\item[(2)] $x_{\{i\}}=(q+1)x_{\{i\}}$.
\item[(2)] For any $I$ and $J$ in $\Lambda^*$  with  $I\subset J$ , %$p_I$ and $p_J$ are polynomials in $x_{i}$ with 
$p_I'=\upsilon_I\delta_I$ and $p_I'p_J'=p_J'=p_J'p_I'$.%, where $p_K'=p_K|_{\{\alpha_s\}_{s\in S}}$. 
\end{itemize}
\end{lemma}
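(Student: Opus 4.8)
The plan is to verify both identities by computing directly in the algebra $C=RQ'/\langle\sJ'\rangle$, using only the defining relations (J$'$1)--(J$'$3) together with the isomorphism $C_0\cong RH_q$ established in the proof of Lemma \ref{techlemma1}. The key observation is that $C_0=e_\emptyset Ce_\emptyset$ is generated by the $\alpha_s$ subject to the Hecke relations, so any polynomial identity that holds among the $T_s$ in $RH_q$ holds verbatim among the $\alpha_s$ in $C_0$; in particular $p_I'=p_I|_{\{\alpha_s\}}$ behaves exactly like $p_I$ does in $RH_q$.

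For part (1), I would start from the definition $x_{\{s\}}=1+T_s$, which gives $T_s=x_{\{s\}}-1$, and from $\pi(\{s\})=1+q$, so that the idempotent is $p_{\{s\}}=x_{\{s\}}/(1+q)$ and hence $x_{\{s\}}=(1+q)p_{\{s\}}$. Transporting this to $C$ via the isomorphism $C_0\cong RH_q$, the element $\alpha_s$ corresponds to $T_s$, the element $e_\emptyset$ to the identity $1$, and $p_{\{s\}}'$ to $p_{\{s\}}$. Relation (J$'$3) with $I=\{s\}$ says $\upsilon_{\{s\}}\delta_{\{s\}}=p_{\{s\}}'$, so $(1+q)\upsilon_{\{s\}}\delta_{\{s\}}-e_\emptyset$ corresponds precisely to $(1+q)p_{\{s\}}-1=x_{\{s\}}-1=T_s$, which is $\alpha_s$. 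This yields (1).

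For part (2), the first equality $p_I'=\upsilon_I\delta_I$ is simply relation (J$'$3), so nothing new is needed there. For the sandwich equalities, I would use the relation $x_Ix_J=\pi(I)x_J$ for $I\subseteq J$ quoted just before the lemma, together with its transpose $x_Jx_I=\pi(I)x_J$ (obtained by applying the anti-automorphism $\iota$, or by the symmetric double-coset argument, noting $x_I$ and $x_J$ are $\iota$-fixed). Dividing by the scalar $\pi(I)\pi(J)\in R^\times$ converts these into the idempotent identities $p_Ip_J=p_J=p_Jp_I$ in $RH_q$. Transporting through $C_0\cong RH_q$ gives $p_I'p_J'=p_J'=p_J'p_I'$ in $C_0\subseteq C$, which is exactly the claim.

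The only point requiring care, and the step I expect to be the main (minor) obstacle, is justifying that identities valid in $RH_q$ may be read off verbatim for the primed elements inside $C$: one must invoke that $C_0$ is the \emph{image} of $RH_q$ under an isomorphism and that $p_I',p_J'$ genuinely lie in $C_0$ (which holds since each is a noncommutative polynomial in the $\alpha_s$ with no $\upsilon,\delta$ involved), rather than attempting to manipulate $\upsilon_I,\delta_I$ directly. Once that identification is in place, both parts reduce to the elementary scalar computations above.
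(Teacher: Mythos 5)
Your proposal is correct and follows essentially the same route as the paper: part (1) is read off from (J$'$3) with $I=\{s\}$, and part (2) reduces, via the identification of the subalgebra generated by the $\alpha_s$ with $RH_q$, to the identity $x_Ix_J=\pi(I)x_J=x_Jx_I$ and the quasi-idempotent property of the $x_I$. The paper's proof is just a terser statement of exactly these observations, so no further comparison is needed.
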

\begin{proof} (1) follows from (J$'$3) by taking $I=\{s\}$. 
%$$\aligned
%\alpha_{\{s\}}^2&=(1+q)^2(\upsilon_{\{s\}}\delta_{\{s\}})^2-2(1+q)\upsilon_{\{s\}}\delta_{\{s\}}+e_\emptyset\\
%&\overset{\text{(J$'$2)}}=(1+2q+q^2)\upsilon_{\{s\}}e_{\{s\}}\delta_{\{s\}}-2(1+q)\upsilon_{\{s\}}\delta_{\{s\}}+e_\emptyset\\
%&=(q^2-1)\upsilon_{\{s\}}\delta_{\{s\}}+e_\emptyset\\
%&=q(q+1)\upsilon_{\{s\}}\delta_{\{s\}}-(q+1)\upsilon_{\{s\}}\delta_{\{s\}}+e_\emptyset\\
%&\overset{\text{(J$'$3)}}=q(\alpha_{\{s\}}+e_\emptyset)-(\alpha_{\{s\}}+e_\emptyset)+e_\emptyset=\text{RHS},
%\endaligned
%$$
 (2) is true, by the defintion of $p'_I, p'_j$,  the facts that $x_Ix_J=\pi(I)x_J=x_Jx_I$ and $\alpha_s$ $(s\in S)$ satisfy the Hecke relations. 
\end{proof}

We can now present the Hecke endomorphism  algebra $RE_q$ via the quiver $Q'$.

\begin{proposition}\label{pres1}%Maintain the notation above.
The Hecke endomorphism  algebra  $RE_q(W)$ associated with a Coxeter system $(W,S)$ is isomorphic to the algebra
$RQ'/\langle\sJ'\rangle$.
%$$RS_q(W)\cong RQ'/\langle\sJ'\rangle.$$
 More precisely, $RE_q(W)$ is generated by 
$$T_s, 1_I, u_J, d_J\quad(s\in S, \;I\in\Lambda, \;J\in \Lambda^*),$$ subject to  the relations 
\begin{itemize}
\item[(1)] Hecke  relations on $T_s$, $s\in S$;
\item[(2)] $d_I u_I=1_I$ and $u_Id_I=p_I$, $I\in \Lambda^*$;
\item[(3)] $u_I 1_I=u_I=1_\emptyset u_I$ and $d_I 1_\emptyset=d_I=1_Id_I$, $I\in \Lambda^*$;
\item[(4)] $1_I1_J=0$ for $I\not= J$ in $\Lambda$ and $1_I^2=1_I$ for all $I\in \Lambda$.
\end{itemize}
\end{proposition}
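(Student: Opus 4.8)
The plan is to derive Proposition \ref{pres1} as a direct application of Lemma \ref{techlemma1} to the specific data $A = RH_q$. First I would check that the hypotheses of Lemma \ref{techlemma1} are met. Here the generators $a_1,\dots,a_n$ are the Hecke generators $T_s$ ($s\in S$), the relation set $\sI$ is the set of Hecke relations (H1) and (H2), and the idempotents $p_0=1,p_1,\dots,p_s$ are the elements $p_I=x_I/\pi(I)$ for $I\in\Lambda$, indexed by finitary subsets. The crucial point making this legitimate is that $\pi(I)$ is invertible in $R$: since $\pi(I)$ is a polynomial with constant term $1$ (the contribution of the identity $w=1\in W_I$), it lies in the multiplicative set $P$, so $\tfrac{1}{\pi(I)}\in R$ and the $p_I$ really are idempotents in $A=RH_q$, with $p_\emptyset=1$. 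This verification, together with the observation $p_IRH_q=x_IRH_q$, reconciles the definition $B=\End_A(\bigoplus_{I}p_IA)$ with $RE_q=R\otimes_{\mathbb Z[q]}\End_{H_q}(\bigoplus_I x_IH_q)$.

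Once the dictionary is set, Lemma \ref{techlemma1} applies verbatim and delivers exactly the presentation stated, with the abstract symbols $a_i,u_j,d_j,1_i$ renamed as $T_s, u_J, d_J, 1_I$. The four families of relations in Lemma \ref{techlemma1} translate term by term into relations (1)--(4) of the proposition: the Hecke relations on the $T_s$, the pair $d_Iu_I=1_I$, $u_Id_I=p_I$, the "sandwich/compatibility" relations $u_I1_I=u_I=1_\emptyset u_I$ and $d_I1_\emptyset=d_I=1_Id_I$, and finally the orthogonality and idempotency of the $1_I$. I would present this translation explicitly, matching index $0$ with $I=\emptyset$ so that $1_\emptyset$ plays the role of $1_0$. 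The isomorphism $RE_q(W)\cong RQ'/\langle\sJ'\rangle$ is then just the statement $B\cong C$ already proved in Lemma \ref{techlemma1}, so the bulk of the work is bookkeeping rather than new argument.

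The one genuine verification is that the localisation is compatible with the $\End$ construction, i.e.\ that $R\otimes_{\mathbb Z[q]}\End_{H_q}(\bigoplus_I x_IH_q)$ really equals $\End_{RH_q}(\bigoplus_I p_IRH_q)$; I would note that each $x_IH_q$ is a finitely generated projective (indeed free, as a direct summand cut out by an idempotent after localisation) $H_q$-module, so $\End$ commutes with the flat base change $-\otimes_{\mathbb Z[q]}R$, and the images of the $x_I$ become the principal right ideals $p_IRH_q$. This is where the hypothesis that $R$ is a localisation is used in an essential way rather than formally. The final sentence of the proof would simply invoke Lemma \ref{techlemma1} and record the renaming.

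I expect the main obstacle to be purely notational: making sure the index $\emptyset\in\Lambda$ corresponds to the distinguished vertex $0$ of Lemma \ref{techlemma1} (so $p_\emptyset=p_0=1$ and $x_\emptyset=1$), and that the arrows $\upsilon_I,\delta_I$ of $Q'$ match the embeddings $u_I$ and surjections $d_I$. There is no substantive analytic or combinatorial difficulty here; the content was front-loaded into Lemma \ref{techlemma1}, and the proof of the proposition is essentially a verification that $RH_q$ together with its idempotents $p_I$ is an instance of the abstract setup.
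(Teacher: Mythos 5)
Your proposal is correct and takes essentially the same approach as the paper: the paper's proof of Proposition \ref{pres1} is precisely the application of Lemma \ref{techlemma1} to $A=RH_q$ with generators $T_s$, the Hecke relations, and the idempotents $p_I=x_I/\pi(I)$ (with $p_\emptyset=1$), via the isomorphism $\alpha_s\mapsto T_s$, $e_I\mapsto 1_I$, $\upsilon_I\mapsto u_I$, $\delta_I\mapsto d_I$. One correction to your supplementary base-change verification (a point the paper leaves implicit): $x_IH_q$ is \emph{not} projective over $H_q$ before localisation (it becomes a direct summand only over $RH_q$), so the right justification is that $x_IH_q$ is finitely presented --- the kernel of $h\mapsto x_Ih$ is the right ideal generated by $T_s-q$, $s\in I$ --- and $\Hom$ from a finitely presented module commutes with the flat localisation $\mathbb Z[q]\to R$.
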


\begin{proof}
It follows from Lemma \ref{techlemma1} with the isomorphism 
$$\phi: RQ'/\sJ'\rightarrow RE_q(W)$$
given by $\alpha_s\mapsto T_s$, $e_I\mapsto 1_I$, $\upsilon_I\mapsto u_I$ and $\delta_I\mapsto d_I$.
\end{proof}

%\begin{proof}
%It follows from Theorem \ref{pres1} and the fact that
%$x_I x_J=\pi(I)x_J.$
%\end{proof}

Let $Q$ be the quiver obtained from $Q'$ by removing the loops $\alpha_s, s\in S$, and let 
\begin{equation}\label{tau_s}
\tau_s=(1+q)\upsilon_{\{s\}}\delta_{\{s\}}-e_\emptyset\text{ for all }s\in S.
\end{equation}
Then $\tau_s\in RQ$ for all $s\in S$. 
%For $I\in\La^*$, let $p_I|_{\{\tau_s\}_{s\in S}}$ be the element in $RQ$ obtained by changing every $T_s$ to $\tau_s$ and define
Define 
\begin{equation}\label{sJ}
\sJ=\{\mbox{Braid  relations on }\tau_s, s\in S\}\cup\{\upsilon_I\delta_I=p_I|_{\{\tau_s\}_{s\in S}}, \delta_I\upsilon_I=e_I\mid I\in \Lambda^*\}.
\end{equation}

\begin{proposition}\label{pres2}The inclusion of $Q$ into $Q'$ induces an isomorphism of $R$-algebras 
 $$RQ/\langle\sJ\rangle \rightarrow RQ'/\langle\sJ'\rangle.$$ 
In particular, the Hecke endomorphism  algebra $RE_q(W)$ is isomorphic to the path algebra $RQ$ modulo the relations in $\sJ$.
%$$\upsilon_I\mapsto \upsilon_I, d_I\mapsto d_I$$
 %is an isomorphism. 
\end{proposition}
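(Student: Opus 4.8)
The plan is to realise the passage from $Q$ to $Q'$ as a sequence of loop-additions governed by Lemma \ref{techlemma2}, and then to verify that the two relation sets cut out the same ideal in $RQ'$. First I would rebuild $Q'$ from $Q$ one loop at a time. Each $\tau_s=(1+q)\upsilon_{\{s\}}\delta_{\{s\}}-e_\emptyset$ lies in $e_\emptyset(RQ)e_\emptyset$, so it is a cycle through the vertex $\emptyset$ in $Q$, and it remains one after any of the other loops have been adjoined since $\tau_s$ never involves the $\alpha_t$. Thus Lemma \ref{techlemma2} applies once for each $s\in S$, adjoining the loop $\alpha_s$ at $\emptyset$ together with the relation $\alpha_s-\tau_s$. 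Writing $\sJ''=\sJ\cup\{\alpha_s-\tau_s\mid s\in S\}$, the iterated lemma produces an isomorphism
\begin{equation*}
RQ/\langle\sJ\rangle \;\cong\; RQ'/\langle\sJ''\rangle
\end{equation*}
induced by the inclusion $Q\hookrightarrow Q'$.

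It then remains to prove the ideal equality $\langle\sJ''\rangle=\langle\sJ'\rangle$ in $RQ'$, after which the statement follows by composing with the isomorphism of Proposition \ref{pres1}. That both ideals contain every $\alpha_s-\tau_s$ is immediate: it is built into $\sJ''$, and for $\sJ'$ it is exactly Lemma \ref{cor}(1). Working modulo these relations, so that $\alpha_s$ and $\tau_s$ may be substituted for one another, the section relations $\delta_I\upsilon_I=e_I$ are literally common to both sets; the relations $\upsilon_I\delta_I=p_I|_{\{\alpha_s\}_{s\in S}}$ of (J$'$3) match the relations $\upsilon_I\delta_I=p_I|_{\{\tau_s\}_{s\in S}}$ of $\sJ$ under the substitution; and the braid relations on $\alpha_s$ coincide with the braid relations on $\tau_s$. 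This makes each generator of one ideal lie in the other, apart from the single point addressed below.

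The step I expect to be the main obstacle is showing that the quadratic Hecke relation on $\alpha_s$, which is part of (J$'$1) but is \emph{absent} from $\sJ$ (where only braid relations on $\tau_s$ are imposed), is nonetheless recovered modulo $\sJ''$. Here I would compute directly: from $\delta_{\{s\}}\upsilon_{\{s\}}=e_{\{s\}}$ one gets that $\upsilon_{\{s\}}\delta_{\{s\}}$ is idempotent, whence $\tau_s=(1+q)\upsilon_{\{s\}}\delta_{\{s\}}-e_\emptyset$ satisfies $\tau_s^2=(q-1)\tau_s+q\,e_\emptyset$; that is, relation (H1) on $\tau_s$ is forced by the section relation together with the defining shape of $\tau_s$. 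Substituting $\alpha_s=\tau_s$ then yields (H1) on $\alpha_s$. This is precisely why $\sJ$ can afford to drop the quadratic relations while $\sJ'$ must list the full Hecke relations, and verifying it closes the reverse containment $\langle\sJ'\rangle\subseteq\langle\sJ''\rangle$. Combining the two containments with the isomorphism above and with Proposition \ref{pres1} gives $RE_q(W)\cong RQ/\langle\sJ\rangle$, as claimed.
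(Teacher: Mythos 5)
Your proposal is correct and follows essentially the same route as the paper: the paper's proof consists precisely of the computation $\tau_s^2=(q-1)\tau_s+qe_\emptyset$ (using $\delta_{\{s\}}\upsilon_{\{s\}}=e_{\{s\}}$) followed by an appeal to Lemma \ref{techlemma2}. You merely make explicit the bookkeeping the paper leaves implicit --- the loop-by-loop application of Lemma \ref{techlemma2} and the ideal equality $\langle\sJ\cup\{\alpha_s-\tau_s\}_{s\in S}\rangle=\langle\sJ'\rangle$ via Lemma \ref{cor}(1) --- which is a faithful elaboration rather than a different argument.
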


\begin{proof}
 We show that $\tau_s$ satisfies the quadratic Hecke relations.  
$$\aligned
\tau_{s}^2&=(1+q)^2(\upsilon_{\{s\}}\delta_{\{s\}})^2-2(1+q)\upsilon_{\{s\}}\delta_{\{s\}}+e_\emptyset\\
&=(1+2q+q^2)\upsilon_{\{s\}}e_{\{s\}}\delta_{\{s\}}-2(1+q)\upsilon_{\{s\}}\delta_{\{s\}}+e_\emptyset\\
&=(q^2-1)\upsilon_{\{s\}}\delta_{\{s\}}+e_\emptyset\\
&=q(q+1)\upsilon_{\{s\}}\delta_{\{s\}}-(q+1)\upsilon_{\{s\}}\delta_{\{s\}}+e_\emptyset\\
%&=q(\alpha_{\{s\}}+e_\emptyset)-(\alpha_{\{s\}}+e_\emptyset)+e_\emptyset
&=(q-1)\tau_s +qe_\emptyset,
\endaligned$$
Now the proposition follows from Lemma \ref{techlemma2}.
\end{proof}

%The composition of the two isomorphisms in Theorem \ref{pres1} and Proposition \ref{pres2} gives 
%a new presentation of $S_0(W)\otimes_{\mathbb{Z}}\mathbb{Q}$.
We now use the presentation above for $RE_q$ to derive a presentation in terms of  modified Williamson's generators.

For any $I\subset J$ in $\Lambda$, define
$$\begin{cases}
\upsilon_{\emptyset, J}=\upsilon_J,\;\;\; \delta_{J, \emptyset}=\delta_J, &\text{ if }I=\emptyset;\\
\upsilon_{I,J}=\delta_I\upsilon_J,\;\;\; \delta_{J,I}=\delta_J\upsilon_I, &\text{ if }I\neq\emptyset.\\
\end{cases}
$$
%Note that for $I\not=\emptyset$, $\upsilon_{\emptyset, I}=\upsilon_I$ and $\delta_{\emptyset, I}=\delta_I$.

\begin{lemma}\label{newgenerators}
Suppose that $I\subset J\subset K$ in $\Lambda$. Then we have the following in $RQ/\langle\sJ\rangle$.
\begin{itemize}\item[(1)]
$\upsilon_{I, K}=\upsilon_{I, J}\upsilon_{J, K} \;\;\mbox{ and }\;\; \delta_{K, I}=\delta_{K, J}\delta_{J, I}.
$
In particular, $$\upsilon_K=\upsilon_J\upsilon_{J, K} \mbox{ and } \delta_{K}=\delta_{K, J}\delta_{J}.$$ 

\item[(2)] $\delta_{J, I}\upsilon_{I, J}=e_{J}$ and thus $\delta_{J, I}\upsilon_{I, J}$ is an idempotent.
\item[(3)] $\upsilon_{I, J}\delta_{J, I}=\delta_I p_J \upsilon_I=p_Je_I$, where $p_J=p_J|_{\{\tau_s\}_{s\in S}}$.
\end{itemize}
\end{lemma}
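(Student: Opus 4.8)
The plan is to verify each of the three identities by unfolding the definitions of $\upsilon_{I,J}$ and $\delta_{J,I}$ and then repeatedly applying the defining relations in $\sJ$, namely $\delta_K\upsilon_K=e_K$ and $\upsilon_K\delta_K=p_K$ for $K\in\Lambda^*$, together with the idempotent/orthogonality facts recorded in Lemma \ref{cor}(2) (in particular $p_Ip_J=p_J=p_Jp_I$ whenever $I\subset J$). Throughout I would treat the cases $I=\emptyset$ and $I\neq\emptyset$ in parallel, since the definitions bifurcate there; the empty case typically reduces to the bare relations $\delta_K\upsilon_K=e_K$ and $\upsilon_K\delta_K=p_K$ already in $\sJ$.

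For part (1), take $I\neq\emptyset$ (the case $I=\emptyset$ being the displayed special case). I would compute
$$
\upsilon_{I,J}\upsilon_{J,K}=(\delta_I\upsilon_J)(\delta_J\upsilon_K)=\delta_I(\upsilon_J\delta_J)\upsilon_K=\delta_I\,p_J\,\upsilon_K .
$$
The key point is to absorb the middle factor $p_J=\upsilon_J\delta_J$: since $I\subset J\subset K$, the idempotent $p_J$ acts as identity against the structure coming from $\upsilon_K$ (using $p_Jp_K=p_K$ and $e_K=\delta_K\upsilon_K$), so that $\delta_I p_J\upsilon_K=\delta_I\upsilon_K=\upsilon_{I,K}$. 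The companion identity for $\delta_{K,I}=\delta_{K,J}\delta_{J,I}$ follows symmetrically by the same absorption of $p_J$, and the ``in particular'' statements are the instances $I=\emptyset$.

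For part (2) I would substitute and collapse:
$$
\delta_{J,I}\upsilon_{I,J}=(\delta_J\upsilon_I)(\delta_I\upsilon_J)=\delta_J(\upsilon_I\delta_I)\upsilon_J=\delta_J\,p_I\,\upsilon_J .
$$
Since $I\subset J$ gives $p_Ip_J=p_J$, and $\upsilon_J=\upsilon_J e_J$ with $\delta_J p_J=\delta_J$ (as $\upsilon_J\delta_J=p_J$ forces $\delta_J\upsilon_J\delta_J=\delta_J$), the factor $p_I$ is absorbed and one is left with $\delta_J\upsilon_J=e_J$. That $e_J$ is an idempotent is immediate. For part (3), the analogous computation gives
$$
\upsilon_{I,J}\delta_{J,I}=(\delta_I\upsilon_J)(\delta_J\upsilon_I)=\delta_I(\upsilon_J\delta_J)\upsilon_I=\delta_I\,p_J\,\upsilon_I,
$$
which is exactly the first claimed expression $\delta_I p_J\upsilon_I$; rewriting $p_J=p_J e_\emptyset$ and pushing $\delta_I,\upsilon_I$ through using relation \eqref{sJ} and Lemma \ref{cor}(2) yields the stated $p_Je_I$.

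The main obstacle I anticipate is purely bookkeeping rather than conceptual: one must be careful about which idempotent ($e_K$, $p_K$, or $p_K e_I$) lives at which vertex, and ensure that each product is nonzero by matching source and target vertices in the quiver $Q$ before simplifying. In particular the step ``$\delta_I p_J\upsilon_K=\delta_I\upsilon_K$'' in part (1) must be justified carefully, since it is really the assertion that $p_J$ acts trivially between the relevant cosets; I would reduce this to the Hecke-algebra identity $x_Ix_J=\pi(I)x_J$ (equivalently $p_Ip_J=p_J$) via Lemma \ref{cor}(2) rather than attempting a direct path-algebra manipulation. Once the vertex-matching is handled, every identity is a two- or three-step collapse using only the relations in $\sJ$.
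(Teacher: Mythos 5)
Your proposal is correct and follows essentially the same route as the paper: unfold the definitions $\upsilon_{I,J}=\delta_I\upsilon_J$, $\delta_{J,I}=\delta_J\upsilon_I$, collapse the middle via $\upsilon_J\delta_J=p_J$, and absorb the resulting idempotent using $p_Ip_J=p_J=p_Jp_I$ from Lemma \ref{cor}(2) together with $\delta_K\upsilon_K=e_K$ (the paper organizes this absorption by inserting $e_I=\delta_I\upsilon_I$ and $e_K=\delta_K\upsilon_K$ at both ends to get $\delta_Ip_Ip_Jp_K\upsilon_K$, which is the same computation grouped differently). Your key step $\delta_Ip_J\upsilon_K=\delta_I\upsilon_K$, justified by $\upsilon_K=p_K\upsilon_K$ and $p_Jp_K=p_K$, is exactly what the paper's insertion trick achieves, so there is no gap.
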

\begin{proof} (1)
By defintion, 
$\upsilon_{I, J}\upsilon_{J, K}=\delta_I\upsilon_J\delta_J\upsilon_K$.
Following the relations $\delta_I\upsilon_I=e_I$ and $\upsilon_I\delta_I=p_I$ for any $I\in\La^*$ in \eqref{sJ} and Lemma \ref{cor}, 
$$
\upsilon_{I, J}\upsilon_{J, K}=\delta_I\upsilon_I(\delta_I\upsilon_J\delta_J\upsilon_K) \delta_K\upsilon_K=\delta_I p_Ip_Jp_K\upsilon_K=\delta_Ip_K\upsilon_K=\delta_I\upsilon_K\delta_K\upsilon_K=\delta_I\upsilon_K,%=\upsilon_{I, K}.
$$ which is $\upsilon_{I, K}$.
So the first identity follows. Similarly, the second one holds.

(2) Also, %Again using the identities $d_I\upsilon_I=e_I$ for any $I$ and Lemma \ref{cor},  we have 
$$
\delta_{J, I}\upsilon_{I, J}=e_J\delta_{J, I}\upsilon_{I, J}e_J
=\delta_{J}\upsilon_J (\delta_{J}\upsilon_{I}\delta_{I}\upsilon_{J}) \delta_J\upsilon_J
=\delta_Jp_Jp_Ip_J\upsilon_J=\delta_Jp_J\upsilon_J=\delta_J\upsilon_J\delta_J\upsilon_J,%=e_J,
$$ which is $e_J^2=e_J$, as required.

(3) It follows from (1) and Lemma \ref{cor}.
\end{proof}
Lemma \ref{newgenerators} gives the following sandwich relations.
\begin{corollary}
If $I\subset J, J'\subset K$, then  we have 
$$\upsilon_{I, J}\upsilon_{J, K}=\upsilon_{I, J'}\upsilon_{J', K},\;\; \;\delta_{K,J}\delta_{J, I}=\delta_{K,J'}\delta_{J', I}.$$
\end{corollary}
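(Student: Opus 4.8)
The plan is to reduce both identities directly to the factorization established in Lemma \ref{newgenerators}(1). That lemma asserts that along any chain $I\subset J\subset K$ in $\Lambda$ one has the splittings $\upsilon_{I,K}=\upsilon_{I,J}\upsilon_{J,K}$ and $\delta_{K,I}=\delta_{K,J}\delta_{J,I}$. The key observation is that the right-hand side of each splitting—the ``direct'' arrow $\upsilon_{I,K}$ (respectively $\delta_{K,I}$)—depends only on the endpoints $I$ and $K$ and makes no reference whatsoever to the intermediate vertex. Hence any two factored paths from $I$ to $K$ through different intermediates must collapse to the same element.

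Concretely, I would first apply Lemma \ref{newgenerators}(1) to the chain $I\subset J\subset K$ to obtain $\upsilon_{I,J}\upsilon_{J,K}=\upsilon_{I,K}$, and then apply it a second time to the chain $I\subset J'\subset K$ to obtain $\upsilon_{I,J'}\upsilon_{J',K}=\upsilon_{I,K}$. Comparing the two right-hand sides yields $\upsilon_{I,J}\upsilon_{J,K}=\upsilon_{I,J'}\upsilon_{J',K}$ at once. The statement for the $\delta$'s follows symmetrically from the second half of Lemma \ref{newgenerators}(1), via $\delta_{K,J}\delta_{J,I}=\delta_{K,I}=\delta_{K,J'}\delta_{J',I}$.

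Since the whole argument amounts to a single invocation of the already-proven splitting along both intermediate subsets, there is essentially no obstacle remaining; the substantive computation was carried out in Lemma \ref{newgenerators}(1). The only point worth verifying is that the hypotheses of that lemma are actually met, namely that $I\subset J\subset K$ and $I\subset J'\subset K$ are genuine chains in $\Lambda$, which is precisely what the assumption $I\subset J, J'\subset K$ supplies.
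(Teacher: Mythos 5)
Your proof is correct and is exactly the paper's intended argument: the corollary is stated as an immediate consequence of Lemma \ref{newgenerators}(1), with both factored products collapsing to the endpoint-only elements $\upsilon_{I,K}$ and $\delta_{K,I}$, which is precisely your reduction.
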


We now have the following quiver presentation for the Hecke endomorphism algebra $RE_q(W)\cong RQ/\langle\sJ\rangle$. Recall the relation $I\sqsubset J$ defined in \eqref{sqsubset}.

\begin{corollary}\label{newgenerators2}
The algebra $RQ/\langle\sJ\rangle$ is generated by $e_K$, $\upsilon_{I, J}$ and $\delta_{J, I}$ 
for $I, J, K\in \Lambda$ with $I\sqsubset J$, 
and the ideal $\langle \sJ\rangle $ is generated by the following relations. 
%where $I\subsetneq I\cup \{s\}=J\subsetneq J\cup \{t\}=K$ in $\Lambda$.
\begin{itemize}
\item[(J1)] Braid relations on $\{\tau_s \mid s\in S\}$;
\item[(J2)] $\delta_{J, I}\upsilon_{I, J}=e_{J}$ for all $I, J\in \Lambda$ with $I\sqsubset J$;
\item[(J3)] $\upsilon_{\emptyset, I_{1}}\dots \upsilon_{I_{m-1}, I}\delta_{I, I_{m-1}}\dots \delta_{I_2, I_1}\delta_{I_1, \emptyset}=
p_{I}|_{\{\tau_s\}_{s\in S}}$ for all $I\in \La^*$, and sequences
$$\emptyset=I_0\sqsubset I_1\sqsubset I_2\sqsubset\dots \sqsubset I_m=I.$$
\item[(J4)] $\upsilon_{I, J}\upsilon_{J, K}=\upsilon_{I, J'}\upsilon_{J', K}$ and $\delta_{K, J}\delta_{J, I}=\delta_{K, J'}\delta_{J', I}$ for all $I,J,J',K\in\Lambda$ with
$I\sqsubset J\sqsubset K$ and $I\sqsubset J'\sqsubset K.$
%is a sequence of finitary subsets $I_i$ of $S$.
\end{itemize} 
\end{corollary}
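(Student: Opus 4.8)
## Proof Proposal for Corollary \ref{newgenerators2}

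The plan is to leverage the presentation already established in Proposition \ref{pres2}, namely $RE_q(W)\cong RQ/\langle\sJ\rangle$, and merely rewrite the generating set and the relations in terms of the new symbols $\upsilon_{I,J}$ and $\delta_{J,I}$. The essential point is that passing from the arrows $\{\upsilon_I,\delta_I\mid I\in\Lambda^*\}$ of $Q$ to the enlarged collection $\{\upsilon_{I,J},\delta_{J,I}\mid I\sqsubset J\}$ changes neither the algebra nor the ideal; it only reorganises the bookkeeping. So the whole argument is one of translation, justified by Lemma \ref{newgenerators} and its corollary.

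First I would establish that $e_K$, $\upsilon_{I,J}$, $\delta_{J,I}$ (with $I\sqsubset J$) generate the algebra. By definition $\upsilon_{\emptyset,I}=\upsilon_I$ and $\delta_{I,\emptyset}=\delta_I$, so the original arrows incident to the vertex $\emptyset$ are recovered directly. For a general arrow $\upsilon_J$ of $Q$ with $|J|=m\geq 2$, I would use Lemma \ref{newgenerators}(1): choosing any chain $\emptyset=I_0\sqsubset I_1\sqsubset\dots\sqsubset I_m=J$ gives
$$\upsilon_J=\upsilon_{\emptyset,I_1}\upsilon_{I_1,I_2}\cdots\upsilon_{I_{m-1},J},$$
and symmetrically for $\delta_J$. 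Since the $\tau_s$ are polynomials in the $\upsilon_{\{s\}}$ and $\delta_{\{s\}}$ by \eqref{tau_s}, every original generator of $RQ/\langle\sJ\rangle$ lies in the subalgebra generated by the new symbols; hence the new symbols generate.

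Next I would verify that relations (J1)--(J4) hold and that they generate $\langle\sJ\rangle$. Relations (J1) are the braid relations already in $\sJ$; (J2) is exactly Lemma \ref{newgenerators}(2); (J3) follows by combining Lemma \ref{newgenerators}(1) with the original relation $\upsilon_I\delta_I=p_I|_{\{\tau_s\}}$, expanding both $\upsilon_I$ and $\delta_I$ along the chosen chain; and (J4) is the sandwich identity of the Corollary following Lemma \ref{newgenerators}. For the converse inclusion, I would show that each of the defining relations in $\sJ$ from \eqref{sJ}, when the original arrows are expressed via the chains above, is a consequence of (J1)--(J4). The braid relations coincide. The relation $\delta_I\upsilon_I=e_I$ reduces to (J2) upon telescoping the chain decomposition, using (J4) to see that the result is independent of the intermediate chain. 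The relation $\upsilon_I\delta_I=p_I|_{\{\tau_s\}}$ is precisely (J3). Thus the two ideals coincide, completing the identification.

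The main obstacle I anticipate is the well-definedness built into (J3) and (J4): a priori the products $\upsilon_{\emptyset,I_1}\cdots\upsilon_{I_{m-1},I}$ depend on the chosen chain $I_\bullet$, whereas $\upsilon_I$ does not. One must check that different chains give the same element modulo the relations, which is exactly what the sandwich relations (J4) guarantee for adjacent transpositions of chains, propagated to arbitrary chains by connectivity of the poset of chains under such moves. I would handle this by an induction on $m=|I|$, reducing any two chains to a common one through a sequence of elementary local modifications, each covered by a single instance of (J4). Once this path-independence is in hand, the verification that (J1)--(J4) generate $\langle\sJ\rangle$ becomes routine, and the isomorphism $RE_q(W)\cong RQ/\langle\sJ\rangle$ transports to the new presentation.
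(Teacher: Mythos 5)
Your first half is sound, and it is essentially all that the paper's own one-line proof contains: generation of the quotient algebra comes from Lemma \ref{newgenerators}(1), and the fact that (J1)--(J4) hold modulo $\sJ$ is Lemma \ref{newgenerators}(2),(3) together with the sandwich corollary. The genuine gap is in your converse inclusion, $\langle\sJ\rangle\subseteq$ (ideal generated by (J1)--(J4)). The chain decompositions $\upsilon_I=\upsilon_{\emptyset,I_1}\cdots\upsilon_{I_{m-1},I}$ and $\delta_I=\delta_{I,I_{m-1}}\cdots\delta_{I_1,\emptyset}$ that you propose to ``telescope'' are exactly Lemma \ref{newgenerators}(1): they are identities in $RQ/\langle\sJ\rangle$, not in the quotient of $RQ$ by (J1)--(J4), so invoking them to derive the relations of $\sJ$ from (J1)--(J4) is circular. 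Likewise, for $|I|\geq 2$ the relation $\upsilon_I\delta_I=p_I|_{\{\tau_s\}}$ is \emph{not} ``precisely (J3)'': as elements of $RQ$, the left-hand side of (J3) is the long path $\upsilon_{I_1}\delta_{I_1}\upsilon_{I_2}\delta_{I_2}\cdots\upsilon_I\delta_I\cdots\upsilon_{I_1}\delta_{I_1}$ through the intermediate vertices, which coincides with $\upsilon_I\delta_I$ only after $\sJ$ has been imposed. The root obstruction is that for $|I|\geq 2$ the arrows $\upsilon_I,\delta_I$ of $Q$ do not lie in the subalgebra of $RQ$ generated by the elements $e_K,\upsilon_{I,J},\delta_{J,I}$ ($I\sqsubset J$); only their images in the quotient do.

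In fact the inclusion you are trying to prove is false, so no repair along these lines is possible. Take $W=\fS_3$, $S=\{s,t\}$, $T=\{s,t\}$, and work over $R$ itself. In $RH_q$ the elements $p_T$ and the sign idempotent $e_-$ are orthogonal central idempotents (both exist over $R$ since $\pi(T)$ is invertible), with $p_{\{s\}}p_T=p_{\{t\}}p_T=p_T$, $p_{\{s\}}e_-=p_{\{t\}}e_-=0$, and $H_V:=(1-p_T-e_-)RH_q$ a two-sided ideal. Represent $RQ$ on $M=RH_q\oplus x_{\{s\}}RH_q\oplus x_{\{t\}}RH_q\oplus N$ with $N=R$: the $e_K$ act as the four projections, the arrows $\upsilon_{\{s\}},\delta_{\{s\}},\upsilon_{\{t\}},\delta_{\{t\}}$ act in the standard way (inclusions and left multiplication by $p_{\{s\}},p_{\{t\}}$), while $\upsilon_T(1)=p_T+e_-$ and $\delta_T=\mu$, the functional with $\mu(p_T)=\mu(e_-)=1$ and $\mu(H_V)=0$. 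One checks directly that every relation in (J1)--(J4) acts as zero: e.g.\ $\delta_{T,\{s\}}\upsilon_{\{s\},T}$ sends $1\mapsto\mu(p_{\{s\}}(p_T+e_-))=\mu(p_T)=1$, giving (J2); the (J3) path acts on $RH_q$ by $h\mapsto\mu(p_{\{s\}}h)\,p_T=p_Th$; and both sides of (J4) agree because $(p_{\{s\}}-p_{\{t\}})h\in H_V$ and $p_{\{s\}}(p_T+e_-)=p_T=p_{\{t\}}(p_T+e_-)$. Yet $\delta_T\upsilon_T$ acts on $N$ as multiplication by $\mu(p_T+e_-)=2$, so the generator $\delta_T\upsilon_T-e_T$ of $\sJ$ does not lie in the ideal generated by (J1)--(J4). (Note also that $\upsilon_T$ and $\upsilon_{\{s\}}\upsilon_{\{s\},T}$ act differently here, sending $1$ to $p_T+e_-$ and to $p_T$ respectively: this is exactly the failure of your telescoping.) Consequently the corollary can only be understood as a presentation statement on the new generators---that the kernel of the natural surjection $R\tilde Q\to RQ/\langle\sJ\rangle$ is generated by the lifted relations, which is essentially Theorem \ref{tildeQ}---and its completeness half requires an injectivity argument of the kind behind Corollary \ref{pres3}, using the concrete homomorphisms $u_{I,J}$, $d_{J,I}$, not a comparison of ideals inside $RQ$.
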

\begin{proof}
It follows from Lemma \ref{newgenerators} and the definition of $\sJ$ in \eqref{sJ}. 
\end{proof}

%We remark that the set of generators in Corollary \ref{newgenerators2} is a subset of generators proposed in 
%\cite{GW} by Williamson.
Note that (J3) and (J4) imply that $\upsilon_{\emptyset, I_{1}}\dots \upsilon_{I_{m-1}, I}\delta_{I, I_{m-1}'}\dots \delta_{I_2', I_1'}\delta_{I_1', \emptyset}=
p_{I}|_{\{\tau_s\}_{s\in S}}$ for all $I\in \La$ and sequences
$$\emptyset=I_0\sqsubset I_1\sqsubset I_2\sqsubset\dots \sqsubset I_m=I\text{ and }
\emptyset=I_0'\sqsubset I_1'\sqsubset I_2'\sqsubset\dots \sqsubset I_m'=I
.$$

\section{The Hasse quiver presentation}
Following Proposition \ref{pres2}  and Corollary \ref{newgenerators2}, 
we now give a new presentation for $RE_q(W)$ in term of the Hasse quiver of the Coxeter system $(W,S)$.

Recall the set $\La$ of finitary subsets of $S$. With the inclusion relation, $\La$ is a poset. If we replace every edge in the Hasse diagram of the poset $\La$ by a pair of arrows in opposite direction, the resulting quiver $\tilde{Q}=\tilde Q(W)$ is called the {\it Hasse quiver}
associated to $W$.  

Note that the new quiver $\tilde{Q}$ can be obtained from $Q$ by removing all arrows $\upsilon_I$, $\delta_I$ for $|I|>1$, renaming the arrows $\upsilon_I$, $\delta_I$ for 
$|I|=1$ to $\tilde\upsilon_{\emptyset,I}$, $\tilde\delta_{I,\emptyset}$, and introducing new arrows 
$\tilde\upsilon_{I, J}$ from $J$ to $I$ and $\tilde\delta_{J, I}$ from $I$ to $J$ for all $I,J\in\La^*$ with $I\sqsubset J$. That is,
the quiver has vertex set $\La$ and arrow set
$$\{\tilde\delta_{J,I},\tilde\upsilon_{I,J}\mid I,J\in\Lambda,I\sqsubset J\}.$$

% and $|J|=|I|+1$. 
%The new quiver $\tilde{Q}$  has the same vertex set $\Lambda$ as $Q$ and has
%new arrows $\upsilon_{I, J}$ from $J$ to $I$ and $\delta_{J, I}$ from $J$ to $I$ for all $I,J\in\La^*$ with $I\subset J$ and $|J|=|I|+1$, while the arrows $\upsilon_I$, $\delta_I$ for $|I|>1$ in $Q$ are removed.

For example,
for  $W=\fS_4$, the symmetric group on 4 letters,
the quiver $\tilde{Q}$ is as follows, 
where the arrows 
between $\emptyset$ and $\{1\}, \{2\}$, $\{3\}$ are the same as those in $Q$.
\begin{equation}\label{HasseA3}
\tilde{Q}:
\xymatrix{& \emptyset  \ar@/^/[d]  \ar@/^/[dl] \ar@/^/[dr]&\\
\{1\} \ar@/^/[d] \ar@/_/[dr]  \ar@/^/[ur] &\{2\}\ar@/^/[dr] \ar@/^/[dl]\ar@/^/[u]
&\{3\} \ar@/^/[d]\ar@/^/[dl]  \ar@/^/[ul] \\
\{1, 2\}\ar@/^/[dr]  \ar@/^/[ur]  \ar@/^/[u]&
\{1, 3\} \ar@/^/[d] \ar@/^/[ur] \ar@/_/[ul]&\{2, 3\}\ar@/^/[dl]\ar@/^/[ul]\ar@/^/[u]\\
&\{1, 2, 3\} \ar@/^/[ur]\ar@/^/[ul] \ar@/^/[u]&}
\end{equation}

The path algebra $R\tilde Q$ of $\tilde Q$ over a ring $R$ admits an anti-involution
\begin{equation}\label{tau}
\tau:R\tilde Q\longrightarrow R\tilde Q
\end{equation}
that interchanges $\tilde \upsilon_{I,J}$ and $\tilde \delta_{I,J}$.

%Since the arrows involving vertices of singletons is unchanged, the elements $\tau_s,s\in S$ remains in $R\tilde Q$. 

Let $\tilde e_K$ denote the trivial path in $\tilde Q$ at the vertex $K\in\La$. For each singleton $\{s\}\in \Lambda$, let 
$$\tilde\chi_{s}=(q+1)\tilde\upsilon_{\emptyset,\{s\}}\tilde\delta_{\{s\},\emptyset}\in R\tilde Q.$$
 For each $I\in\Lambda$, a total ordering $I_\bullet$ on $I$ as in \eqref{I.}, and a reduced expression $\underline{w}_{I_\bullet}$, let
\begin{equation}\label{chi_I}
\tilde\chi_{I_\bullet}=\tilde\chi_{\underline{w}_{I_\bullet}}+q\sum_{\underline{y}<\underline{w}_{I_\bullet}}a_{\underline{y},\underline{w}_{I_\bullet}}\tilde\chi_{\underline{y}}\text{ and }\tilde\rho_{I_\bullet}=\frac{\tilde\chi_{I_\bullet}}{\pi(I)},
\end{equation}
where $\tilde\chi_{\underline{w}}=\tilde\chi_{s_l}\tilde\chi_{s_{l-1}}\cdots\tilde \chi_{s_1}$ if $\underline{w}=(s_j,s_{l-1},\cdots,s_1)$ and $a_{\underline{y},\underline{w}_{I_\bullet}}\in\mathbb Z[q]$ are given in Lemma \ref{x_I}.

%For example, if $I=\{1,2\}$ as in \S6, then 
%$$\chi_{\{1\}\subset I}^{(m)}=\sum_{j=1}^mb^m_j\chi_{[j]1}\quad\text{ and }\quad\chi_{\{2\}\subset I}^{(m)}=\sum_{j=1}^mb^m_j\chi_{[j]2},$$
%where $b^m_j$ is defined in \eqref{bmj}.
%They are not equal in $\sQ_q$. Make them equal is one of the relations.

\begin{theorem}\label{tildeQ} Maintain the notations $\La$ and $\tilde \rho_{I_\bullet}$, etc., for $W$ as above.
Let $\tilde\sJ$ be the subset of $R\tilde Q$ defined by the following relations 
\begin{itemize}
\item[($\tilde{\text{J}}$1)] {\sf Idempotent relations}: \vspace{-1ex}
$$\tilde\delta_{J, I}\tilde\upsilon_{I, J}=\tilde e_{J}$$ for all $I, J\in \Lambda$ with $I\sqsubset J$;
\item[($\tilde{\text{J}}$2)] {\sf Sandwich relations}: \vspace{-1ex}
$$\tilde\upsilon_{I, J}\tilde\upsilon_{J, K}=\tilde\upsilon_{I, J'}\tilde\upsilon_{J', K}\text{ and }\tilde\delta_{K, J}\tilde\delta_{J, I}=\tilde\delta_{K, J'}\tilde\delta_{J', I}$$ for all $I,J,J',K\in\Lambda$ with
$I\sqsubset J\sqsubset K$ and $I\sqsubset J'\sqsubset K.$
\item[($\tilde{\text{J}}$3)] {\sf Extended braid relations}: \vspace{-1ex}
$$\tilde\upsilon_{\emptyset, I_{1}}\dots \tilde\upsilon_{I_{m-1}, I}\tilde\delta_{I, I_{m-1}}\dots\tilde \delta_{I_2, I_1}\tilde\delta_{I_1, \emptyset}=\tilde\rho_{I_\bullet}$$
 for all $I\in \La^*$, $|I|>1$, and given ordering $I_\bullet:\emptyset=I_0\sqsubset I_1\sqsubset\dots \sqsubset I_m=I.$
\end{itemize}
Then $RE_q(W)\cong RQ/\langle\sJ\rangle\cong R\tilde Q/\langle\tilde\sJ\rangle$.
\end{theorem}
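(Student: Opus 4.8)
The first isomorphism $RE_q(W)\cong RQ/\langle\sJ\rangle$ is precisely Proposition \ref{pres2}, so the entire task is the second isomorphism $RQ/\langle\sJ\rangle\cong R\tilde Q/\langle\tilde\sJ\rangle$, which I plan to prove by exhibiting two mutually inverse homomorphisms. By Corollary \ref{newgenerators2}, $RQ/\langle\sJ\rangle$ is generated by the $e_K,\upsilon_{I,J},\delta_{J,I}$ (for $I\sqsubset J$) subject to (J1)--(J4). Since (J2) is the idempotent relation $(\tilde{\text{J}}1)$ and (J4) is the sandwich relation $(\tilde{\text{J}}2)$, after identifying the arrows of $\tilde Q$ with these generators the problem reduces to showing that, modulo the idempotent and sandwich relations, the braid relations (J1) together with (J3) generate the same ideal as the extended braid relations $(\tilde{\text{J}}3)$.

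For one direction I would let $\tilde\Phi\colon R\tilde Q\to RQ/\langle\sJ\rangle$ be the homomorphism sending $\tilde\upsilon_{I,J}\mapsto\upsilon_{I,J}$, $\tilde\delta_{J,I}\mapsto\delta_{J,I}$, $\tilde e_K\mapsto e_K$, which is surjective by Corollary \ref{newgenerators2}. To see that $\tilde\Phi$ annihilates $\tilde\sJ$ it suffices to treat $(\tilde{\text{J}}3)$, the others mapping to (J2) and (J4). Because the braid relations (J1) hold in $RQ/\langle\sJ\rangle$, the $\tau_s$ there satisfy all the Hecke relations, so by Lemma \ref{x_I} and \eqref{chi_I} the image of $\tilde\rho_{I_\bullet}$ is exactly $p_I|_{\{\tau_s\}}$, whence $\tilde\Phi$ carries $(\tilde{\text{J}}3)$ to (J3). (For $|I|=1$, relation (J3) reduces to the definitional identity $\upsilon_{\{s\}}\delta_{\{s\}}=p_{\{s\}}|_{\{\tau_s\}}$ coming from \eqref{tau_s}, which is why $(\tilde{\text{J}}3)$ is imposed only for $|I|>1$.) Hence $\tilde\Phi$ descends to a surjection $R\tilde Q/\langle\tilde\sJ\rangle\to RQ/\langle\sJ\rangle$.

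For the reverse map I would fix a maximal chain $\emptyset=I_0\sqsubset\cdots\sqsubset I_m=I$ for each $I$ and define $\Psi\colon RQ\to R\tilde Q/\langle\tilde\sJ\rangle$ by $e_K\mapsto\tilde e_K$ and by sending $\upsilon_I,\delta_I$ to the composites $\tilde\upsilon_{\emptyset,I_1}\cdots\tilde\upsilon_{I_{m-1},I}$ and $\tilde\delta_{I,I_{m-1}}\cdots\tilde\delta_{I_1,\emptyset}$; these are independent of the chain, since any two maximal chains in the Boolean interval $[\emptyset,I]$ (every subset of a finitary set is finitary) differ by elementary square moves, each an instance of $(\tilde{\text{J}}2)$. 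To check that $\Psi$ kills $\sJ$, the relations $\delta_I\upsilon_I=e_I$ collapse to $\tilde e_I$ by iterating $(\tilde{\text{J}}1)$ from the inside out; for the remaining families I would first verify that $\tilde\chi_s=\Psi(\tau_s)+\tilde e_\emptyset$ satisfies the Kazhdan--Lusztig presentation of Lemma \ref{NewpreHecke}, the quasi-idempotent relation $\tilde\chi_s^2=(q+1)\tilde\chi_s$ following from $(\tilde{\text{J}}1)$ just as in Proposition \ref{pres2}. This yields a homomorphism $RH_q\to R\tilde Q/\langle\tilde\sJ\rangle$, $x_s\mapsto\tilde\chi_s$, under which $p_I$ maps to $\tilde\rho_{I_\bullet}$ by Lemma \ref{x_I} and \eqref{chi_I}; relation $(\tilde{\text{J}}3)$ then says exactly that $\Psi$ respects (J3), and the braid relations (J1) are respected because $\tilde\chi_s$ satisfies them. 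Thus $\Psi$ descends to $\bar\Psi$, and a comparison on generators --- using $\upsilon_I=\upsilon_{\emptyset,I_1}\cdots\upsilon_{I_{m-1},I}$ from Lemma \ref{newgenerators}(1) for $\tilde\Phi\bar\Psi=\id$, and the inside-out collapse via $(\tilde{\text{J}}1)$ for $\bar\Psi\tilde\Phi=\id$ --- shows the two maps are mutually inverse.

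The heart of the argument, and the step I expect to be hardest, is verifying the braid relations for $\tilde\chi_s$ from $\tilde\sJ$ alone, i.e.\ that the braid relations (J1) are redundant in the Hasse presentation. For this I would apply $(\tilde{\text{J}}3)$ to $I=\{s,t\}$ along the two maximal chains $\emptyset\sqsubset\{s\}\sqsubset\{s,t\}$ and $\emptyset\sqsubset\{t\}\sqsubset\{s,t\}$: the sandwich relations $(\tilde{\text{J}}2)$ force the two left-hand sides $\tilde\upsilon_{\emptyset,\{s\}}\tilde\upsilon_{\{s\},\{s,t\}}\tilde\delta_{\{s,t\},\{s\}}\tilde\delta_{\{s\},\emptyset}$ and $\tilde\upsilon_{\emptyset,\{t\}}\tilde\upsilon_{\{t\},\{s,t\}}\tilde\delta_{\{s,t\},\{t\}}\tilde\delta_{\{t\},\emptyset}$ to coincide, so the two right-hand sides $\tilde\rho$ must agree, and this identity is exactly the braid relation of Lemma \ref{NewpreHecke}(b) read off in the variables $\tilde\chi_s,\tilde\chi_t$. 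Everything else is bookkeeping with the idempotent and sandwich relations; both delicate points --- this redundancy of the braid relations and the chain-independence of $\Psi$ --- ultimately rest on the fact that the interval below each $I$ is a Boolean lattice whose maximal chains are connected through the sandwich moves.
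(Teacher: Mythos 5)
Your proposal is correct and follows essentially the same route as the paper: the decisive step in both is deriving the braid relations for the $\tilde\chi_s$ from ($\tilde{\text{J}}$2) and ($\tilde{\text{J}}$3) applied to the two orderings of $\{s,t\}$, so that Lemma \ref{NewpreHecke} produces a homomorphic copy of $RH_q$ inside $R\tilde Q/\langle\tilde\sJ\rangle$ and identifies $\tilde\rho_{I_\bullet}$ with $p_I|_{\{\tilde\tau_s\}_{s\in S}}$, after which the natural map on arrows gives the isomorphism. Your explicit two-sided inverse $\bar\Psi$, with chain-independence obtained from sandwich moves in the Boolean interval $[\emptyset,I]$, merely spells out what the paper compresses into the assertion $\phi^{-1}(\langle\sJ\rangle)=\langle\tilde\sJ\rangle$ via Corollary \ref{newgenerators2}.
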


\begin{proof} By ($\tilde {\rm J}$3), for any $s\in S$ we have $\tilde\chi_{\{s\}}=(q+1)\tilde\upsilon_{\emptyset,\{s\}}\tilde\delta_{s,\emptyset}$. Thus, by ($\tilde {\rm J}$1),
we have $\tilde\chi_{\{s\}}^2=(q+1)\tilde\chi_{\{s\}}$. For any $s,t\in S$, ($\tilde {\rm J}$2) and($\tilde {\rm J}$3) imply $\tilde\rho_{(s,t)}=\tilde\rho_{(t,s)}$. Hence,
$\tilde \chi_{(s,t)}^{m_{s,t}}=\tilde \chi_{(t,s)}^{m_{s,t}}$. By Lemma \ref{NewpreHecke}, the subalgebra generated by $\tilde\chi_{s}$ is isomorphic to $RH_q(W)$. Note that this subalgebra is isomorphic to $\tilde e_\emptyset(R\tilde Q/\langle\tilde\sJ\rangle)\tilde e_\emptyset$ by ($\tilde {\rm J}$3).
Thus, $\tilde\rho_{I_\bullet}=\tilde\rho_I$ which is independent of the ordering on $I$. Further, if we put $\tilde \tau_s=\tilde\chi_{s}-1$ for all $s\in S$, then $\tilde\rho_{I_\bullet}=p_I|_{\{\tilde\tau_s\}_{s\in S}}$.

Consider the $R$-algebra homomorphism $\phi:R\tilde Q\to RQ$ sending $\tilde\upsilon_{I,J},\tilde\delta_{J,I},\tilde e_K$ to $\upsilon_{I,J},\delta_{J,I}, e_K$, respectively. This map induces a surjective algebra homomorphism
$\bar \phi:R\tilde Q\to RQ/\langle\sJ\rangle$. 
By Corollary \ref{newgenerators2}, it is clear that $\phi^{-1}(\langle\sJ\rangle)=\langle\tilde\sJ\rangle$ and, hence, $\phi$ induces the required isomorphism.
\end{proof}

The presentation in Theorem \ref{tildeQ} is called the {\it Hasse quiver presentation} of $E_q(W)$.

We now give a direct description of the isomorphism from $R\tilde Q/\langle\tilde\sJ\rangle$ to $RE_q(W)$ and turn the Hasse quiver presentations to a full presentation for $RE_q(W)$.

Note that, if $I\subseteq J$, then $x_J=hx_I=x_I \iota(h)$, where $h=h_{J,I}=\sum_{w^{-1}\in D_I\cap W_J}T_w$ and $\iota$ is the anti-automorphism on $RH_q$ introduced in Seciton 2.  
So  $x_JH_q$ is a submodule of 
$x_IH_q$  and 
$$p_J=\bigg(\frac{\pi(I)}{\pi(J)} h_{J,I} \bigg)p_I.$$  
%which can be deduced from Lemma \ref{cor} and Theorem \ref{pres2}. It can also be deduced from the fact that 
%as a basis for $x_JH_q$ extends to a basis for $x_IH_q$ 
%%************************
%This explains the extension of the basis.
%For $I\subset J$, $x_IH$ has basis $x_IT_w, w\in D_I$ Now replace the part of the basis $x_IT_w w\in D_J$ by $x_JT_w,w\in D_J$. Then one can see easily that
%$x_IT_w,w\in D_I-D_J$ union $x_JT_w,w\in D_J$ form a basis for $x_IH$.
%****************************
%the map $$p_IH_q\rightarrow p_JH_q$$
%by right multiplication with $p_J$ is a spliting map of the natural inclusion of $p_JH_q$ into $p_IH_q$. 
Denote by $$u_{I, J}:p_J(RH_q)\rightarrow p_I(RH_q) $$
the natural inclusion and by
$$
d_{J, I}: p_I(RH_q)\rightarrow p_J(RH_q) 
$$
the splitting map of the inclusion given by left multiplication with $\frac{\pi(I)}{\pi(J)}h_{J,I}$.  
%Let
%$$t_s=(1+q)u_{\{s\}}d_{\{s\}}-1_\emptyset,\;\;\;s\in S.$$
Define the map 
$$\hbar:R\tilde Q/\langle\tilde\sJ\rangle\to RE_q(W), $$ sending $\tilde\upsilon_{I,J},\tilde\delta_{J,I},\tilde e_K$ to $u_{I, J}$, $d_{J, I}$ and $1_K$, respectively, where $I,J,K\in\La$ with
$I\sqsubset J$.

\begin{corollary}\label{pres3} The map $\hbar$ induces an algebra isomorphism $R\tilde Q/\langle\tilde\sJ\rangle\cong RE_q(W)$. In other words, $RE_q(W)$ is the $R$-algebra generated by $u_{I, J}$, $d_{J, I}$ and $1_K$ subject to the  relations ($\tilde {\rm J}1$)--($\tilde {\rm J}3$) obtained by
replacing $\tilde\upsilon_{I,J},\tilde\delta_{J,I},\tilde e_K$ with $u_{I, J}$, $d_{J, I}$ and $1_K$, respectively, and the {\sf quiver relations}
\begin{itemize}
\item[($\tilde{\rm J}$4)] $u_{I, J}1_J=u_{I, J}=1_Iu_{I, J}$, $d_{J, I}1_I=d_{J, I}=1_Jd_{J, I}$ and  $1_I^2=1_I$, $1_I1_J=0$ for $I\not= J$.
\end{itemize} 
%The map  $$\mathbb{Q}\tilde{Q}/\langle\sJ\rangle \rightarrow \sS_0(W)\otimes_{\mathbb{Z}} %\mathbb{Q}%\mathbb{Q}Q/{\langle\sJ\rangle}$$ given by 
%$$\upsilon_I\mapsto \upsil_I, d_I\mapsto d_I,  a_Jb_J\mapsto 1-T_J$$
%for any $I\in \Lambda$ and any single element subset $J$ of $S$, is an isomorphism.
\end{corollary}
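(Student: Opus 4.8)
The plan is to leverage Theorem~\ref{tildeQ}, which already establishes $R\tilde Q/\langle\tilde\sJ\rangle\cong RE_q(W)$, and simply verify that the explicit map $\hbar$ is a concrete realisation of that abstract isomorphism. The target $RE_q(W)=\End_{RH_q}(\bigoplus_{I\in\La}p_I RH_q)$ decomposes as $\bigoplus_{I,J}\Hom_{RH_q}(p_J RH_q,p_I RH_q)$, and under the identification $\Hom_{RH_q}(p_J RH_q,p_I RH_q)\cong p_I RH_q\,p_J$ the generators $u_{I,J},d_{J,I},1_K$ are precisely the maps induced by left multiplication by $p_I$, by $\tfrac{\pi(I)}{\pi(J)}h_{J,I}$, and the identity on $p_K RH_q$, respectively. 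So the first step is to check that $\hbar$ is well defined, i.e.\ that $u_{I,J},d_{J,I},1_K$ actually satisfy the relations $(\tilde{\rm J}1)$--$(\tilde{\rm J}4)$ in $RE_q(W)$.

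First I would dispose of the quiver relations $(\tilde{\rm J}4)$: these are automatic from the block decomposition, since $1_K$ is the identity on the summand $p_K RH_q$ and annihilates the others, and $u_{I,J},d_{J,I}$ land in the appropriate $\Hom$-blocks by construction. Next, for $(\tilde{\rm J}1)$ I would compute $d_{J,I}u_{I,J}$ as the composite $p_J RH_q\hookrightarrow p_I RH_q\xrightarrow{\,\tfrac{\pi(I)}{\pi(J)}h_{J,I}\cdot\,}p_J RH_q$; using $x_J=h_{J,I}x_I$ (equivalently $p_J=\tfrac{\pi(I)}{\pi(J)}h_{J,I}p_I$) and $p_J^2=p_J$ this composite is the identity $1_J$ on $p_J RH_q$, giving the idempotent relation. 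The sandwich relations $(\tilde{\rm J}2)$ follow because both sides represent the same inclusion $p_K RH_q\hookrightarrow p_I RH_q$ (respectively the same splitting) factored through the two intermediate subspaces $p_J RH_q$ and $p_{J'} RH_q$; since these inclusions and splittings depend only on the subspaces $p_I RH_q\supseteq p_K RH_q$ and not on the chosen intermediate step, both factorisations agree, exactly as in the Corollary following Lemma~\ref{newgenerators}.

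The substantive step is the extended braid relation $(\tilde{\rm J}3)$: I must show that the long composite $u_{\emptyset,I_1}\cdots u_{I_{m-1},I}\,d_{I,I_{m-1}}\cdots d_{I_1,\emptyset}$ equals $p_I|_{\{\tilde\tau_s\}}$ acting on $RH_q$. Telescoping the $u$'s and $d$'s via the sandwich principle collapses the composite to $u_{\emptyset,I}d_{I,\emptyset}$, which is the endomorphism of $RH_q=p_\emptyset RH_q$ given by $h\mapsto \tfrac{1}{\pi(I)}x_I h$, i.e.\ left multiplication by $p_I$. It then remains to identify this with the polynomial $\tilde\rho_I=p_I|_{\{\tilde\tau_s\}}$; but under $\tilde\chi_s\mapsto x_s=1+T_s$, hence $\tilde\tau_s\mapsto T_s$, the element $\tilde\rho_{I_\bullet}$ maps precisely to $\tfrac{1}{\pi(I)}x_I$ by Lemma~\ref{x_I} and the definition \eqref{chi_I}, so left multiplication by it is exactly $u_{\emptyset,I}d_{I,\emptyset}$. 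This is the main obstacle, since it requires matching the recursively-defined polynomial expression for $x_I$ against the geometric composite of inclusions and splittings.

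Having verified $\hbar$ is a well-defined homomorphism, I would finish by noting it agrees with the isomorphism of Theorem~\ref{tildeQ}: both send $\tilde\upsilon_{I,J},\tilde\delta_{J,I},\tilde e_K$ to elements in the same $\Hom$-blocks realising the same inclusions and splittings, and since Theorem~\ref{tildeQ} already guarantees an isomorphism $R\tilde Q/\langle\tilde\sJ\rangle\cong RE_q(W)$ with these generators, $\hbar$ is forced to be that isomorphism. The relations $(\tilde{\rm J}1)$--$(\tilde{\rm J}3)$ together with the quiver relations $(\tilde{\rm J}4)$ then constitute a full presentation, completing the proof.
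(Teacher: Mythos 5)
Your proposal is correct, and its logical skeleton ultimately coincides with the paper's, but you add a layer of verification that the paper omits. The paper's own proof of Corollary \ref{pres3} is a one-line composition: the isomorphism of Theorem \ref{tildeQ} followed by those of Propositions \ref{pres2} and \ref{pres1} automatically lands the generators $\tilde\upsilon_{I,J},\tilde\delta_{J,I},\tilde e_K$ on $u_{I,J}$, $d_{J,I}$, $1_K$, with the quiver relations ($\tilde{\rm J}$4) implicit in path-algebra multiplication (see the remark following Lemma \ref{techlemma1}). You instead verify semantically, inside $RE_q(W)$, that these concrete maps satisfy ($\tilde{\rm J}$1)--($\tilde{\rm J}$4): the identity $\tfrac{\pi(I)}{\pi(J)}h_{J,I}\,p_J=p_J$ for ($\tilde{\rm J}$1), the formula $d_{J,I}\colon p_Ih\mapsto p_Jh$ (which makes both the sandwich relations and the telescoping of the long composite to left multiplication by $p_I$ immediate), and Lemma \ref{x_I} to identify $\hbar(\tilde\rho_{I_\bullet})$ with left multiplication by $p_I$. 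This is sound, and it is the same style of computation the paper only performs later, in the integral case (proof of Theorem \ref{rank2relations}); what it buys is a self-contained well-definedness check for $\hbar$. Two cautions. First, for the sandwich relations on the $d$'s, ``both sides split the same inclusion'' is not by itself an argument, since splittings of an inclusion are not unique; what makes it work is the explicit formula $d_{J,I}(p_Ih)=p_Jh$, so the composite is $p_Ih\mapsto p_Kh$ regardless of the intermediate $J$. Second, your closing claim that $\hbar$ is ``forced to be'' the isomorphism of Theorem \ref{tildeQ} holds only because the two homomorphisms agree on generators; that agreement is exactly the unwinding of Propositions \ref{pres2} and \ref{pres1} (the composite isomorphism sends $\tilde\upsilon_{I,J}$ to $d_Iu_J$, the inclusion $p_JRH_q\hookrightarrow p_IRH_q$, and $\tilde\delta_{J,I}$ to $d_Ju_I\colon p_Ih\mapsto p_Jh$, which equals $d_{J,I}$), a step you assert rather than compute --- and it is precisely the content of the paper's one-line proof. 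Note also that bijectivity cannot come from your direct verification alone; you correctly still lean on Theorem \ref{tildeQ} for that.
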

\begin{proof}
It follows from  Propositions \ref{pres1} and \ref{pres2} and Theorem \ref{tildeQ}.
\end{proof}

\begin{remark} \label{d'}
Let $\sZ=\mathbb Z[q^{\frac12},q^{-\frac12}]$ and let $\sZ E_q$ be the Hecke endomorphism  algebra obtained by base change to $\sZ$.
The generators in Theorem \ref{pres3} are scaled version of a subset of the generators for $\sZ E_q$ given in \cite{GW} by Williamson. 
More precisely,  $u_{I, J}, \frac{\pi(J)}{\pi(I)}d_{J, I}$ with $I\sqsubset J$ form a subset of Williamson's generators.
Let $$d'_{J, I}=  \frac{\pi(J)}{\pi(I)}d_{J, I}.$$
As each $\pi(I)$ is invertible in $R$, if we replace 
the relations ($\tilde{\rm J}$1) and ($\tilde{\rm J}$3) in the theorem by the following, we obtain a presentation of $RE_q$ in terms of the subset of Williamson's
 generators.
\begin{itemize}
\item[($\tilde{\rm J}$1$'$)] $d'_{J, I}u_{I, J}= \frac{\pi(J)}{\pi(I)}1_{J}$.
\item[($\tilde{\rm J}$3$'$)] $u_{\emptyset, I_{1}}\dots u_{I_{m-1}, I}  d'_{I, I_{m-1}}\dots d'_{I_2, I_1}d'_{I_1, \emptyset}=
x_{I}|_{\{t_s\}_{s\in S}}$ for any $I\in\La$ and sequence
$$\emptyset=I_0\sqsubset I_1\sqsubset I_2\sqsubset \dots \sqsubset I_m=I.$$
\end{itemize} 
Williamson's other generators  are monomials in  $d'_{J, I}$ and $u_{I, J}$ with $I\sqsubset J$.  
We will discuss the presentation of $E_q$ over $\mathbb{Z}[q]$ in the last two sections.
\end{remark}

\begin{remark}
Applying the functor $-\otimes_{R}\mathbb{Q}(q)$, 
Theorem \ref{pres3} gives a presentation of the Hecke endomorphism  algebra over $\mathbb{Q}(q)$, i.e. 
$$\mathbb{Q}(q)E_q=\mathbb{Q}(q)\otimes_{R}RE_q=\mathbb{Q}(q)\otimes_{\mathbb{Z}[q]}E_q.$$

Similarly, applying the functor $\mathbb{Z}\otimes_{R}-$, where $\mathbb Z$ is an $R$-module via the ring homomorphism $R\to\mathbb{Z}, q\mapsto 0$, 
Theorem \ref{pres3} gives a presentation of the $0$-Hecke endomorphism algebra 
$E_0=\mathbb{Z}\otimes_{\mathbb{Z}[q]}E_q.$
We will  discuss $0$-Hecke endomorphism algebras in the next section and show that the relations can be simplified.
\end{remark}

\section{0-Hecke endomorphism algebras}
The first application of Theorem \ref{pres3} is to give a presentation of the 0-Hecke endomorphism algebra $E_0$.
Recall the Hecke algebra $H_q=H_q(W)$ over $\mathbb Z[q]$ and its canonical basis $\{C^+_w\}_{w\in W}$ defined in $\eqref{C+}$. 

Consider the specialisation $\mathbb{Z}[q]\to\mathbb Z$ with $q\mapsto 0$ and let $H_0=\bZ\otimes_{\mathbb{Z}[q]}H_q$. We may regard $H_0$ as a degenerate Hecke algebra.
Let 
$$\scc_w=C^+_w\otimes 1, 
\mbox{ i.e. } \scc_w=C^+_w|_{q=0}.$$
Recall that $H_0$ is a $\mathbb{Z}$-algebra generated by 
$T_1, \dots, T_n$ subject to the relations $T_i^2=-T_i$ and the braid relations. 
We call these relations \emph{$0$-Hecke relations}. Note that in the case of the longest element $w_I$,
 $$\scc_{w_I}=\sum_{w\in W_I}T_w=x_I.$$
In particular, $\scc_{s}=T_s+1=x_{\{s\}}$ for all $s\in S$.

Let $w_{I,J}$ be the longest element in the double coset $W_IW_J$. Denote $\scc_{w_{I, J}}$ by $\scc_{I, J}$. Then $\scc_{I,I}=\scc_{w_I}$. Let $(W,*)$ be the Hecke monoid defined in \cite[\S4.5]{DDPW}.

\begin{lemma}\label{canonical}
\begin{itemize}\item[(1)]
The elements $\scc_w,w\in W$, under the multiplication in $H_0$ form a monoid, which is isomorphic to the Hecke monoid $(W,*)$.  In particular, they form a multiplicative basis for $H_0$.
\item[(2)] $\scc_I$  is an idempotent for every finitary $I\in\La^*$.
\item[(3)]
 For any finitary subsets $I,J\subseteq S$, $\scc_{I,I}\scc_{J,J}=\scc_{I,J}.$
 \end{itemize}
\end{lemma}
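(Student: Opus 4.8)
The plan is to derive everything from the behaviour of the Kazhdan--Lusztig multiplication formula \eqref{CsCw} under the specialisation $q\mapsto 0$. Specialising the first case of \eqref{CsCw} gives $\scc_s\scc_w=\scc_w$ when $sw<w$, since $(q+1)|_{q=0}=1$; in the case $sw>w$ the correction term carries a factor $q\cdot q^{\frac12(\ell(w)-\ell(y)-1)}$, whose total $q$-degree is at least $1$ (as $\ell(y)<\ell(w)$), so it dies at $q=0$ and one is left with $\scc_s\scc_w=\scc_{sw}$. Thus
$$\scc_s\scc_w=\begin{cases}\scc_w,& sw<w,\\ \scc_{sw},& sw>w,\end{cases}$$
which is exactly the defining rule of the Hecke monoid $(W,*)$ from \cite[\S4.5]{DDPW}, i.e. $\scc_s\scc_w=\scc_{s*w}$.

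To finish (1), I would first observe that the transition matrix from $\{T_w\}$ to $\{\scc_w\}$ is unitriangular with respect to the Bruhat order with $1$'s on the diagonal (because $P_{w,w}=1$), hence still invertible over $\mathbb Z$ after setting $q=0$; so $\{\scc_w\}_{w\in W}$ is a $\mathbb Z$-basis of $H_0$. Next, for a reduced expression $w=s_1\cdots s_l$ an easy induction using the displayed formula gives $\scc_w=\scc_{s_1}\cdots\scc_{s_l}$, and more generally, peeling generators off on the left together with associativity of $*$ shows $\scc_{r_1}\cdots\scc_{r_N}=\scc_{r_1*\cdots*r_N}$ for any sequence of generators. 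Combining these, $\scc_u\scc_v=\scc_{u*v}$ for all $u,v\in W$. Therefore $w\mapsto\scc_w$ is a monoid homomorphism $(W,*)\to(H_0,\cdot)$ which is injective onto its image (the $\scc_w$ being a basis, hence distinct and linearly independent), yielding the required isomorphism and showing that $\{\scc_w\}$ is a multiplicative basis.

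For (2), I would simply note that $\scc_I=\scc_{w_I}=x_I$ satisfies $x_I^2=\pi(I)x_I$ in $H_q$, and that $\pi(I)|_{q=0}=1$ since $e$ is the only length-zero element of $W_I$; specialising gives $\scc_I^2=\scc_I$. (Equivalently, $w_I*w_I=w_I$ in $(W,*)$, and one invokes (1).)

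Part (3) carries the real content. Here $\scc_{I,I}=\scc_{w_I}$ and $\scc_{J,J}=\scc_{w_J}$, so by (1) it suffices to prove the monoid identity $w_I*w_J=w_{I,J}$. I would use the characterisation of the Demazure product $u*v$ as the Bruhat-maximum of $\{xy\mid x\le u,\ y\le v\}$. Since the Bruhat interval below the longest element of a finite parabolic is the whole parabolic, $\{x\mid x\le w_I\}=W_I$ and $\{y\mid y\le w_J\}=W_J$, so the set in question is exactly the double coset $W_IW_J$ of the identity, whose unique longest element is by definition $w_{I,J}$. Hence $w_I*w_J=w_{I,J}$, and $\scc_{I,I}\scc_{J,J}=\scc_{w_I*w_J}=\scc_{w_{I,J}}=\scc_{I,J}$. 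The main obstacle is precisely this last step: justifying that the Demazure product of the two longest parabolic elements picks out the longest element of the double coset, which rests on the parabolic Bruhat-interval fact and on the existence of a unique longest element in each double coset.
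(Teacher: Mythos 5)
Your proposal is correct, and its engine is the same as the paper's: specialise the Kazhdan--Lusztig formula \eqref{CsCw} at $q=0$ to obtain $\scc_s\scc_w=\scc_w$ if $sw<w$ and $\scc_s\scc_w=\scc_{sw}$ if $sw>w$, and then read everything off from the resulting Hecke-monoid structure. The difference is only in how much is left to citation: where the paper invokes \cite[Prop.~4.34]{DDPW} for the isomorphism with $(W,*)$ and simply declares (2) and (3) to be consequences of that multiplication rule, you prove (1) by hand (unitriangularity of the transition matrix plus induction along reduced words) and give a genuine argument for (3) via the characterisation of the Demazure product $u*v$ as the Bruhat-maximum of $\{xy\mid x\le u,\ y\le v\}$. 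That characterisation is a correct and known fact, but it is itself nontrivial and is imported from outside the paper's toolkit; if you prefer to stay inside it, (3) also follows by iterating the $q=0$ rule: write $w_J=w_{I\cap J}\cdot(w_{I\cap J}w_J)$ with lengths adding, note $w_I*x=w_I$ for all $x\in W_I$, and conclude $w_I*w_J=w_I*(w_{I\cap J}w_J)=w_I\cdot(w_{I\cap J}w_J)=w_{I,J}$, the last Demazure product being an ordinary product because lengths add. Either way the result stands; your version is more self-contained, the paper's is shorter.
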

\begin{proof}
By \eqref{CsCw}, we have 
\begin{equation} \label{0canonicalmult}
\scc_s\scc_w=\begin{cases} \scc_w,&\text{ if }sw<w;\\ \scc_{sw},&\text{ if }sw>w.\\\end{cases}
\end{equation}
Thus, $\{\scc_s \mid s\in S\}$ generate a monoid whose associated monoid algebra is $H_0$. By \cite[Prop.~4.34]{DDPW}, the map $\scc:(W,*)\to H_0,w\mapsto \scc_w$ is the required monoid isomorphism, proving (1). Statements (2) and (3) are consequences of \eqref{0canonicalmult}.
\end{proof}

We call $\{\scc_w\mid w\in W\}$  the {\it degenerate canonical basis} of $H_0$. The following is clear from \eqref{0canonicalmult} and Lemma \ref{canonical} (1).

\begin{corollary} \label{xy}
Let $I,J,K$ be finitary subsets of $S$ and let $x\in D_{I,J}^+$ and $y\in D_{J,K}^+$.
 Then $\scc_x\scc_y=\scc_{x*y}$. In particular, $x*y\in D_{I,K}^+$.
\end{corollary}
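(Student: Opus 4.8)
The plan is to read off the statement as a direct consequence of Lemma \ref{canonical}, so the proof will be short. The final statement, Corollary \ref{xy}, asserts that for finitary subsets $I,J,K\subseteq S$ and longest double-coset representatives $x\in D_{I,J}^+$ and $y\in D_{J,K}^+$, we have $\scc_x\scc_y=\scc_{x*y}$ with $x*y\in D_{I,K}^+$. The key observation is that $x$ and $y$ are not arbitrary elements of $W$: they are the \emph{longest} elements of their respective double cosets, which is precisely the extra data that lets us identify the product and pin down its position.

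First I would recall from Lemma \ref{canonical}(1) that $w\mapsto\scc_w$ is a monoid isomorphism from the Hecke monoid $(W,*)$ to the multiplicative monoid of the degenerate canonical basis of $H_0$. Applying this isomorphism to the pair $x,y$ immediately gives $\scc_x\scc_y=\scc_{x*y}$, which is the first assertion; no computation beyond invoking the isomorphism is needed. The substantive content is therefore the second assertion, namely that the $*$-product of the two longest representatives again lands in the set $D_{I,K}^+$ of longest $W_I$-$W_K$ double-coset representatives.

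To establish $x*y\in D_{I,K}^+$ I would argue via Lemma \ref{canonical}(3), which says $\scc_{I,I}\scc_{J,J}=\scc_{I,J}$, together with the description $\scc_{I,J}=\scc_{w_{I,J}}$ of the longest element of the double coset $W_IW_J$. The point is that $x$ being the longest element of a $W_I$-$W_J$ double coset means $\scc_{I,I}\scc_x\scc_{J,J}=\scc_x$ under the $0$-multiplication rule \eqref{0canonicalmult}: left-multiplying by any $\scc_s$ with $s\in I$ fixes $\scc_x$ since $sx<x$, and symmetrically on the right for $s\in J$, so $x$ is $(W_I,W_J)$-stable in the monoid sense. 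The same holds for $y$ with respect to $(W_J,W_K)$. Multiplying, $\scc_{I,I}(\scc_x\scc_y)\scc_{K,K}=\scc_{I,I}\scc_x\scc_y\scc_{K,K}=\scc_x\scc_y=\scc_{x*y}$, using associativity of the monoid and the fact that the interior $\scc_{J,J}\scc_{J,J}=\scc_{J,J}$ is idempotent by Lemma \ref{canonical}(2). This left/right stability under $\scc_{I,I}$ and $\scc_{K,K}$, read back through \eqref{0canonicalmult}, says exactly that $sz<z$ for all $s\in I$ acting on the left and all $s\in K$ acting on the right, where $z=x*y$; equivalently $z$ is the longest element of its $W_I$-$W_K$ double coset, i.e. $z\in D_{I,K}^+$.

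The main obstacle I anticipate is the bookkeeping in the last step: one must verify that ``$\scc_{I,I}\scc_z=\scc_z$ and $\scc_z\scc_{K,K}=\scc_z$'' genuinely characterises membership in $D_{I,K}^+$, rather than merely in the double coset $W_IzW_K$. This requires checking that the stability forces $z$ to be the \emph{unique longest} element, which follows because the longest element of a finite Coxeter double coset is characterised by being annihilated (fixed) by left multiplication by all $T_s$, $s\in I$, and right multiplication by all $T_s$, $s\in K$, in the $q=0$ monoid --- a property no shorter element of the coset enjoys. Once that characterisation is in hand the conclusion is immediate, so I would state it cleanly as a remark and let Lemma \ref{canonical} do the rest of the work.
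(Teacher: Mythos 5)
Your proposal is correct, and it is built from exactly the two ingredients the paper itself cites --- the multiplication rule \eqref{0canonicalmult} and the monoid isomorphism of Lemma \ref{canonical}(1) --- but where the paper simply declares the corollary ``clear'', you supply an actual argument, so a comparison is worth recording. The identity $\scc_x\scc_y=\scc_{x*y}$ is, as you say, immediate from the isomorphism; that is all the paper records. For the assertion $x*y\in D_{I,K}^+$ your route is the stability argument: $x\in D_{I,J}^+$ forces $\scc_{w_I}\scc_x=\scc_x$ and $\scc_x\scc_{w_J}=\scc_x$, likewise for $y$, hence $z=x*y$ satisfies $\scc_{w_I}\scc_z=\scc_z=\scc_z\scc_{w_K}$, which via \eqref{0canonicalmult} gives $sz<z$ for all $s\in I$ and $zs<z$ for all $s\in K$; you then conclude $z\in D_{I,K}^+$ from the claim that, inside a double coset $W_IdW_K$ with $I,K$ finitary, the longest element is the \emph{unique} element whose left descent set contains $I$ and whose right descent set contains $K$. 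That characterisation is the real content of your last step, and you assert it rather than prove it (your justification essentially restates it). It is true, and it follows quickly from the length-additive bijection $W_I\times(D_{I^d\cap K}\cap W_K)\to W_IdW_K$, $(w,y)\mapsto wdy$, recalled in Section \ref{section2}: the elements maximal in their left $W_I$-cosets are exactly the $w_Idb$ with $b\in D_{I^d\cap K}\cap W_K$, and these all lie in the \emph{single} right coset $w_IdW_K$, which has a unique longest element; so at most one element of the double coset can be maximal simultaneously in its left $W_I$-coset and its right $W_K$-coset, and since the longest element $\fp^+$ of the double coset clearly has both properties, it is the only one. With that lemma inserted your proof is complete, and it is in fact more informative than the paper's one-line proof, at the cost of this extra (standard) piece of Coxeter combinatorics.
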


\begin{lemma}\label{newrel}  %Let $(m_{st})$ be the Coxeter matrix associated to the Coxeter system $(W, S)$.
The $0$-Hecke algebra $H_0$ is generated by $\scc_s, s\in S$, subject to the relations.
\begin{itemize}
\item[(1)] 
$\underbrace{\scc_s\scc_t\dots}_{m_{s,t}}=\underbrace{\scc_t\scc_s\dots}_{m_{s,t}}$, where $m_{s,t}$ is the order of $st$ in $W$.
\item[(2)]  $\scc_s^2=\scc_s$ for all $s\in S$.
\end{itemize}
\end{lemma}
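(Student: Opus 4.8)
The plan is to realise the abstractly presented algebra as $H_0$ itself by a rank count. Let $\tilde H_0$ denote the $\mathbb{Z}$-algebra generated by symbols $\scc_s$ $(s\in S)$ subject to relations (1) and (2). First I would check that both relations genuinely hold in $H_0$, which produces a surjective $\mathbb{Z}$-algebra homomorphism $\pi\colon\tilde H_0\to H_0$, $\scc_s\mapsto\scc_s$; surjectivity is automatic since $\scc_s=1+T_s$ and the $T_s$ generate $H_0$. Relation (2) is immediate from \eqref{0canonicalmult} with $w=s$, giving $\scc_s\scc_s=\scc_s$ (equivalently $\scc_s^2=(T_s+1)^2=T_s^2+2T_s+1=\scc_s$ using $T_s^2=-T_s$). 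For relation (1) I would iterate \eqref{0canonicalmult}: each left multiplication by $\scc_s$ or $\scc_t$ increases length along the reduced word $sts\cdots$, so the alternating product of $m_{s,t}$ factors telescopes to $\scc_{w_{\{s,t\}}}$, the longest element of $W_{\{s,t\}}$; reading the product in the opposite order gives the same element, so both sides of (1) equal $\scc_{w_{\{s,t\}}}$.

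Next I would bound the rank of $\tilde H_0$ above by $|W|$. For each $w\in W$ fix a reduced word $w=s_{i_1}\cdots s_{i_l}$ and set $\scc_w:=\scc_{s_{i_1}}\cdots\scc_{s_{i_l}}\in\tilde H_0$. By the Matsumoto--Tits theorem any two reduced words for $w$ are connected by braid moves, and relation (1) is exactly invariance of such a product under braid moves, so $\scc_w$ is well defined, independently of the chosen word. I then claim the $\mathbb{Z}$-span $M$ of $\{\scc_w\mid w\in W\}$ equals $\tilde H_0$. Since $\scc_e=1\in M$, it suffices to show $M$ is a left ideal, i.e.\ closed under left multiplication by each $\scc_s$. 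If $sw>w$ then $s\,s_{i_1}\cdots s_{i_l}$ is reduced and $\scc_s\scc_w=\scc_{sw}\in M$; if $sw<w$ then $w$ has a reduced word beginning with $s$, say $\scc_w=\scc_s\scc_{w'}$ with $\ell(w')=\ell(w)-1$, whence relation (2) gives $\scc_s\scc_w=\scc_s^2\scc_{w'}=\scc_s\scc_{w'}=\scc_w\in M$. Hence $M=\tilde H_0$ is spanned by the $|W|$ elements $\scc_w$.

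Finally I would conclude. By Lemma \ref{canonical}(1) the images $\pi(\scc_w)=\scc_w\in H_0$ form the degenerate canonical basis, hence are $\mathbb{Z}$-linearly independent and number $|W|$. A surjective module map carrying a spanning set onto a basis is an isomorphism: any dependence $\sum_w c_w\scc_w=0$ in $\tilde H_0$ would map under $\pi$ to $\sum_w c_w\scc_w=0$ in $H_0$, forcing all $c_w=0$, so the $\scc_w$ are a $\mathbb{Z}$-basis of $\tilde H_0$ and $\pi$ is an isomorphism. This shows that (1) and (2) are defining relations.

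The step carrying the real content is the well-definedness of $\scc_w$ together with the closure of $M$: everything hinges on having \emph{exactly} the braid relations (1), so that Matsumoto--Tits applies, and \emph{exactly} the idempotent relations (2), to absorb the repeated generator in the length-decreasing case. I note a shorter alternative as a cross-check: relations (1), (2) are the $q\mapsto0$ specialisations of the presentation in Lemma \ref{NewpreHecke}. Indeed, Lemma \ref{lastlemma} expands $x_{(s,t)}^{(m)}$ into alternating monomials in $x_s,x_t$ in which every coefficient except that of the top monomial is divisible by $q$, so at $q=0$ only the longest monomial survives and the braid relation $x_{(s,t)}^{(m_{s,t})}=x_{(t,s)}^{(m_{s,t})}$ collapses to the plain braid relation (1) on the $\scc_s$, while the quasi-idempotent relation collapses to (2); since base change of a presentation along $\mathbb{Z}[q]\to\mathbb{Z}$ preserves generators and specialised relations, this yields the lemma at once. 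I would, however, record the direct argument above as the self-contained proof.
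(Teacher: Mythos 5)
Your proof is correct, but it follows a genuinely different route from the paper's. The paper's own proof is two lines: since $H_0$ is, by definition, presented by the $T_s$ subject to the $0$-Hecke relations, it observes that under the substitution $\scc_s=T_s+1$ the quadratic relations $T_s^2=-T_s$ and $\scc_s^2=\scc_s$ are equivalent, and that the braid relations on the $T_s$ are equivalent to the relations (1) on the $\scc_s$ (citing \eqref{0canonicalmult} for the latter); in other words, it is an equivalence-of-presentations argument, transferring the known $T$-presentation of $H_0$ to the generators $\scc_s$. You instead argue from scratch: you form the abstract algebra $\tilde H_0$ on relations (1) and (2), invoke Matsumoto--Tits together with relation (1) to obtain well-defined elements $\scc_w$ for $w\in W$, use relation (2) to show these $|W|$ elements span $\tilde H_0$, and then compare with the degenerate canonical basis of $H_0$ (Lemma \ref{canonical}(1)) to conclude that the surjection $\tilde H_0\to H_0$ is an isomorphism. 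Your route is longer but self-contained and more informative: it produces a $\mathbb{Z}$-basis $\{\scc_w\}_{w\in W}$ of the abstractly presented algebra and pinpoints exactly where each relation enters, whereas the paper's argument is shorter but leaves terse the one step carrying real content, namely that modulo the quadratic relations the $T$-braid relations and the $\scc$-braid relations generate the same ideal. Your closing ``cross-check'' (specialising the presentation of Lemma \ref{NewpreHecke} at $q=0$ and using Lemma \ref{lastlemma} to see that the relations $x_{(s,t)}^{(m_{s,t})}=x_{(t,s)}^{(m_{s,t})}$ collapse to (1), while the quasi-idempotent relations collapse to (2)) is in fact the cleanest justification of precisely that step, and is the variant closest in spirit to the paper's intended proof.
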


\begin{proof}
The equivalence between the relations in (1)
 and the braid relations follows from (\ref{0canonicalmult}) and clearly    $T_i^2=-T_i$ implies that $(T_i+1)^2=(T_i+1)$ and 
vice versa. So the claim holds.
\end{proof}

We now give an interpretation of the 0-Hecke endomorphism algebra in terms of the degenerate canonical basis $\scc_w,w\in W$.
Let  
$$\sB=\{(I,\scc_{d},J)\mid I,J\in\La,d\in D_{I,J}^+\}.$$
Form the free $\mathbb Z$-module $\mathbb Z\sB$ with basis $\sB$ and define multiplication with
$$(I,\scc_x,J)*(K,\scc_y,L)=\delta_J^K(I,\scc_x\scc_y,L),$$
where $\delta_J^K=1$ if $J=K$ and 0 otherwise.
Note that, by Corollary~\ref{xy}, the multiplication is well-defined. Note also that $f_I=(I,\scc_{I,I},I)$ is an idempotent and $f_\emptyset \sB f_\emptyset$ is isomorphic to the Hecke monoid $(W,*)$.
%The element $(I,w_I,I)$ is also an idempotent.

\begin{theorem} With this multiplication, $\mathbb Z\sB$ becomes an associative algebra with the identity $1=\sum_{I\in\La}f_I$. Moreover, we have $E_0\cong \mathbb Z\sB$.
\end{theorem}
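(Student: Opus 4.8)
The plan is to construct an explicit isomorphism $E_0 \cong \mathbb{Z}\sB$ by exhibiting $\mathbb{Z}\sB$ as the natural endomorphism-algebra model of $\bigoplus_{I\in\La}\scc_I H_0$ under the degenerate canonical basis. First I would recall that $E_0 = \End_{H_0}\big(\bigoplus_{I\in\La}x_I H_0\big)$, since $E_0 = E_q\otimes\mathbb{Z}$ and the base change commutes with taking $\Hom$-spaces here because each $x_I H_q$ is a free $\mathbb{Z}[q]$-module direct summand (the idempotent $p_I$ lives over $R$, but after specialising $q=0$ the module $x_I H_0 = \scc_I H_0$ is the corresponding image). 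The key structural fact, by Lemma \ref{canonical}(2), is that each $\scc_I$ is a genuine idempotent in $H_0$, so $\scc_I H_0$ is a direct summand of $H_0$ and $E_0 = \bigoplus_{I,J}\Hom_{H_0}(\scc_I H_0,\scc_J H_0)$.

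The heart of the matter is to show that $\Hom_{H_0}(\scc_I H_0, \scc_J H_0)$ has the degenerate double-coset basis $\{(I,\scc_d,J)\mid d\in D_{I,J}^+\}$. Using the standard identification $\Hom_{H_0}(\scc_I H_0,\scc_J H_0)\cong \scc_J H_0 \scc_I$ (a homomorphism is determined by the image $\scc_J h \scc_I$ of the generator $\scc_I$, via right multiplication), I would compute $\scc_J H_0 \scc_I$ explicitly. Since $\{\scc_w\mid w\in W\}$ is a multiplicative basis by Lemma \ref{canonical}(1), and using the multiplication rule \eqref{0canonicalmult} together with Lemma \ref{canonical}(3) giving $\scc_J \scc_w \scc_I = \scc_{w_J}\scc_w\scc_{w_I}$, I would show that $\scc_J \scc_w \scc_I = \scc_{w_J * w * w_I}$ and that $w_J * w * w_I$ is the longest element of the double coset $W_J w W_I$. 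Thus the distinct nonzero products $\scc_J\scc_w\scc_I$ are indexed precisely by the longest double-coset representatives $d\in D_{J,I}^+$, giving a $\mathbb{Z}$-basis $\{\scc_d\}$ for $\scc_J H_0\scc_I$ and hence a bijection with the basis elements $(I,\scc_d,J)$ of $\mathbb{Z}\sB$.

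Having matched bases, I would verify that the composition of homomorphisms corresponds exactly to the product $*$ on $\mathbb{Z}\sB$. Under the identification $\Hom_{H_0}(\scc_I H_0,\scc_J H_0)\cong \scc_J H_0\scc_I$, composing a map with matrix element $\scc_x\in\scc_J H_0\scc_I$ (from $\scc_I H_0$ to $\scc_J H_0$) with one having element $\scc_y\in\scc_L H_0\scc_J$ corresponds to the product $\scc_y\scc_x$ in $H_0$, which by Corollary \ref{xy} equals $\scc_{y*x}$ with $y*x\in D_{L,I}^+$. This is precisely the rule $(J,\scc_y,L)*(I,\scc_x,J)=(I,\scc_x\scc_y,L)$ read with the correct handedness, and the orthogonality $\delta_J^K$ records that intermediate indices must agree. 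The identity element $\sum_I f_I$ corresponds to $\sum_I 1_{\scc_I H_0}$, the identity of $E_0$.

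The main obstacle is the careful bookkeeping in the double-coset identification: one must confirm that $\scc_{w_J}\scc_w\scc_{w_I}=\scc_d$ where $d$ is the \emph{longest} element of $W_J w W_I$ (not merely \emph{some} element of the coset), and that distinct double cosets yield distinct—and linearly independent—basis elements $\scc_d$. The first point follows from \eqref{0canonicalmult}, since left-multiplying $\scc_w$ by $\scc_s$ for $s\in J$ either fixes it or lengthens it, so iterating over a reduced word for $w_J$ (and similarly $w_I$ on the right, using the monoid structure of Lemma \ref{canonical}(1) and Corollary \ref{xy}) drives $w$ to the unique longest element of its double coset; the $*$-monoid being idempotent on each $\scc_I$ guarantees the endpoint is stationary. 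The second point is automatic because $\{\scc_w\}$ is a basis of $H_0$ and distinct $d\in D_{J,I}^+$ give distinct $w\in W$. Once these are settled, the isomorphism $E_0\cong\mathbb{Z}\sB$ and the associativity and identity claims all follow.
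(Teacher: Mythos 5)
Your core strategy coincides with the paper's: identify the Hom-components of $E_0$ with $\mathbb Z$-spans of degenerate canonical basis elements $\scc_d$ indexed by longest double coset representatives, then match composition with the $*$-product via Corollary \ref{xy}. The difference is where the basis statement comes from. The paper simply quotes the basis $\{\Theta^d_{I,J}\mid d\in D^+_{I,J}\}$ from \cite[Th.~2.3]{Du} and only verifies $\Theta^x_{I,J}\Theta^y_{J,K}=\Theta^{x*y}_{I,K}$; you instead re-derive the basis at $q=0$ from Lemma \ref{canonical}(2): since each $\scc_I$ is an honest idempotent in $H_0$ (unlike $x_I$ over $\mathbb Z[q]$), one has $\Hom_{H_0}(\scc_IH_0,\scc_JH_0)\cong \scc_JH_0\scc_I$, and this corner space is computed from the monoid structure, $\scc_{w_J}\scc_w\scc_{w_I}=\scc_{w_J*w*w_I}$ with $w_J*w*w_I$ the longest element of $W_JwW_I$. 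This is more self-contained, but it shifts weight onto the identification $E_0=E_q\otimes\mathbb Z\cong \End_{H_0}\bigl(\bigoplus_{I\in\La}\scc_IH_0\bigr)$, and your justification of that step is not adequate: freeness of $x_IH_q$ over $\mathbb Z[q]$ (or its being a $\mathbb Z[q]$-direct summand of $H_q$) does not make $-\otimes_{\mathbb Z[q]}\mathbb Z$ commute with $\Hom_{H_q}$, because $x_JH_q$ is in general not projective as an $H_q$-module. What is needed is exactly the double-coset basis theorem over $\mathbb Z[q]$ (the result the paper cites), together with the observation that specialisation carries that basis to a basis of $\Hom_{H_0}(\scc_JH_0,\scc_IH_0)$; your corner-space computation supplies the target basis, but the compatibility of the two sides still has to be argued or cited.

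There is also a concrete bookkeeping error in your last step. Under your identification, a homomorphism $\phi:\scc_IH_0\to\scc_JH_0$ has matrix element $\scc_x=\phi(\scc_I)\in\scc_JH_0\scc_I$, so $x\in D^+_{J,I}$ and $\phi$ must be labelled by the triple $(J,\scc_x,I)$, not $(I,\scc_x,J)$ (the latter would require $x\in D^+_{I,J}$, contrary to the definition of $\sB$). Composing with $\psi:\scc_JH_0\to\scc_LH_0$, labelled $(L,\scc_y,J)$, the composite $\psi\circ\phi$ is left multiplication by $\scc_y\scc_x$ and corresponds to $(L,\scc_y,J)*(J,\scc_x,I)=(L,\scc_y\scc_x,I)$, which is an instance of the defining rule of $\mathbb Z\sB$. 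The formula you actually wrote, $(J,\scc_y,L)*(I,\scc_x,J)=(I,\scc_x\scc_y,L)$, is not: by the paper's rule the left-hand side equals $\delta^I_L(J,\scc_y\scc_x,J)$, and the product $\scc_x\scc_y$ on your right-hand side is in the wrong order relative to your own (correct) statement that composition corresponds to $\scc_y\scc_x$. This is a repairable relabelling slip rather than a conceptual gap, but as written the verification that your bijection is an algebra homomorphism does not go through.
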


\begin{proof} The isomorphism can be easily seen by using the degenerate canonical basis 
$$\{\Theta_{I,J}^d\mid I,J\in\La,d\in D_{I,J}^+\}$$ for $E_0$, where $\Theta_{I,J}^d\in E_0$ is defined by
(cf. \cite[Th.~2.3]{Du}) 
$$\Theta_{I,J}^d(\scc_{w_K}h)=\delta_J^K\scc_dh$$
for all $K\in\Lambda,h\in H_0$. By \eqref{0canonicalmult}, it is easy to see that $\Theta_{I,J}^x\Theta_{J,K}^y(\scc_{w_K})=\scc_x\scc_y=\scc_{x*y}=\Theta_{I,K}^{x*y}(\scc_{w_K}).$ Hence, $\Theta_{I,J}^x\Theta_{J,K}^y=\Theta_{I,K}^{x*y}$ and the required isomorphism is the map $(I,\scc_x,J)\mapsto \Theta_{I,J}^x$.
\end{proof}

Following Corollary \ref{pres3} and applying the functor $\mathbb{Z}\otimes_{R}-$, we have 
the following presentation of the $0$-Hecke endomorphism algebras $E_0$.

\begin{theorem}\label{pres4}The algebra $E_0$  with $1=\sum_{I\in \Lambda}1_I$ is generated by $u_{I, J}$, $d_{J, I}$ and $1_K$, where $I,J,K\in\La$ with
$I\sqsubset J$ subject to the  relations 
\begin{itemize}
%\item[(1)] %$0$-Hecke relations on $\{c_s \mid s\in S\}$, where $c_s=u_{\emptyset,\{s\}}d_{\{s\},\emptyset}$;
%(i) ($u_{\emptyset,\{s\}}d_{\{s\},\emptyset})^2=-u_{\emptyset,\{s\}}d_{\{s\},\emptyset}$, for any $s\in S$.\noindent (ii) 
%$\underbrace{(u_{\emptyset,\{s\}}d_{\{s\},\emptyset}) (u_{\emptyset,\{t\}}d_{\{t\},\emptyset})\dots }_{m_{st}}
%=\underbrace{(u_{\emptyset,\{t\}}d_{\{t\},\emptyset}) (u_{\emptyset,\{s\}}d_{\{s\},\emptyset})\dots }_{m_{st}}$ for any  $s, t\in S$.
\item[(1)] $d_{J, I}u_{I, J}=1_{J}$ for all $I,J\in\La$ with
$I\sqsubset J$ ;
\item[(2)] $u_{I, J}u_{J, K}=u_{I, J'}u_{J', K}$ and $d_{K, J}d_{J, I}=d_{K, J'}d_{J', I}$ for all $I,J,J',K\in\Lambda$ with $I\sqsubset J\sqsubset K$ and $I\sqsubset J'\sqsubset K$;
\item[(3)] $u_{\emptyset, I_{1}}\dots u_{I_{m-1}, I}d_{I, I_{m-1}}\dots d_{I_2, I_1}d_{I_1, \emptyset}=p_{\{s_l\}}\cdots p_{\{s_2\}}p_{\{s_1\}}$,
%\;(p_{\{s_i\}}=u_{\emptyset,\{s_i\}}d_{\{s_i\},\emptyset})$
%x_I|_{\{u_{\{s\}}d_{\{s\}}-1_\emptyset\}_{s\in S}}$
%x_{I}(u_{\{1\}}d_{\{1\}},\dots, u_{\{n\}}d_{\{n\}})$ 
for any $I\in \La^*$, and any reduced expression of $w_{I_\bullet}=s_i\cdots s_2s_1$ associated with the ordering
$$I_\bullet:\emptyset=I_0\sqsubset I_1\sqsubset I_2\sqsubset \dots \sqsubset I_m=I;$$ 
\item[(4)] $u_{I, J}1_J=u_{I, J}=1_Iu_{I, J}$, $d_{J, I}1_I=d_{J, I}=1_Jd_{J, I}$, $1_I^2=1_I$ and $1_I1_J=0$ ($I\not= J$).
\end{itemize} 
\end{theorem}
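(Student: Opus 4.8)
The plan is to obtain Theorem \ref{pres4} as a direct specialisation of Corollary \ref{pres3}, applying the right-exact functor $\mathbb{Z}\otimes_R-$ (where $\mathbb{Z}$ is the $R$-module via $q\mapsto 0$) to the presentation $RE_q(W)\cong R\tilde Q/\langle\tilde\sJ\rangle$. First I would note that $\mathbb{Z}\otimes_R RE_q=\mathbb{Z}\otimes_{\mathbb{Z}[q]}E_q=E_0$ by definition of the $0$-Hecke endomorphism algebra, and that base change commutes with presentation by generators and relations: if $RE_q$ is generated by $u_{I,J},d_{J,I},1_K$ subject to ($\tilde{\rm J}1$)--($\tilde{\rm J}3$) together with the quiver relations ($\tilde{\rm J}4$), then $E_0$ is generated by the same (images of the) generators subject to the images of those relations under $q\mapsto 0$. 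Thus relations (4) here are exactly ($\tilde{\rm J}4$), which involve no $q$ and so are unchanged; and relation (1) is the idempotent relation $d_{J,I}u_{I,J}=1_J$ (the image of ($\tilde{\rm J}1$)), while (2) is the sandwich relation (the image of ($\tilde{\rm J}2$)), both also manifestly $q$-independent.

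The only relation that genuinely requires attention is (3), the specialisation of the extended braid relation ($\tilde{\rm J}3$). Here I would compute the right-hand side $\tilde\rho_{I_\bullet}|_{q=0}$. Recall from \eqref{chi_I} that $\tilde\rho_{I_\bullet}=\tilde\chi_{I_\bullet}/\pi(I)$ with $\tilde\chi_{I_\bullet}=\tilde\chi_{\underline{w}_{I_\bullet}}+q\sum_{\underline{y}<\underline{w}_{I_\bullet}}a_{\underline{y},\underline{w}_{I_\bullet}}\tilde\chi_{\underline{y}}$; setting $q=0$ kills the entire correction sum (each term carries a factor of $q$) and leaves $\tilde\chi_{\underline{w}_{I_\bullet}}|_{q=0}=\tilde\chi_{s_l}\cdots\tilde\chi_{s_1}|_{q=0}$. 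Since $\pi(I)|_{q=0}=1$ (the Poincaré polynomial has constant term $1$, coming from the identity of $W_I$), the denominator disappears. Under the identification from Corollary \ref{pres3}, each $\tilde\chi_{\{s\}}=(q+1)\tilde\upsilon_{\emptyset,\{s\}}\tilde\delta_{\{s\},\emptyset}$ corresponds to $(q+1)u_{\emptyset,\{s\}}d_{\{s\},\emptyset}$, which at $q=0$ becomes $u_{\emptyset,\{s\}}d_{\{s\},\emptyset}=1_\emptyset p_{\{s\}}1_\emptyset$, i.e. $p_{\{s\}}=u_{\emptyset,\{s\}}d_{\{s\},\emptyset}$ in the notation of (3). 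Hence $\tilde\rho_{I_\bullet}|_{q=0}=p_{\{s_l\}}\cdots p_{\{s_1\}}$ along the reduced word $w_{I_\bullet}=s_l\cdots s_1$, exactly the right-hand side of (3).

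The main obstacle, and the step deserving the most care, is verifying that no subtlety arises in the simultaneous specialisation of both sides of ($\tilde{\rm J}3$): the left-hand side $u_{\emptyset,I_1}\cdots u_{I_{m-1},I}d_{I,I_{m-1}}\cdots d_{I_1,\emptyset}$ is $q$-independent as written, but the generators $u_{I,J},d_{J,I}$ themselves are defined in Corollary \ref{pres3} through multiplication by $\frac{\pi(I)}{\pi(J)}h_{J,I}$, and one must confirm that this scalar factor and the maps $d_{J,I}$ remain well-defined integrally at $q=0$. Since $\frac{\pi(I)}{\pi(J)}h_{J,I}$ is the image of an element of $RH_q$ and $\pi(I),\pi(J)\in P$ are invertible in $R$ with value $1$ at $q=0$, the specialisation is unproblematic, and the relations (1)--(4) presented over $\mathbb{Z}$ are precisely the $q\to 0$ images of ($\tilde{\rm J}1$)--($\tilde{\rm J}4$). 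Right-exactness of $\mathbb{Z}\otimes_R-$ then guarantees that these images are a complete set of defining relations for $E_0$, completing the proof.
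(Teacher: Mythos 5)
Your proposal is correct and follows essentially the same route as the paper: the paper's entire proof is to apply the functor $\mathbb{Z}\otimes_R-$ (via $q\mapsto 0$) to the presentation in Corollary \ref{pres3}, which is exactly your argument. Your additional verification that relations (1), (2), (4) are $q$-independent, that $\pi(I)|_{q=0}=1$ kills the denominators, and that the correction terms in $\tilde\chi_{I_\bullet}$ vanish at $q=0$ so that $\tilde\rho_{I_\bullet}$ specialises to $p_{\{s_l\}}\cdots p_{\{s_1\}}$, simply makes explicit what the paper leaves to the reader.
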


The generators $u_{I, J}$, $d_{J, I}$ and $1_K$ correspond to elements $(I,\scc_{I,J},J), (J,\scc_{J,I},I)$ and $f_K$ in $\sB$. Theorem \ref{pres4} gives a presentation for the algebra $\mathbb Z\sB$ by the generators
$(I,\scc_{I,J},J), (J,\scc_{J,I},I)$ for all $I,J\in\La$ with $I\sqsubseteq J$. 

We end this section with an algorithm for writing a basis element in $\sB$ as product of these generator.
For a $W_I$-$W_J$ double coset $\fp$, let $\fp^+,\fp^-$ be the longest, shortest elements in $\fp$. For $w\in W$, let 
$$\sR(w)=\{s\in S\mid ws<w\}.$$ 
The following algorithm gives a way to write a basis element $(I,\scc_{\fp^+},J)$ as a product of generators. This algorithm is a degenerate version of the one given in the proof of \cite[Prop.~1.3.4]{GW1}. Recall the notation used in \eqref{ladmu}.

Given a double coset $(I,\fp,J)$ with $I,J\in\La$,
 let $\tilde J=\sR(\fp^+)$, $J_1=I^{\fp^-}\cap \tilde J$ and $\fp_1=W_I\fp^-W_{J_1}$. Then 
 $$\scc_{\fp^+}=\scc_{\fp_1^+}\scc_{w_{\tilde J}}=\scc_{\fp_1^+}\scc_{J_1,\tilde J}\scc_{\tilde J,J}.$$
 If $J_1=\tilde J$, then $I=\tilde J$ and $\fp^-=1$. So the algorithm stops. Otherwise,
continuing the algorithm with $(I,\fp,J)=(I,\fp_1,J_1)$ yields $\tilde J_1\in\La$ and double coset $(I,\fp_2,J_2)$ such that $J_2\subset\tilde J_1\supset J_1$ and  
$$\scc_{\fp^+}=\scc_{\fp_2^+}\scc_{J_2,\tilde J_1}\scc_{\tilde J_1,J_1}\scc_{J_1,\tilde J}\scc_{\tilde J,J}.$$
The algorithm stops if $J_2=\tilde J_1$.
In this way, we eventually find sequences $J_0=J,J_1,\ldots, J_m=I$ and $\tilde J_0,\tilde J_1,\ldots, \tilde J_{m-1}=I$ in $\La$ such that $J_i\subset \tilde J_{i-1}\supset J_{i-1}$ ($1\leq i\leq m-1$) and 
$$\scc_{\fp^+}=\scc_{I, \tilde J_{m-1}}\scc_{\tilde J_{m-1},J_{m-1}}\cdots \scc_{J_2,\tilde J_1}\scc_{\tilde J_1,J_1}\scc_{J_1,\tilde J}\scc_{\tilde J,J}.$$
Thus, we write
$(I,\scc_{\fp^+},J)$ as a product of generators.

\section{The integral case}\label{rank2}

It seems difficult to find the generating relations for the integral algebra $E_q(W)$ over $\mathbb{Z}[q]$. In this section,  we first explain
how to find torsion relations. % in addition to those given in Theorem \ref{tildeQ}.  
We then compute the integral  presentation in  the rank 2 case (see Theorem \ref{rank2relations}). A more complicated example  in the $A_3$ case is also computed  in the last section.

For notational simplicity, we will denote the Hasse quiver $\tilde Q$ in Section 5 by $\sQ$ and will drop the $\tilde{\ }$'s on all its arrows and trivial paths.\footnote{The reader should not be confused with the quiver $Q$ discussed in Section 4.} Thus, the vertex set of $\sQ$ is $\La=\Lambda(W)$ and the arrow set is
$\{\delta_{J,I},\upsilon_{I,J}\mid I,J\in\Lambda,I\sqsubset J\}$ with trivial paths $e_K$, $K\in\La$.

Let $\sQ_q=\sQ_q(W):=\mathbb Z[q]\sQ$ be the path algebra of $\sQ$ over $\mathbb Z[q]$.  For $s\in S$, let 
$$\chi_{s}= \upsilon_{\emptyset, \{s\}}\delta_{\{s\}, \emptyset}.$$
Note that the factor $(q+1)$ in $\tilde\chi_s$ is dropped here.

For any $I\in\La$ and given ordering $I_\bullet:\emptyset=I_0\sqsubset I_1\sqsubset\dots \sqsubset I_m=I,$ use $\chi_s$ to define $\chi_{I_\bullet}$ similarly as defining $\tilde\chi_{I_{\bullet}}$ in \eqref{chi_I}.

%Note that $x_K$ can be expressed as polynomial in $x_{\{i\}}$, $i\in K$. 
%Let $\chi_I=x_I\mid_{\{\chi_{\{i\}\mid i\in I}\}}$.
%Now the 2nd and 3rd relations in $\tilde{\mathcal{J}}$ can be rescaled as 
%Note that, for the same $I$ and different ordering $I_\bullet$, $\chi_{I_\bullet}$ and $I_*$,  and $\chi_{I_*}$ may be different.

Let $\sJ$ be the subset of $\sQ_q$ defined by the following relations
\begin{itemize}
\item[$({\mathcal{J}1})$] {\sf Quasi-idempotent relations:} ${\delta}_{J, I}{\upsilon}_{J, I}=\frac{\pi(J)}{\pi(I)}{e}_J.$ 
\item[$({\mathcal{J}2})$] {\sf Sandwich relations:} $$\upsilon_{I, J} \upsilon_{J, K}= \upsilon_{I, J'} \upsilon_{J', K}\text{ and } \delta_{K, J} \delta_{J, I}= \delta_{K, J'} \delta_{J', I}$$ for all $I,J,J',K\in\Lambda$ with
$I\sqsubset J\sqsubset K$ and $I\sqsubset J'\sqsubset K.$
\item[$({\mathcal{J}3})$]  {\sf Extended braid relations}: \vspace{-1ex}
$$ \upsilon_{\emptyset, I_{1}}\dots  \upsilon_{I_{m-1}, I} \delta_{I, I_{m-1}}\dots  \delta_{I_2, I_1} \delta_{I_1, \emptyset}= \chi_{I_\bullet}$$
 for all $I\in \La^*$ and given ordering $I_\bullet$.
\end{itemize}

If we put $\tilde{\delta}_{J, I}=\frac{\pi(J)}{\pi(I)}{\delta}_{J, I}$, then relations $({\mathcal{J}1})$ and $({\mathcal{J}3})$ give $({\tilde{\rm J}1})$ and $({\tilde{\rm J}3})$. Hence, in $R\sQ_q$, $\mathcal J$ generates the ideal $\langle\tilde{\mathcal J}\rangle$.

As in the proof of Corollary \ref{pres3}, we have a surjective algebra homomorphism
$$
\psi: {\sQ_q}/\langle {\mathcal{J}}\rangle \to E_q(W)
$$
and thus a short exact sequence
\begin{equation}\label{last2}
0\longrightarrow \mathcal{K}/\langle {\mathcal{J}}\rangle \longrightarrow \sQ_q/\langle {\mathcal{J}}\rangle \longrightarrow E_q(W)\longrightarrow 0,
\end{equation}
for some ideal $\mathcal K$ of $\sQ_q$.
%As $R$ is a localization of $\mathbb{Z}[q]$, it is a flat $\mathbb{Z}[q]$-module,
Applying $R\otimes _{\mathbb{Z}[q]}-$  gives the short exact sequence
\begin{equation}
0\longrightarrow R\otimes _{\mathbb{Z}[q]}  \mathcal{K}/\langle {\mathcal{J}}\rangle \longrightarrow  R\otimes _{\mathbb{Z}[q]} \big(\sQ_q/\langle {\mathcal{J}}\rangle\big) \longrightarrow
R\otimes _{\mathbb{Z}[q]} E_q(W)\longrightarrow 0.
\end{equation}
Since $R\otimes _{\mathbb{Z}[q]} \big(\sQ_q/\langle {\mathcal{J}}\rangle\big)\cong R\tilde Q/\langle {\tilde{\mathcal{J}}}\rangle$,
by Corollary \ref{pres3}, $$ R\otimes _{\mathbb{Z}[q]}  \mathcal{K}/\langle {\mathcal{J}}\rangle=0.$$ 
Hence, to give a presentation of $E_q(W)$, we need to find sufficient 
\emph{$P$-torsion elements}, %\footnote{These are really $P$-torsion elements with $P$ defined in \eqref{ring R}.} 
which are elements in  $\mathcal{K}/\langle {\mathcal{J}}\rangle$ annihilated by a polynomial in $P$. Note that $P$ is the multiplicative 
set of polynomials in $\mathbb{Z}[q]$ with the constant term $1$.
%to the ideal ${\mathcal{J}}$.
Below we explain how to find those elements.
%The following is the key to  finding some $P$-torsion elements.

For any $I\in\La$ and any given sequence $\emptyset=I_0\sqsubset I_1\sqsubset\dots \sqsubset I_m=I,$ let
$$\aligned
\upsilon_I&=\upsilon_{\emptyset,I}= \upsilon_{\emptyset, I_1}\upsilon_{I_1,I_2}\dots  \upsilon_{I_{m-1}, I},\\
\delta_I&=\delta_{I, \emptyset}= \delta_{I, I_{m-1}}\dots  \delta_{I_2, I_1} \delta_{I_1, \emptyset}.
\endaligned
$$
In general, for $I\subset J$ in $\Lambda$, let $\delta_{J, I}$ (resp., $\upsilon_{I, J}$) denote the shortest paths from $I$ (resp. $J$) to $J$ (resp. $I$). 

\begin{proposition}\label{general1}
Let $p=\sum_i f_ip_i$ be a linear combination of paths from $I$ to $J$, where each $f_i\in \mathbb{Z}[q]$. If $\upsilon_{\emptyset, J} p \delta_{I, \emptyset}=0$ in $\sQ_q/\langle {\mathcal{J}}\rangle$, then $p$ is $P$-torsion. 
Further, the ideal $ \mathcal{K}$ is generated by all such $P$-torsion elements and $\sJ$. 
\end{proposition}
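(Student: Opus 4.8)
The plan is to prove two assertions: first, that any $p$ satisfying the hypothesis $\upsilon_{\emptyset,J}\,p\,\delta_{I,\emptyset}=0$ in $\sQ_q/\langle\mathcal{J}\rangle$ is $P$-torsion; and second, that these torsion elements together with $\sJ$ generate $\mathcal{K}$. For the first part, I would work inside the localised algebra and exploit the idempotent structure coming from the relations $(\mathcal{J}1)$. The key observation is that the quasi-idempotent relation ${\delta}_{J,I}{\upsilon}_{I,J}=\frac{\pi(J)}{\pi(I)}{e}_J$ means that, after inverting the Poincar\'e polynomials (i.e. passing to $R$), the maps $\upsilon_{\emptyset,J}$ and $\delta_{I,\emptyset}$ become split injections/surjections. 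Concretely, I would write down explicit one-sided inverses: there is a path $\delta'$ with $\delta'\,\upsilon_{\emptyset,J}=e_J$ up to a unit in $R$, and similarly a path $\upsilon'$ with $\delta_{I,\emptyset}\,\upsilon'=e_I$ up to a unit in $R$. Then from $\upsilon_{\emptyset,J}\,p\,\delta_{I,\emptyset}=0$ I can recover $\pi\cdot p=0$ for a suitable product $\pi$ of Poincar\'e polynomials by left-multiplying by $\delta'$ and right-multiplying by $\upsilon'$; since every such $\pi$ lies in $P$, this shows $p$ is $P$-torsion.

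\textbf{Making the torsion explicit.}
More carefully, using the sandwich relations $(\mathcal{J}2)$ the long paths $\upsilon_{\emptyset,J}$ and $\delta_{I,\emptyset}$ are independent of the chosen chain, so I may choose convenient chains through $I$ and $J$. Iterating $(\mathcal{J}1)$ along such a chain produces an element $\delta^{\sharp}$ (a product of the $\delta_{\bullet,\bullet}$'s read in the opposite order) with $\delta^{\sharp}\,\upsilon_{\emptyset,J}=\pi(J)\,e_J$ in $\sQ_q/\langle\mathcal{J}\rangle$, and symmetrically an element $\upsilon^{\sharp}$ with $\delta_{I,\emptyset}\,\upsilon^{\sharp}=\pi(I)\,e_I$. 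Applying these to the hypothesis gives
$$
0=\delta^{\sharp}\big(\upsilon_{\emptyset,J}\,p\,\delta_{I,\emptyset}\big)\upsilon^{\sharp}
=\pi(J)\,e_J\,p\,e_I\,\pi(I)=\pi(I)\pi(J)\,p,
$$
since $p=e_J\,p\,e_I$ by the definition of a relation. As $\pi(I)\pi(J)\in P$, the element $p$ is $P$-torsion, establishing the first claim.

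\textbf{Generating $\mathcal{K}$.}
For the second claim I would argue that $\mathcal{K}/\langle\mathcal{J}\rangle$ is exactly the $P$-torsion submodule of $\sQ_q/\langle\mathcal{J}\rangle$. From the earlier observation that $R\otimes_{\mathbb{Z}[q]}\big(\mathcal{K}/\langle\mathcal{J}\rangle\big)=0$, every element of $\mathcal{K}/\langle\mathcal{J}\rangle$ is killed by some element of $P$, so $\mathcal{K}/\langle\mathcal{J}\rangle$ is contained in the $P$-torsion. Conversely, suppose $z\in\sQ_q/\langle\mathcal{J}\rangle$ is $P$-torsion; then its image in $R\otimes_{\mathbb{Z}[q]}\big(\sQ_q/\langle\mathcal{J}\rangle\big)\cong R\tilde Q/\langle\tilde{\mathcal{J}}\rangle\cong RE_q(W)$ vanishes, hence $\psi(z)=0$ in $E_q(W)$ (using that $E_q(W)$ is $\mathbb{Z}[q]$-free, so it injects into its $R$-localisation), so $z\in\mathcal{K}/\langle\mathcal{J}\rangle$. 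Thus the torsion submodule equals $\mathcal{K}/\langle\mathcal{J}\rangle$. Finally I would check that a $P$-torsion element can be normalised to have the stated form $p=\sum_i f_i p_i$ with all paths running from $I$ to $J$: decomposing any element by its source and target idempotents $e_J z\, e_I$ reduces to this case, and each homogeneous piece $e_J z\,e_I$ is itself $P$-torsion and hence, by the first part, of the required shape; this shows $\mathcal{K}$ is generated by all such $p$ together with $\sJ$.

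\textbf{Anticipated obstacle.}
I expect the main subtlety to be the converse direction in the second claim—specifically, justifying that $P$-torsion elements of $\sQ_q/\langle\mathcal{J}\rangle$ lie in $\mathcal{K}/\langle\mathcal{J}\rangle$ rather than merely that the quotient localises to zero. This rests on the exactness of localisation applied to the sequence \eqref{last2} together with the freeness (hence torsion-freeness) of $E_q(W)$ over $\mathbb{Z}[q]$, which guarantees that $E_q(W)\hookrightarrow R\otimes_{\mathbb{Z}[q]}E_q(W)$, so that an element dying in the localisation already maps to $0$ in $E_q(W)$. Verifying that $E_q(W)$ is indeed $\mathbb{Z}[q]$-torsion-free (so the injection holds) is the delicate point; everything else is bookkeeping with the relations $(\mathcal{J}1)$–$(\mathcal{J}3)$ and the idempotent decomposition.
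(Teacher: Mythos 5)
Your first part is correct and is essentially the paper's own argument: your $\delta^{\sharp}$ and $\upsilon^{\sharp}$ are exactly the composite paths $\delta_{J,\emptyset}$ and $\upsilon_{\emptyset,I}$ used in the paper, and iterating the quasi-idempotent relations $(\mathcal{J}1)$ along a chain gives $\delta_{J,\emptyset}\upsilon_{\emptyset,J}=\pi(J)e_J$, whence $\pi(I)\pi(J)p=0$ with $\pi(I)\pi(J)\in P$. Your identification of $\mathcal{K}/\langle\mathcal{J}\rangle$ with the $P$-torsion submodule of $\sQ_q/\langle\mathcal{J}\rangle$ (exactness of localisation in one direction, torsion-freeness of $E_q(W)$ in the other) is also sound, and usefully spells out what the paper leaves implicit.

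The gap is in your final step. You assert that each piece $e_Jze_I$ of an element of $\mathcal{K}$, being $P$-torsion, is ``by the first part, of the required shape.'' That is the \emph{converse} of your first part: part 1 proves that the condition $\upsilon_{\emptyset,J}\,p\,\delta_{I,\emptyset}=0$ implies $P$-torsion, whereas here you need that $P$-torsion implies $\upsilon_{\emptyset,J}\,p\,\delta_{I,\emptyset}=0$. Without this implication you only obtain the inclusion of the ideal generated by such $p$ and $\sJ$ into $\mathcal{K}$, not the claimed equality. The converse is not formal: from $fp=0$ with $f\in P$ you only get $f\cdot\bigl(\upsilon_{\emptyset,J}\,p\,\delta_{I,\emptyset}\bigr)=0$, and you must rule out $P$-torsion in the corner algebra $\bar e_\emptyset \bigl(\sQ_q/\langle\mathcal{J}\rangle\bigr)\bar e_\emptyset$ where this element lives. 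The paper closes exactly this gap: that corner algebra is isomorphic to $H_q$ (cf.\ the proof of Theorem \ref{tildeQ}), hence a free, in particular torsion-free, $\mathbb{Z}[q]$-module, so $\upsilon_{\emptyset,J}\,p\,\delta_{I,\emptyset}=0$ on the nose. This torsion-freeness of the $\emptyset$-corner is the one essential ingredient missing from your proposal; by contrast, the step you flagged as delicate (the embedding $E_q(W)\hookrightarrow R\otimes_{\mathbb{Z}[q]}E_q(W)$) is comparatively routine, since $E_q(W)=\End_{H_q}\bigl(\bigoplus_{I\in\La}x_IH_q\bigr)$ is free over $\mathbb{Z}[q]$.
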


\begin{proof}
As $\delta_I\upsilon_I=\pi(I){e}_I$ for any $I\in \Lambda$, then in $\sQ_q/\langle {\mathcal{J}}\rangle$
$$
\pi(J)\pi(I)p= \delta_{J}\upsilon_{J} p \delta_{I}\upsilon_{I}=0.
$$
So $p$ is $P$-torsion.

Now suppose that $p=\sum_i f_ip_i$ is $P$-torsion, we may assume that all the paths $p_i$ start from $I$ and end at $J$. Then there is a  polynomial $f\in R$ such that $fp=0$ and 
$$
\upsilon_J f p \delta_I=0 \in \bar e_\emptyset \big(\sQ_q/\langle {\mathcal{J}}\rangle\big) \bar e_\emptyset.
$$
Note that (cf. the proof of Theorem \ref{tildeQ})
$$
\bar e_\emptyset \big(\sQ_q/\langle {\mathcal{J}}\rangle\big) \bar e_\emptyset \cong H_q,
$$
which is a free $\mathbb{Z}[q]$-module. 
Therefore $\upsilon_J  p \delta_I=0$. This says that $p$ is $P$-torsion as described and so 
the ideal $\mathcal{K}$ is generated by all such $P$-torsion elements and $\sJ$.
\end{proof}

The $P$-torsion elements in $\sQ_q/\langle {\mathcal{J}}\rangle$ give us new relations. We call them {\it torsion relations}. 

%\begin{example} The following are rank 2 torsion relations from Theorem \ref{rank2relations}.
%$$\aligned
%(\text{T}1) \;& \delta_{\{1\}, \emptyset}\chi_2 \upsilon_{\emptyset, \{1\}}=  \upsilon_{\{1\}\{1, 2\}} \,\delta_{\{1, 2\}, \{1\}}+qe_{\{1\}}; \\
%
%(\text{T}2)\; & \delta_{\{1\}, \emptyset} \upsilon_{\emptyset, \{3\}}=\upsilon_{\{1\}, \{1, 3\}} \,\delta_{\{1, 3\}, \{3\}};\\
%
%\endaligned$$
%Further, by (T1), (T3) and the sandwich relations we also have the following torsion relations
%$$ \delta_{\{1\}, \emptyset}\chi_2\chi_1 \upsilon_{\emptyset, \{2\}}= (q+1) \upsilon_{\{1\},\{1, 2\}} \,\delta_{\{1, 2\}, \{2\}} +
 %q\delta_{\{1\}, \emptyset} \upsilon_{\emptyset, \{2\}}$$and $$
% \delta_{\{1\}, \emptyset} \chi_3 \upsilon_{\emptyset, \{1\}}=(q+1)\upsilon_{\{1\}, \{1, 3\}} \,\delta_{\{1, 3\}, \{1\}}.
%$$\end{example}

\begin{lemma}\label{pathsviathebottom}
If $(W, S)$ is  finite, then any path from $I$ to $S$ (resp. from $S$ to $I$) is a multiple of  $\delta_{I,S}$ (resp. $\upsilon_{S,I}$).
\end{lemma}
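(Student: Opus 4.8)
The plan is to exploit the fact that, because $(W,S)$ is finite, $S$ itself is finitary, so $S\in\La$ is the unique maximal element of the poset $\La$ and every interval $[I,S]$ is Boolean. I would write $\delta_{S,I}$ for the shortest (monotone) up-path from $I$ to $S$ and $\upsilon_{I,S}$ for the shortest down-path from $S$ to $I$, and prove the first assertion by induction on the length $\ell$ of a path $p$ from $I$ to $S$. The point is that the relations in $\mathcal J$ supply exactly two moves: the sandwich relations $(\mathcal J2)$ make $\delta_{S,I}$ independent of the chosen maximal chain (any two chains in the Boolean interval are linked by diamond moves), and more generally let any shortest up-path $\delta_{S,M'}$ be refactored as $\delta_{S,M}\delta_{M,M'}$ whenever $M'\subset M\subseteq S$; while the quasi-idempotent relations $(\mathcal J1)$ contract a one-step valley, $\delta_{M,M'}\upsilon_{M',M}=c_{M,M'}e_M$ with $c_{M,M'}=\pi(M)/\pi(M')\in\mathbb Z[q]$.

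For the base case, note that a path with $a$ up-steps and $b$ down-steps from $I$ to $S$ satisfies $\ell=a+b$ and $a-b=|S|-|I|$, so $\ell\geq|S|-|I|$ with equality exactly for the monotone paths; these all equal $\delta_{S,I}$ by the sandwich relations. For the inductive step, given $\ell>|S|-|I|$ there is at least one down-step, and I would single out the \emph{last} one, say $\upsilon_{M',M}$ (from $M$ down to $M'$, $M'\sqsubset M$). Everything after it is an up-run, necessarily monotone, hence equal to $\delta_{S,M'}$, so $p=\delta_{S,M'}\,\upsilon_{M',M}\,r$ with $r$ a path $I\to M$. Refactoring $\delta_{S,M'}=\delta_{S,M}\delta_{M,M'}$ and then applying the quasi-idempotent relation gives $\delta_{S,M'}\upsilon_{M',M}=c_{M,M'}\,\delta_{S,M}$, whence $p=c_{M,M'}\,\delta_{S,M}r$. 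Now $\delta_{S,M}r$ is again a path $I\to S$, but of length $\ell-2$, so the induction hypothesis writes it as a $\mathbb Z[q]$-multiple of $\delta_{S,I}$, and therefore so is $p$.

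The dual statement, that every path $S\to I$ is a multiple of $\upsilon_{I,S}$, I would obtain either by the symmetric induction — now isolating the \emph{first} up-step and contracting the valley on its left by the same two moves — or, more economically, by applying the anti-involution $\tau$ of \eqref{tau} that interchanges the $\upsilon$'s and $\delta$'s. For the latter one checks that $\tau$ preserves $\langle\mathcal J\rangle$: it fixes each quasi-idempotent relation, swaps the two families of sandwich relations, and fixes each extended braid relation because under the identification of the bottom corner with $H_q$ the relevant element $x_I$ is invariant under the anti-automorphism $\iota$. Hence $\tau$ carries the first assertion to the second.

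I expect the only real obstacle to be the refactoring step: one must be sure that the trailing up-run $\delta_{S,M'}$, which a priori reaches $S$ from the valley bottom $M'$, can be rerouted through the vertex $M$ lying one level above $M'$, so that the quasi-idempotent relation actually applies. This is precisely where the finiteness of $W$ enters — it guarantees $M\subseteq S$, so $M$ lies in the interval $[M',S]$ and the sandwich relations legitimately yield $\delta_{S,M'}=\delta_{S,M}\delta_{M,M'}$; without a top vertex $S$ there would be no such top to reroute toward, and the statement would fail.
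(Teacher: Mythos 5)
Your proof is correct and takes essentially the same approach as the paper: the paper's entire proof is the one-line assertion that the lemma ``follows from the sandwich relations and the quasi-idempotent relations,'' and your induction --- rerouting the trailing monotone run through the top of the last valley via the sandwich relations, then contracting that valley with a quasi-idempotent relation to shorten the path by two --- is precisely the fleshed-out version of that argument. The dual statement via the anti-involution $\tau$ (or the mirrored induction) is consistent with the symmetry the paper leaves implicit, so there is nothing to correct.
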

\begin{proof}
The proof follows from the sandwich relations and the quasi-idempotent relations.
\end{proof}

We now look for the torsion relations for dihedral Hecke algebras $\mathbb Z[q]$-algebra $E_q(I_n)$. 
In this case, Elias gave an recursive presentation of the Hecke endomorphism algebras in \cite[Prop.~2.20]{Elias} over $\mathbb{Z}[v, v^{-1}]$, where $v^2=q$. 
We work over $\mathbb{Z}[q]$ and can write down explicit generating relations.  
Our method is different  from Elias'. 

Let $S=\{1,2\}$. Note that the Hasse quiver $\mathcal{Q}$ has the form: %(We have dropped the $\tilde{}$ one every edge.)
$$\mathcal{Q}:\xymatrix{
&& \emptyset  \ar@/^/[dll]^{\delta_{\{1\}}} \ar@/_/[drr]_{\delta_{\{2\}}}&\\
\{1\} \ar@/^/[drr]^{\delta_{\{1,2\}, \{1\}}}  \ar@/^/[urr]^{\upsilon_{\{1\}}} &&&&
\{2\} \ar@/_/[dll]_{\delta_{\{1,2\}, \{2\}}}  \ar@/_/[ull]_{\upsilon_{\{2\}}} \\&
&\{1, 2\} \ar@/_/[urr]_{\upsilon_{\{2\}, \{1, 2\}}}\ar@/^/[ull]^{\upsilon_{\{1\}, \{1, 2\}}} &
}
$$
%We will use Williamson's version of the generators in Corollary \ref{tildeQ} (or rather Corollary \ref{newgenerators2}). Let  $\chi_{\{i\}}:=\upsilon_{\{i\}}\delta_{\{i\}}$ for $i=1, 2$, and l

Let $\chi_{1[j]}$ and $\chi_{2[j]}$ be monomials in $\chi_{i}$, defined in a similar way as $x_{1[j]}$ and $x_{2[j]}$.
Let
$\chi_{(1, 2)}^{(m)}$ be the  polynomial in the new $\chi_{1}$ and $\chi_{2}$, as given in Lemma \ref{lastlemma}, 
and $\chi_{(2, 1)}^{(m)}$ the polynomial obtained from $\chi_{(1, 2 )}^{(m)}$ by swapping the indices $1$ and $2$. Thus, \eqref{equation} and its counterpart for (2,1) can be combined into the following: for $(s,t)=(2,1)$ or $(1,2)$,
$$
\chi^{(m)}_{(s,t)}=\sum_{j=1}^{m}b_j^m\chi_{[j]t}=\sum_{i=0}^{\lfloor \frac{m-1}{2}\rfloor}{m-i-1\choose i}(-q)^{i}\chi_{[m-2i]t}.$$
%\, \text{ and }\,\chi^{(m)}_{(1, 2)}=\sum_{i=0}^{t}{m-i-1\choose i}(-q)^{i}\chi_{[m-2i]2}.$$
%Let $$\mathcal{Q}_q=\mathbb Z[q]\mathcal{Q}$$ be the path algebra of $\mathcal{Q}$.

\begin{theorem}\label{rank2relations} %Maintain the notation above.
The $\mathbb Z[q]$-algebra $E_q(I_n)$ is isomorphic to the quotient algebra 
of the path algebra $\mathcal{Q}_q$ modulo the following relations $\sJ_n$.
\begin{itemize}
\item[(1)] {\sf Quasi-idempotent relations:} for $i=1,2$,
\begin{itemize}
\item[(i)] $\delta_{\{i\}}\upsilon_{\{i\}}=(1+q)e_{\{i\}}$;
\item[(ii)] $\delta_{\{1, 2\}, {\{i\}}}\upsilon_{{\{i\}}, \{1, 2\}}=(1+q+\cdots+q^{n-1})e_{\{1, 2\}}$.
\end{itemize}
\item[(2)] {\sf Sandwich relations: }
\begin{itemize}
\item[(i)]$\upsilon_{\{1\}}\upsilon_{\{1\}, \{1,2\}}=\upsilon_{\{2\}}\upsilon_{\{2\}, \{1,2\}}$ 
\item[(ii)]$\delta_{\{1,2\}, \{1\}}\delta_{\{1\}}=\delta_{\{1,2\}, \{2\}}\delta_{\{2\}}$;
\end{itemize}
\item[(3)]  {\sf Refined braid relations:} for $(s, t)=(1, 2)$ or $(2, 1)$. 
\begin{itemize}
\item[(i)] $\upsilon_{\{t\}, \{1,2\}}\delta_{\{1,2\}, \{s\}}=\delta_{\{t\}} \big(\sum_{j=2}^n b_j^n\chi_{[j-2]t}\big)\upsilon_{\{s\}}$, where $\chi_{[0]t}=e_\emptyset$, if $n$ is even;
\item[(ii)] $\upsilon_{\{s\}, \{1,2\}}\delta_{\{1,2\}, \{s\}}=\delta_{\{s\}} \big(\sum_{j=3}^n b_j^n\chi_{[j-2]t}\big)\upsilon_{\{s\}}+(-q)^{\frac{n-1}{2}}e_{\{s\}}$, if $n$ is odd.
\end{itemize}
\end{itemize}\end{theorem}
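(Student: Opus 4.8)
The plan is to build on the general framework of this section. By the exact sequence \eqref{last2} we already have a surjection $\psi:\sQ_q/\langle\mathcal{J}\rangle\to E_q(I_n)$ whose kernel $\mathcal K/\langle\mathcal{J}\rangle$ is $P$-torsion, and by Proposition \ref{general1} this kernel is generated, together with $\mathcal{J}$, by the $P$-torsion elements, namely those paths $p$ from $I$ to $J$ with $\upsilon_{\emptyset,J}\,p\,\delta_{I,\emptyset}=0$ in $\bar e_\emptyset(\sQ_q/\langle\mathcal{J}\rangle)\bar e_\emptyset\cong H_q$. Accordingly I would reduce the theorem to two assertions: first, that each refined braid relation in (3) is a valid relation, i.e.\ is $P$-torsion; and second, that these relations exhaust the torsion. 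Once the first holds, the refined braid relations lie in $\mathcal K$, so $R\otimes_{\mathbb Z[q]}\mathcal K/\langle\mathcal{J}\rangle=0$ forces them to be redundant over $R$, giving $R\otimes_{\mathbb Z[q]}(\sQ_q/\langle\sJ_n\rangle)\cong R\tilde Q/\langle\tilde{\mathcal J}\rangle\cong RE_q(I_n)$ by Corollary \ref{pres3}. For the second I would show that $\sQ_q/\langle\sJ_n\rangle$ is $P$-torsion-free: then $\psi$ descends to $\bar\psi:\sQ_q/\langle\sJ_n\rangle\to E_q(I_n)$, torsion-freeness makes $\sQ_q/\langle\sJ_n\rangle\hookrightarrow RE_q(I_n)$, its image lands in $E_q(I_n)$ and still surjects onto it, and hence $\bar\psi$ is the desired isomorphism.

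To check validity I would apply the criterion of Proposition \ref{general1} to the difference $p$ of the two sides of each relation in (3), using the identification $\chi_s\leftrightarrow x_s=1+T_s$ in $H_q$. Capping the left-hand side $\upsilon_{\{t\},\{1,2\}}\delta_{\{1,2\},\{s\}}$ (resp.\ $\upsilon_{\{s\},\{1,2\}}\delta_{\{1,2\},\{s\}}$) by $\upsilon_{\emptyset,\cdot}$ on the left and $\delta_{\cdot,\emptyset}$ on the right, and applying the sandwich relations $(\mathcal J2)$ and the definitions of $\upsilon_{\emptyset,\{1,2\}},\delta_{\{1,2\},\emptyset}$, collapses it to $\upsilon_{\emptyset,\{1,2\}}\delta_{\{1,2\},\emptyset}$, which by the extended braid relation $(\mathcal J3)$ equals $\chi_{\{1,2\}_\bullet}\leftrightarrow x_{\{1,2\}}=x^{(n)}_{(s,t)}$. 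Capping the right-hand side gives $\chi_t\big(\sum_j b^n_j\chi_{[j-2]t}\big)\chi_s\leftrightarrow x_t\big(\sum_{j=2}^n b^n_j x_{[j-2]t}\big)x_s$ when $n$ is even, and the analogous $x_s\big(\sum_{j=3}^n b^n_j x_{[j-2]t}\big)x_s+(-q)^{(n-1)/2}x_s$ when $n$ is odd. Thus validity reduces to the two identities in $H_q$ expressing $x^{(n)}_{(s,t)}$ in these factored forms, which I would verify by a direct computation from the explicit coefficients $b^n_j$ of Lemma \ref{lastlemma} (the even/odd dichotomy matching the parity constraint $b^n_j=0$ for $n-j$ odd there, and the extra term $(-q)^{(n-1)/2}$ being the coefficient $b^n_1$ of $x_{[1]t}$).

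For sufficiency I would establish a normal form for paths modulo $\sJ_n$. Since $I_n$ is finite, Lemma \ref{pathsviathebottom} shows that every path into (resp.\ out of) the top vertex $\{1,2\}$ is a scalar multiple of the canonical shortest path $\delta_{\{1,2\},\cdot}$ (resp.\ $\upsilon_{\cdot,\{1,2\}}$); together with the sandwich relations this forces any reduced path to visit $\{1,2\}$ only through a single up-then-down passage, which the refined braid relations (3) rewrite as a path through the bottom vertex $\emptyset$ carrying a loop in $\bar e_\emptyset(\cdot)\bar e_\emptyset\cong H_q$ between a $\delta_{\{t\},\emptyset}$ and a $\upsilon_{\emptyset,\{s\}}$. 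Iterating, every element of $\sQ_q/\langle\sJ_n\rangle$ becomes a $\mathbb Z[q]$-combination of standard paths that avoid $\{1,2\}$ and otherwise reduce to the bottom corner, where the monomials $x_{[j]1}$ form a free $\mathbb Z[q]$-basis. Matching these standard paths with the double-coset basis of $E_q(I_n)$ and checking that $\bar\psi$ sends them to $R$-linearly independent elements of $RE_q(I_n)$ shows they are a $\mathbb Z[q]$-basis; in particular $\sQ_q/\langle\sJ_n\rangle$ is torsion-free and $\bar\psi$ is an isomorphism. The main obstacle is precisely this last step: organising all detours through $\{1,2\}$ into the normal form and verifying a rank count matching $E_q(I_n)$, so as to be certain that the refined braid relations, and nothing further, are needed to annihilate the torsion.
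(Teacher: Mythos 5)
Your validity half is fine and is only cosmetically different from the paper's: the paper checks the three families of relations directly as endomorphisms (both sides of a refined braid relation act as $x_{\{s\}}\mapsto x_{\{1,2\}}$), whereas you cap with $\upsilon_{\emptyset,\cdot}$, $\delta_{\cdot,\emptyset}$ and invoke Proposition \ref{general1}; either way the computation reduces, via Lemma \ref{lastlemma}, to the identity $x_t\bigl(\sum_{j=2}^{n}b_j^n x_{[j-2]t}\bigr)x_s=\sum_{j}b_j^n x_{[j]s}=x_{(t,s)}^{(n)}=x_{\{1,2\}}$ in $H_q$ for even $n$, and to the analogous identity for odd $n$ (where the extra term $(-q)^{(n-1)/2}e_{\{s\}}$ supplies the missing summand $b_1^nx_{[1]s}$).

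The genuine gap is in the injectivity step, exactly where you locate the ``main obstacle'', and your sketch of it would fail rather than merely need elaboration. You assert that the refined braid relations rewrite \emph{every} up-then-down passage through the top vertex $\{1,2\}$ as a path through $\emptyset$, so that the standard paths avoid $\{1,2\}$. Over $\mathbb Z[q]$ this is false, and the failure is precisely the torsion phenomenon the theorem is designed to capture: for even $n$, relation (3)(i) rewrites only the cross hooks $\upsilon_{\{t\},\{1,2\}}\delta_{\{1,2\},\{s\}}$ with $s\neq t$, while the same-endpoint hook $\upsilon_{\{s\},\{1,2\}}\delta_{\{1,2\},\{s\}}$ is \emph{not} a $\mathbb Z[q]$-combination of paths through $\emptyset$ --- by Lemma \ref{torsion}(a) only $(q+1)$ times it is, and dividing by $q+1$ is exactly the forbidden operation --- so it must be kept as a standard path (it realises the endomorphism $x_{\{s\}}h\mapsto x_{\{1,2\}}h$, i.e.\ the double coset of $w_0$). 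For odd $n$ the parity is reversed: (3)(ii) eliminates the same-endpoint hooks and the cross hooks survive. This is why the paper's spanning sets $B_{J,I}$ between the rank-one vertices each retain one hook through $\{1,2\}$, and why the paper's reduction runs in the \emph{opposite} direction to yours: Lemma \ref{torsion} is used inductively to rewrite the long bottom loops $\delta_{\{1\}}\chi_{[l]2}\upsilon_{\{1\}}$, $l\geq n-1$ odd, as $(q+1)^k$-multiples of the hook plus shorter loops, not to remove the hook. With the hooks omitted your proposed set cannot span (equivalently the count against $\operatorname{rank}\,\Hom_{H_q}(x_IH_q,x_JH_q)$ fails), so neither torsion-freeness nor injectivity of $\bar\psi$ follows. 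A smaller secondary gap: your identification $R\otimes_{\mathbb Z[q]}(\sQ_q/\langle\sJ_n\rangle)\cong R\tilde Q/\langle\tilde{\mathcal J}\rangle$ needs both inclusions of ideals, and you argue only that the refined braid relations are redundant over $R$; the converse (that the extended braid relations follow from $\sJ_n$) needs Remark \ref{pre-braid} together with the observation that the $\chi_s$ then satisfy the defining relations of $H_q$, so the two polynomial expressions for $x_{\{1,2\}}$ agree when evaluated at the $\chi_s$.
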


\begin{remark}\label{pre-braid}
Note that the braid relation $\chi_{(1, 2 )}^{(m)}=\chi_{(2,1 )}^{(m)}$ in Lemma \ref{NewpreHecke} can be easily derived by multiplying the refined braid relations with $\upsilon_{\{t\}}$ (or $\upsilon_{\{s\}}$) on the left and $\delta _{\{s\}}$ on the right and applying the sandwich relations. 
\end{remark}

\begin{lemma}\label{torsion} The following relations hold in $\mathcal{Q}_q/\langle \mathcal{J}_n\rangle$: for $(s, t)=(1, 2)$ or $(2, 1)$. 
\begin{itemize}
\item[(a)] $(q+1)\upsilon_{\{s\}, \{1,2\}}\delta_{\{1,2\}, \{s\}}=\delta_{\{s\}} (\sum_{j=1}^n b_j^n\chi_{[j-1]t})\upsilon_{\{s\}}$, if $n$ is even;
\item[(b)] $(q+1)\upsilon_{\{t\}, \{1,2\}}\delta_{\{1,2\}, \{s\}}=\delta_{\{t\}} (\sum_{j=1}^n b_j^n\chi_{[j-1]t})\upsilon_{\{s\}}$,  if $n$ is odd,
\end{itemize}
where $\chi_{[0]t}=e_\emptyset$.
\end{lemma}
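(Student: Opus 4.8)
The plan is to deduce both identities directly from the defining relations $\mathcal J_n$ of Theorem~\ref{rank2relations}: the refined braid relations (3), the sandwich relations (2), and the quasi-idempotent relations (1). No appeal to the extended braid relations over $R$ is needed. Since interchanging the labels $1$ and $2$ swaps the two choices of $(s,t)$ and carries each relation of $\mathcal J_n$ to its partner, it suffices to establish (a) and (b) for one fixed ordered pair $(s,t)$. Throughout I write $\chi_i=\upsilon_{\{i\}}\delta_{\{i\}}$ and use that, by Lemma~\ref{lastlemma}, $b_j^n=0$ unless $j\equiv n\pmod 2$.

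For (a), assume $n$ is even and start from the refined braid relation (3)(i),
$$\upsilon_{\{t\},\{1,2\}}\delta_{\{1,2\},\{s\}}=\delta_{\{t\}}\Big(\sum_{j=2}^n b_j^n\chi_{[j-2]t}\Big)\upsilon_{\{s\}}.$$
Multiplying on the left by $\upsilon_{\{t\}}$ and then invoking the sandwich relation $\upsilon_{\{t\}}\upsilon_{\{t\},\{1,2\}}=\upsilon_{\{s\}}\upsilon_{\{s\},\{1,2\}}$ turns the left side into $\upsilon_{\{s\}}\upsilon_{\{s\},\{1,2\}}\delta_{\{1,2\},\{s\}}$ and the right side into $\chi_t\big(\sum_{j=2}^n b_j^n\chi_{[j-2]t}\big)\upsilon_{\{s\}}$. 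Multiplying once more on the left by $\delta_{\{s\}}$ and using $\delta_{\{s\}}\upsilon_{\{s\}}=(1+q)e_{\{s\}}$ produces the factor $q+1$. It then remains to identify
$$\chi_t\sum_{j=2}^n b_j^n\chi_{[j-2]t}=\sum_{j=1}^n b_j^n\chi_{[j-1]t}$$
as an identity among paths at the vertex $\emptyset$. This is the combinatorial core: only even $j$ survive, so the leftmost letter of $\chi_{[j-2]t}$ is $\chi_s$ (or $\chi_{[j-2]t}=e_\emptyset$ when $j=2$), whence prepending $\chi_t$ incurs no collision and gives $\chi_t\chi_{[j-2]t}=\chi_{[j-1]t}$; the $j=1$ term on the right vanishes because $b_1^n=0$ when $n$ is even. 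Substituting yields (a).

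For (b), assume $n$ is odd and start instead from (3)(ii),
$$\upsilon_{\{s\},\{1,2\}}\delta_{\{1,2\},\{s\}}=\delta_{\{s\}}\Big(\sum_{j=3}^n b_j^n\chi_{[j-2]t}\Big)\upsilon_{\{s\}}+(-q)^{\frac{n-1}{2}}e_{\{s\}}.$$
Now I multiply on the left by $\upsilon_{\{s\}}$, apply the sandwich relation to pass to $\upsilon_{\{t\}}\upsilon_{\{t\},\{1,2\}}$, and then multiply by $\delta_{\{t\}}$, again using $\delta_{\{t\}}\upsilon_{\{t\}}=(1+q)e_{\{t\}}$ to produce $q+1$. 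The one delicate point, and the only place where the odd case genuinely differs, is the surviving constant term: it contributes $(-q)^{\frac{n-1}{2}}\delta_{\{t\}}\upsilon_{\{s\}}=\delta_{\{t\}}\big((-q)^{\frac{n-1}{2}}e_\emptyset\big)\upsilon_{\{s\}}$, which is exactly the $j=1$ summand $b_1^n\chi_{[0]t}$ of the target, since $b_1^n=(-q)^{\frac{n-1}{2}}$ by Lemma~\ref{lastlemma}. Combining it with $\chi_s\sum_{j=3}^n b_j^n\chi_{[j-2]t}=\sum_{j\ge 3} b_j^n\chi_{[j-1]t}$ (again a collision-free prepending, as the leftmost letter of $\chi_{[j-2]t}$ is $\chi_t$ for odd $j$) reassembles $\sum_{j=1}^n b_j^n\chi_{[j-1]t}$ and gives (b). The main obstacle throughout is not conceptual but bookkeeping: tracking the parity constraint on $j$, matching the shifted index ranges of the two sums, and verifying that the prepended generator never collides with the leading letter of $\chi_{[j-2]t}$.
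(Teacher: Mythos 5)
Your proposal is correct and follows essentially the same route as the paper: both derive (a) from the quasi-idempotent relation $\delta_{\{s\}}\upsilon_{\{s\}}=(q+1)e_{\{s\}}$, the sandwich relation $\upsilon_{\{s\}}\upsilon_{\{s\},\{1,2\}}=\upsilon_{\{t\}}\upsilon_{\{t\},\{1,2\}}$, and the refined braid relation (3)(i), differing only in that you multiply both sides of (3)(i) by $\upsilon$ and $\delta$ while the paper inserts $\delta_{\{s\}}\upsilon_{\{s\}}$ into the left-hand side of (a). Your explicit treatment of the parity bookkeeping ($b_j^n=0$ unless $j\equiv n\pmod 2$, the collision-free prepending $\chi_t\chi_{[j-2]t}=\chi_{[j-1]t}$, and the identification of the constant term $(-q)^{\frac{n-1}{2}}e_{\{s\}}$ with the $j=1$ summand) and of case (b), which the paper leaves to the reader, fills in steps the paper's proof keeps implicit.
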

\begin{proof} We only prove (a). By the quasi-idempotent relation and the sandwich relation, the left hand side 
$$\text{LHS}=(\delta_{\{s\}}\upsilon_{\{s\}})\upsilon_{\{s\}, \{1,2\}}\delta_{\{1,2\}, \{s\}}=\delta_{\{s\}}\upsilon_{\{t\}}(\upsilon_{\{t\}, \{1,2\}}\delta_{\{1,2\}, \{s\}}).$$
Now, applying (3)(i) yields (a).
\end{proof}

%The second relations in (3) and (4) are redundant, they can be deduced from the first relations, respectively. We display the relations to make the choice of the spanning sets in the proof  more transparent. 

\begin{proof}[Proof of Theorem \ref{rank2relations}]
Let $\phi:\mathcal{Q}_q/\langle \mathcal{J}_n\rangle \to E_q(I_n)$ be the algebra homomorphism given by sending, for all $I,J,K\in\Lambda$ with $I\sqsubset J$, $e_K$ to the identity map $1_K$ on $x_KH_q$,
$\delta_{J,I}$ to the map $d_{J,I}'=\frac{\pi(J)}{\pi(I)}d_{J, I}:x_IH_q\to x_JH_q$ (see Remark \ref{d'}) and $\upsilon_{I,J}$ to the inclusion map $u_{I,J}:x_JH_q\to x_IH_q$. Then  $x_{\{i\}}=u_{\emptyset,\{i\}}d_{\{i\},\emptyset}': H_q\rightarrow H_q$ is given by  
right multiplication with $x_{\{i\}}$.
Direct  computation shows that  all three sets of relations are satisfied if $e_K$, $\delta_{J,I}$ and $\upsilon_{I,J}$ are replaced by $1_K, d'_{J, I}$ and $u_{I, J}$. For example, both sides of the refined braid relations become the maps sending $x_{\{s\}}$ to $x_{\{1,2\}}$. 
So the map $\phi$ is indeed a well-defined algebra homomorphism.
%$${\phi}: \mathcal{Q}_q/\langle \mathcal{J}_n\rangle \rightarrow S_q(I_n).$$
Note that $1_K, u_{I, J} $ and $d'_{J, I}$ for all $K, I, J\in \Lambda$ with $I\subset J$  generate the $\mathbb{Z}[q]$- algebra 
$E_q(I_n)$ \cite[Prop.~2.11]{GW}. Further, for any $I\subset J$, $u_{I, J}$ and $d'_{J, I}$ are  
monomials in $u_{K, L}$ and $d'_{L, K}$ with $K\sqsubset L$. So $\phi$ is surjective.

We show now that ${\phi}$ is injective for {\it even} $n$, the odd case can be done similarly. 
Since $\bar e_\emptyset \big(\mathcal{Q}_q/\langle \mathcal{J}_n\rangle\big )\bar e_{\emptyset}\cong H_q$, it suffices by Lemma \ref{pathsviathebottom} and symmetry, to prove that the following sets $B_{J,I}$ span the space of  paths from $I$ to $J$ in $\mathcal{Q}_q/\langle \mathcal{J}_n\rangle$,
%from $\emptyset$ to $\{1\}$,  from $\{1\}$ to $\{1\}$ and $\{2\}$, respectively. 
$$\aligned
&B_{\{1\},\emptyset}= \{\delta_{\{1\}} \chi_{2[j]}\mid 0\leq j\leq n-1\};\\
%&B_{\emptyset \{1, 2\}}=\{\delta_{\{1, 2\}, \{1\}}\delta_{\{1\}}\};\\
&B_{\{1\},\{1\}}=\{ e_{\{1\}},  \upsilon_{\{1\}, \{1, 2\}}\delta_{\{1, 2\}, \{1\}}\}\cup \{ \delta_{\{1\}}\chi_{[2j-1]2}\upsilon_{\{1\}}\mid 1\leq j\leq \frac n2-1\};\\
&B_{\{2\},\{1\}}=\{ \delta_{\{2\}}\upsilon_{\{1\}},  \upsilon_{\{2\}, \{1, 2\}}\delta_{\{1, 2\}, \{1\}}\}\cup \{ \delta_{\{2\}}\chi_{[2j-2]2}\upsilon_{\{1\}}\mid 2\leq j\leq \frac n2-1\}.
\endaligned$$
Note that the cardinality of $B_{J,I}$ is the same as the cardinality of the double coset $W_J\backslash I_n/ W_I$, i.e. the rank of the homomorphism space from $x_IH_q$ to $x_JH_q$, 
as a free $\mathbb{Z}[q]$-module.  So
we have the injectivity. 

We now prove the claim for the set $B_{\{1\},\{1\}}$, the others can be done similarly. By Lemma \ref{pathsviathebottom}, any path from $\{1\}$ to $\{1\}$ via $\{1, 2\}$ 
is a multiple of $\upsilon_{\{1\}, \{1, 2\}}\delta_{\{1, 2\}, \{1\}}.$  So any path from  $\{1\}$ to $\{1\}$ 
is a multiple of a path 
$ e_{\{1\}},  \upsilon_{\{1\}, \{1, 2\}}\delta_{\{1, 2\}, \{1\}}$ or $\delta_{\{1\}}\chi_{[j]s}\upsilon_{\{1\}}$, where $j\geq 1$ and $s=1$ or $2$. 
When $s=1$, or  $j$ is even and $s=2$, by the quasi-idempotent relations in (1), $\delta_{\{1\}}\chi_{[j]s}\upsilon_{\{1\}}$ is equal to a multiple of  $\delta_{\{1\}}\chi_{[l]2}\upsilon_{\{1\}}$ with $l<j$.  
So we need only to prove that 
$\delta_{\{1\}}\chi_{[l]2}\upsilon_{\{1\}}$, for {\it odd} numbers $l\geq n-1$, is a linear combination of the paths in $B_{\{1\},\{1\}}$. Indeed, by the relation in Lemma \ref{torsion}(a),
$$\aligned
&\delta_{\{1\}}\chi_{[n-1]2}\upsilon_{\{1\}}
%=&\delta_{\{1\}}\chi_{[n-1]2}\chi_{\{1\}}\chi_{\{2\}}\upsilon_{\{1\}}\\
%=&(\delta_{\{1\}}\chi_{[n-1]2}\upsilon_{\{1\}})  (\delta_{\{1\}} \chi_{\{2\}} \upsilon_{\{1\}})\\
=(q+1)\upsilon_{\{1\}, \{1,2\}}\delta_{\{1,2\}, \{1\}} - \delta_{\{1\}} \bigg(\sum_{j=1}^{n-2} b_j^n\chi_{[j-1]2}\bigg)\upsilon_{\{1\}} \in \text{ span}(B_{\{1\},\{1\}})\\%(\delta_{\{1\}} \chi_{\{2\}} \upsilon_{\{1\}})\\
\endaligned
$$
and, for odd $l\geq n+1$,
$$\aligned
\delta_{\{1\}}\chi_{[l]2}&\upsilon_{\{1\}}=\big(\delta_{\{1\}}\chi_{[n-1]2}\upsilon_{\{1\}}\big)\delta_{\{1\}}  \chi_{[l-n]2}\upsilon_{\{1\}}\\
&=(q+1)\upsilon_{\{1\}, \{1,2\}}\delta_{\{1,2\}, \{1\}} \delta_{\{1\}}    \chi_{[l-n]2} \upsilon_{\{1\}}- \delta_{\{1\}} \bigg(\sum_{j=1}^{n-2} b_j^n\chi_{[j-1]2}\bigg)\upsilon_{\{1\}}\delta_{\{1\}}    \chi_{[l-n]2} \upsilon_{\{1\}}\\
&=(q+1)\upsilon_{\{1\}, \{1,2\}}\delta_{\{1,2\}, \{2\}} \delta_{\{2\}} \chi_{\{2\}} \chi_{[l-n-1]2}  \upsilon_{\{1\}}- \delta_{\{1\}} \bigg(\sum_{j=1}^{n-2} b_j^n\chi_{[l+j-n]2}\bigg)\upsilon_{\{1\}}.\\
\endaligned
$$
Further, repeatedly applying $\delta_{\{s\}}\chi_{\{s\}}=(q+1)\delta_{\{s\}}$ and  the downward sandwich relation on 
$\upsilon_{\{1\}, \{1,2\}}\delta_{\{1,2\}, \{2\}} \delta_{\{2\}} \chi_{\{2\}} \chi_{[l-n-1]2}  \upsilon_{\{1\}}$
leads to
$$
\aligned
\delta_{\{1\}}\chi_{[l]2}\upsilon_{\{1\}}   
=(q+1)^{l-n+2}\upsilon_{\{1\}, \{1,2\}}\delta_{\{1,2\}, \{1\}}-\delta_{\{1\}} \bigg(\sum_{j=1}^{n-2} b_j^n\chi_{[l+j-n]2}\bigg)\upsilon_{\{1\}}.\\
\endaligned
$$
So, by induction, $\delta_{\{1\}}\chi_{[l]2}\upsilon_{\{1\}}$ is a linear combination of the paths in $B_{\{1\},\{1\}}$, as required. 
Therefore, we can conclude that the two algebras are isomorphic.
\end{proof}

\section{The integral case of type $A_3$}\label{generalcase}

In this section, we will compute the torsion relations required for presenting $E_q(\fS_4)$.
Recall in this case $S=\{1, 2,3\}$, 
the Hasse quiver  $\sQ$ given in \eqref{HasseA3}, and the path algebra $\sQ_q=\mathbb Z[q]\sQ$. 
Note that the Dynkin graph automorphism that interchanges 1 and 3 and fixes 2 induces a {\it graph automorphism} 
$\sigma:\sQ_q\longrightarrow\sQ_q.$
We will also use {\it subgraph automorphisms} induced by the graph automorphism on parabolic subgroups.

Given a path $p$ from $I$ to $J$, denote the path $\upsilon_{\emptyset, I}p\delta_{J,\emptyset}$ by $p^\wedge$. 

\begin{lemma}\label{Rtorsion}
Maintain the notation introduced in Section \ref{rank2}.
The following are torsion relations in $\sQ_q/\langle\mathcal J\rangle$.\footnote{Here each relation should be understood as the set of the relations obtained by applying (sub)graph 
automorphisms and the anti-involution \eqref{tau} to the relation. Thus, (T1) really represents 4 relations.}
$$\aligned
(\text{\rm T}1) \;& \delta_{\{1\}, \emptyset}\chi_2 \upsilon_{\emptyset, \{1\}}=  \upsilon_{\{1\}\{1, 2\}} \,\delta_{\{1, 2\}, \{1\}}+qe_{\{1\}}; \\
(\text{\rm T}2)\; & \delta_{\{1\}, \emptyset} \upsilon_{\emptyset, \{3\}}=\upsilon_{\{1\}, \{1, 3\}} \,\delta_{\{1, 3\}, \{3\}};\\
%(\text{T}3) \;& \delta_{\{2\}, \emptyset}\chi_1\chi_3\chi_2\upsilon_{\emptyset, \{1, 3\}}=\delta_{\{2\}, S}\upsilon_{S, \{1, 3\}}+
%q \upsilon_{\{2\}, \{2, 3\}} \,\delta_{\{2, 3\}, \{3\}}  \upsilon_{\{3\}, \{1, 3\}} \\&+  q \upsilon_{\{2\}, \{1, 2\}} \,\delta_{\{1, 2\}, \{1\}}  \upsilon_{\{1\}, \{1, 3\}} +
%q(q+1)  \delta_{\{2\}, \emptyset}\upsilon_{\emptyset, \{1, 3\}};\\%oldT20
%
(\text{\rm T}3) \;& \delta_{\{1, 2\}, \{1\}} \upsilon_{\{1\}, \{1, 3\}} \delta_{\{1, 3\}, \{1\}} \upsilon_{\{1\}, \{1, 2\}}=\upsilon_{\{1, 2\}, S}\delta_{S, \{1, 2\}}+
q(q+1) e_{\{1, 2\}};\\
(\text{\rm T}4) \;& \delta_{\{1, 2\}, \{2\}}\upsilon_{\{2\}, \{2, 3\}}\delta_{\{2, 3\}, \{3\}}\upsilon_{\{3\}, \{1, 3\}}= \upsilon_{\{1, 2\}, S}\delta_{S, \{1, 3\}}+ q\delta_{\{1, 2\}, \{1\}}\upsilon_{\{1\}, \{1, 3\}};\\%oldT28
\endaligned
$$
\end{lemma}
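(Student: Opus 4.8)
The plan is to verify each of (T1)--(T4) by the torsion criterion of Proposition \ref{general1}. Writing a relation as $\mathrm{LHS}=\mathrm{RHS}$, a $\mathbb Z[q]$-combination of paths from $I$ to $J$, the difference $p=\mathrm{LHS}-\mathrm{RHS}$ is $P$-torsion exactly when the ``capped off'' element $\upsilon_{\emptyset,J}\,p\,\delta_{I,\emptyset}$ vanishes in $\sQ_q/\langle\mathcal J\rangle$. By the proof of that proposition this capped element lies in the corner algebra $\bar e_\emptyset(\sQ_q/\langle\mathcal J\rangle)\bar e_\emptyset\cong H_q$, on which the structure map $\psi\colon\sQ_q/\langle\mathcal J\rangle\to E_q(W)$ restricts to an isomorphism onto $1_\emptyset E_q(W)1_\emptyset\cong\End_{H_q}(H_q)\cong H_q$; in particular $\psi$ is injective there. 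Hence $p$ is $P$-torsion if and only if the two endomorphisms of $H_q$ obtained by applying $\psi$ to $\upsilon_{\emptyset,J}\cdot\mathrm{LHS}\cdot\delta_{I,\emptyset}$ and to $\upsilon_{\emptyset,J}\cdot\mathrm{RHS}\cdot\delta_{I,\emptyset}$ coincide. Thus the whole statement reduces to four identities in the free $\mathbb Z[q]$-module $H_q$, each of which I would confirm by direct computation; since a left/right multiplication endomorphism of $H_q$ is determined by its value at $1\in H_q$, it suffices to compare these two values.

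To run the computations I would use that $\psi$ sends each $\upsilon_{I,J}$ to the inclusion $x_JH_q\hookrightarrow x_IH_q$ and each $\delta_{J,I}$ to multiplication by $h_{J,I}$, where $x_J=h_{J,I}x_I$ (so the Poincar\'e factor $\pi(J)/\pi(I)$ is already absorbed, cf.\ Remark \ref{d'}); a capped path then induces a multiplication endomorphism of $H_q$. A pure ``up to $K$ and back to $\emptyset$'' loop collapses directly: by the extended braid relations $(\mathcal J3)$ it equals $\chi_{K_\bullet}$, which under the identification $\chi_s\leftrightarrow x_s$ becomes multiplication by $x_K$. This disposes of the two short relations. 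For (T2), both capped sides reduce to $\chi_1\chi_3=x_1x_3=x_3x_1$, which are equal because $m_{1,3}=2$ and the generators commute. For (T1), the top loop through $\{1,2\}$ on the right collapses by $(\mathcal J3)$ to $x_{\{1,2\}}=x_1x_2x_1-qx_1$ (Lemma \ref{lastthm}), while the left capped side is $\chi_1\chi_2\chi_1=x_1x_2x_1$; adding the correction $qe_{\{1\}}\mapsto qx_1$ gives $x_{\{1,2\}}+qx_1=x_1x_2x_1$, so the two agree.

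The substantive cases are (T3) and (T4), whose left-hand paths pass through a rank-one vertex (such as $\{1\}$) rather than through $\emptyset$, so that over $\mathbb Z[q]$ they cannot be collapsed by $(\mathcal J3)$ alone --- which is precisely why they are torsion. Here I would evaluate the induced endomorphisms honestly. The capped right-hand sides each contain a loop through the top vertex $S$, which $(\mathcal J3)$ collapses to $x_S=x_{\{1,2,3\}}=C^+_{w_0}$, together with the correction terms $q(q+1)e_{\{1,2\}}\mapsto q(q+1)x_{\{1,2\}}$ in (T3) and $q\,\delta_{\{1,2\},\{1\}}\upsilon_{\{1\},\{1,3\}}$ in (T4). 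The capped left-hand sides unwind into products of the maps $h_{J,I}$ and the idempotent images $x_I$, which I would compute using the multiplication rule \eqref{CsCw} and the expansions of Lemmas \ref{lastthm} and \ref{lastlemma}, together with the explicit values of the relevant $h_{J,I}$ (for instance $h_{\{1,3\},\{1\}}=x_3$, and the rank-two ones being three-term sums). The sandwich relations $(\mathcal J2)$ are used throughout to re-route paths so that decompositions such as $\upsilon_{\emptyset,\{1,2\}}=\upsilon_{\emptyset,\{1\}}\upsilon_{\{1\},\{1,2\}}$ and their transposes are consistent.

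The main obstacle I anticipate is exactly this bookkeeping in (T3) and (T4): one must match a product of the $h_{J,I}$'s against the expansion of $x_{\{1,2,3\}}$ and simultaneously reproduce the exact $q$-scalars of the correction terms, confirming that the residual lower-order contributions assemble into precisely $q(q+1)x_{\{1,2\}}$ and the stated degree-one correction. The Poincar\'e ratios cancel automatically because $\psi(\delta_{J,I})$ is multiplication by $h_{J,I}$, so the delicate point is purely the collection of $H_q$-terms. Finally, as recorded in the footnote, each displayed relation stands for an orbit under the subgraph automorphisms $\sigma$ and the anti-involution \eqref{tau}; since these induce an automorphism and an anti-automorphism of $\sQ_q/\langle\mathcal J\rangle$ that preserve the multiplicative set $P$ and hence $P$-torsion, verifying one representative of each orbit suffices, keeping the explicit work to four $H_q$-computations.
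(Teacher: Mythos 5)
Your proposal is correct, and it establishes the lemma by a route that differs from the paper's in one substantive respect. Both arguments cap the relations off at $\emptyset$ and conclude via Proposition \ref{general1}, and both ultimately rest on identities in $H_q$; the difference is where those identities are verified. The paper never leaves $\sQ_q/\langle\mathcal J\rangle$: for (T3)--(T4) it multiplies the capped left-hand side by $(q+1)$, uses $(\mathcal{J}1)$ to rewrite each $(q+1)e_{\{s\}}$ as $\delta_{\{s\},\emptyset}\upsilon_{\emptyset,\{s\}}$, and then collapses every loop through $\emptyset$ by $(\mathcal{J}3)$ into polynomials in the $\chi_s$; the outcome $(q+1)(\text{LHS})^\wedge=(q+1)(\text{RHS})^\wedge$ holds in the quotient itself, so $(q+1)\pi(I)\pi(J)\in P$ is an explicit annihilator of the difference and the torsion conclusion follows from the definition alone, with no injectivity statement needed (and (T1)--(T2) come for free as the rank-2 relations of Theorem \ref{rank2relations}, which you instead recompute directly -- that is fine). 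You push everything through $\psi$ and compare concrete endomorphisms of $H_q$, which makes the verification mechanical: (T4) reduces to the single identity $h_{\{1,2\},\{2\}}h_{\{2,3\},\{3\}}x_{\{1,3\}}=x_S+q\,h_{\{1,2\},\{1\}}x_{\{1,3\}}$, and (T3) to $h_{\{1,2\},\{1\}}h_{\{1,3\},\{1\}}x_{\{1,2\}}=x_S+q(q+1)x_{\{1,2\}}$. The price is a logical dependency the paper's proof of this lemma avoids: to pull the equality back you need $\psi$ to be injective on $\bar e_\emptyset(\sQ_q/\langle\mathcal J\rangle)\bar e_\emptyset$, i.e.\ the isomorphism with $H_q$ asserted in the proof of Proposition \ref{general1}. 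You may cite that assertion, but note a weaker fact suffices and is cleaner: $\ker\psi=\mathcal K/\langle\mathcal J\rangle$ is $P$-torsion by the localization argument of Section \ref{rank2}, so $\psi(p^\wedge)=0$ already gives $f\,p^\wedge=0$ for some $f\in P$, hence $f\pi(I)\pi(J)\,p=0$. Finally, you leave the two substantive computations unexecuted; they are finite calculations in $H_q(\fS_4)$ and do come out as claimed (they are exactly the identities the paper's internal rewriting also ends up proving), so this is incompleteness of execution rather than a flaw in the plan.
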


%\delta_{\{1, 2\}, \{2\}}\upsilon_{\{2\}, \{2, 3\}}\delta_{\{2, 3\}, \{3\}}\upsilon_{\{3\}, \{1, 3\}}= 
%\delta_{\{1, 2\}, S}\upsilon_{S, \{1, 3\}}+ q  \delta_{\{1, 2\}, \{1\}}\upsilon_{\{1\}, \{1, 3\}};

\begin{proof}
(T1) and (T2) are rank 2 torsion relations from Theorem \ref{rank2relations}. 
We prove (T4), and (T3) can be done similarly. If we put
 $$p=\upsilon_{\{2\},  \{2, 3\}}\delta_{\{2, 3\}, \{3\}}\upsilon_{\{3\}, \{1, 3\}} \delta_{\{1, 3\}, \emptyset} \,\text{ and } \,
l=\upsilon_{\{3\}, \{1, 3\}} \delta_{\{1, 3\}, \emptyset},$$ then 
$$
\aligned
(q+1)({\text{LHS}})^\wedge&=\upsilon_{\emptyset,   \{1, 2\}}\delta_{\{1, 2\}, \{2\}}(q+1)e_{\{2\}} \cdot  p \\
&=(\chi_2\chi_1 \chi_2-q\chi_2)\upsilon_{\emptyset, \{2\}} p,  \quad (\text{by ($\mathcal J$1), ($\mathcal J$3)})\\
&= (\chi_2\chi_1 -qe_\emptyset)\upsilon_{\emptyset, \{2\}} (q+1) p\\
&=(\chi_2\chi_1  - qe_{\emptyset}) \upsilon_{\emptyset, \{2\}}  \upsilon_{\{2\},  \{2, 3\}}\delta_{\{2, 3\}, \{3\}} \cdot (q+1)\cdot l\\
&=(\chi_2\chi_1  - qe_{\emptyset})   (\chi_3\chi_2 \chi_3-q\chi_3)\upsilon_{\emptyset, \{3\}} \cdot l,  \quad (\text{by ($\mathcal J$1), ($\mathcal J$3)})\\
&=(\chi_2\chi_1  - qe_{\emptyset})   (\chi_3\chi_2 -qe_{\emptyset})\upsilon_{\emptyset, \{3\}} (q+1)\cdot l,   \quad (\text{by ($\mathcal J$1)})\\
&=(q+1)( \chi_2\chi_1\chi_3\chi_2-q\chi_2\chi_1-q\chi_3\chi_2+q^2e_{\emptyset}) \upsilon_{\emptyset, \{3\}}  l\\
&=(q+1)( \chi_2\chi_1\chi_3\chi_2-q\chi_2\chi_1-q\chi_3\chi_2+q^2e_{\emptyset})\chi_3\chi_1,  \quad (\text{by ($\mathcal J$3)})\\
&=(q+1)(\chi_S+q\chi_{\{1, 2\}} \chi_3)\\%=(q+1)(\chi_S+q(\chi_1\chi_2-qe_{\emptyset})\chi_{1}\chi_3)\\
&=(q+1)[({\delta_{\{1, 2\}, S}\upsilon_{S, \{1, 3\}}})^\wedge+q({\delta_{\{1, 2\}, \{1\}}\upsilon_{\{1\}, \{1, 3\}}})^\wedge], \quad (\text{by ($\mathcal J$3)),($\mathcal J$1)}).\\
\endaligned
$$
So by defnition and Proposition \ref{general1}, (T4) is a torsion relation. 
\end{proof}

%Similar to $\chi_I$, for any polynomial $Y$ in $x_{i}$, let $\chi_{Y}=Y|_{\{\chi_i\}_i}$.
%For $I\in \Lambda$, denote the longest word in the subgroup $w_I$. 

Let $\mathcal K$ be the ideal of $\sQ_q$ generated by the quasi-idempotent relations and sandwich relations together the four sets relations (T1)--(T4). By footnote 2,  both the anti-automorphism $\tau$ and the graph automorphism $\sigma$ stabilise $\mathcal K$.
Let
$$A=\sQ_q/ {\mathcal{K}}.$$  
Then $\tau$ and $\sigma$ induces an anti-automorphism and an automorphism on $A$. 
We will use the same letters to denote them.
Observe that  (cf. the proof of Theorem \ref{tildeQ})
\begin{equation}\label{Heckeisom}
\bar e_\emptyset A \bar e_\emptyset \cong H_q.
\end{equation}

Denote by $P(J, I)_\emptyset$ the subspace in $A$, consisting of paths from $I$ to $J$ via $\emptyset$ and 
$\chi_{\underline{w}}=\chi_{s_{i_1}}\dots \chi_{s_{i_t}}$ for a given reduced word $w=s_{i_1}\dots s_{i_t}\in W$.
We call a path of the form  $\upsilon_{L, K} \delta_{K, I}$ {\it a hook}, where $I, L\subset K$. Further, if $|K|=i$, then the hook is called 
a level-$i$ hook. The relations (T1) and (T2) show that a level-2 hook is a linear combination of paths without level-2 hooks.

\begin{lemma}\label{pathsviathetop}
The space $P(J, I)_\emptyset$ is spanned by 
$${\mathscr P}_{J,I}^\emptyset=\{ \delta_{J, \emptyset} \chi_{\underline w} \upsilon_{\emptyset, I}\mid w\in D_{IJ} \text{ and } \underline{w} \text{ is a fixed reduced expression of  } w\}.$$
\end{lemma}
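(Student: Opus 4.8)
The plan is to prove the spanning statement in three steps: first, reduce an arbitrary path through $\emptyset$ to the \emph{sandwiched shape} $\delta_{J,\emptyset}\,c\,\upsilon_{\emptyset,I}$ with $c$ a loop at $\emptyset$; second, expand $c$ in the $\chi_{\underline{w}}$'s using the Hecke identification $\bar e_\emptyset A\bar e_\emptyset\cong H_q$ of \eqref{Heckeisom}; and third, cut the index set down from $W$ to $D_{IJ}$ by means of the Kazhdan--Lusztig recursion \eqref{CsCw}. I expect the first step to be the main work, and the other two to be routine given the earlier results.

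For the reduction to sandwiched shape I would argue by eliminating hooks, in the sense introduced just before the lemma. The sandwich relations make any monotone ascent or descent between two vertices a well-defined path, independent of the intermediate choices, and the quasi-idempotent relations collapse a covering down-then-up to a scalar multiple of a trivial path. The torsion relations of Lemma \ref{Rtorsion} are exactly the tool that lowers hook level: (T1) and (T2) rewrite a level-$2$ hook as a path that instead detours through $\emptyset$ (producing a factor $\chi_s$) plus a strictly shorter term, while (T3), (T4), together with Lemma \ref{pathsviathebottom} which collapses any passage through the top vertex $S$ to the direct arrows $\upsilon_{\cdot,S},\delta_{S,\cdot}$, rewrite a level-$3$ hook as a combination of products of level-$2$ hooks and scalars. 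I would then induct on the pair (number of level-$3$ hooks, number of level-$2$ hooks): each use of (T3)/(T4) removes a level-$3$ hook without creating a new one, and each use of (T1)/(T2) trades a level-$2$ hook for an excursion of level $\le 1$, i.e.\ for a word in the loops $\chi_s$ at $\emptyset$. When no hook of level $\ge 2$ survives, the only excursions are $\emptyset$-loops, so the path has the form $\delta_{J,\emptyset}\,c\,\upsilon_{\emptyset,I}$ with $c$ a product of $\chi_s$'s. The delicate part, and the main obstacle, is the bookkeeping that guarantees these rewrites terminate, stay inside $P(J,I)_\emptyset$, and are genuinely driven by the four relation families (one should also invoke the (sub)graph automorphisms and the anti-involution $\tau$ of \eqref{tau} to cover the symmetric cases, per the footnote to Lemma \ref{Rtorsion}).

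Once a path is in sandwiched shape, $c$ lies in $\bar e_\emptyset A\bar e_\emptyset\cong H_q$, where $\chi_s\mapsto x_s=C_s^+$. Since the products $\chi_{\underline{w}}$ correspond to $C^+_{\underline{w}}$, which by \eqref{C+3} are unitriangular against the canonical basis $\{C_w^+\}$, the family $\{\chi_{\underline{w}}\}_{w\in W}$ spans $H_q$; hence $P(J,I)_\emptyset$ is spanned by $\{\delta_{J,\emptyset}\chi_{\underline{w}}\upsilon_{\emptyset,I}\}_{w\in W}$. To pass to $w\in D_{IJ}$ I would use the two absorption identities $\delta_{J,\emptyset}\chi_s=(q+1)\delta_{J,\emptyset}$ for $s\in J$ and $\chi_t\upsilon_{\emptyset,I}=(q+1)\upsilon_{\emptyset,I}$ for $t\in I$, each immediate from the quasi-idempotent relation $\delta_{\{s\},\emptyset}\upsilon_{\emptyset,\{s\}}=(q+1)e_{\{s\}}$ and the sandwich factorisation of the canonical paths $\delta_{J,\emptyset},\upsilon_{\emptyset,I}$. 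If $w$ has a descent in the relevant parabolic, the recursion \eqref{CsCw} writes $C_w^+$ through $C^+_{sw}$ (or $C^+_{wt}$) and strictly shorter $C_y^+$ with coefficients in $\mathbb Z[q]$; applying $\delta_{J,\emptyset}(-)\upsilon_{\emptyset,I}$ and the absorption identities then expresses $\delta_{J,\emptyset}C_w^+\upsilon_{\emptyset,I}$ in terms of strictly shorter such elements. Inducting on $\ell(w)$ drives $w$ into the distinguished double coset representatives $D_{IJ}$, and the same length filtration shows that the $\chi_{\underline{w}}$-family and the $C_w^+$-family indexed by $D_{IJ}$ have equal span, giving $\mathscr{P}_{J,I}^\emptyset$ as a spanning set. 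Crucially this last reduction never divides by a Poincar\'e polynomial, so it is valid already over $\mathbb Z[q]$; and the cardinality of $\mathscr{P}_{J,I}^\emptyset$ equals that of the double coset set, matching the rank of the relevant $\Hom$-space as in the proof of Theorem \ref{rank2relations}, which is a reassuring consistency check on the argument.
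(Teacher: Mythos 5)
Your three-step skeleton --- reduce to the sandwiched shape $\delta_{J,\emptyset}\,c\,\upsilon_{\emptyset,I}$, expand $c$ inside $\bar e_\emptyset A\bar e_\emptyset\cong H_q$, then cut the index set down to $D_{IJ}$ --- is exactly the paper's, and your second and third steps are sound (the third is in fact spelled out more carefully than in the paper, which dismisses the passage from $W$ to $D_{IJ}$ by saying the lemma ``follows from the isomorphism'' \eqref{Heckeisom}). The gap is in the first step, which you rightly call the main work. Your induction rests on the claim that (T1)/(T2), together with their images under the (sub)graph automorphisms and $\tau$, rewrite \emph{every} level-$2$ hook as an $\emptyset$-detour plus a shorter term. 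That is false. Those relations only cover hooks with particular endpoints: equal endpoints for the non-commuting parabolics ($\upsilon_{\{s\},K}\delta_{K,\{s\}}$ with $K=\{1,2\}$ or $\{2,3\}$) and opposite endpoints for the commuting one ($\upsilon_{\{1\},\{1,3\}}\delta_{\{1,3\},\{3\}}$). The remaining level-$2$ hooks, such as $\upsilon_{\{1\},\{1,3\}}\delta_{\{1,3\},\{1\}}$ or $\upsilon_{\{1\},\{1,2\}}\delta_{\{1,2\},\{2\}}$, are \emph{not} expressible over $\mathbb Z[q]$ as combinations of paths through $\emptyset$: only $(q+1)$ times such a hook is (cf.\ Lemma \ref{torsion}), and accordingly these very hooks appear as independent standard paths in the sets $B_{\{1\},\{1\}}$ and $B_{\{1\},\{2\}}$ used to prove Theorem \ref{A3}. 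Concretely, in $E_q(\fS_4)$ the image of $\upsilon_{\{1\},\{1,3\}}\delta_{\{1,3\},\{1\}}$ sends $x_{\{1\}}\mapsto x_{\{1,3\}}$, whereas every path from $\{1\}$ to $\{1\}$ through $\emptyset$ sends $x_{\{1\}}$ into $x_{\{1\}}H_qx_{\{1\}}$, and $x_{\{1,3\}}\notin x_{\{1\}}H_qx_{\{1\}}$ (reduce modulo $q+1$ and compare coefficients of $T_e$). So your rewriting procedure halts the first time it meets such a hook in the interior of a path, and no bookkeeping of hook counts can repair the induction as you set it up.

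What is missing is a positional idea rather than extra relations: a hook can be dissolved only when it sits next to a complete descent to $\emptyset$ (or, applying $\tau$, a complete ascent from $\emptyset$), because the sandwich relations then allow that descent to be re-routed through whichever singleton matches the hook's lower endpoint, e.g.\ $\upsilon_{\emptyset,\{1,2\}}\delta_{\{1,2\},\{2\}}=\upsilon_{\emptyset,\{2\}}\bigl(\upsilon_{\{2\},\{1,2\}}\delta_{\{1,2\},\{2\}}\bigr)$, after which (T1) applies. This is precisely how the paper's proof is organised: it factors $p=lr$ at the first visit to $\emptyset$, writes the tail as $r=\upsilon_{\emptyset,K}\delta_{K,L}r'$ with $|L|$ minimal, converts only this hook --- the one touching $\emptyset$ --- into loops at $\emptyset$ times $\upsilon_{\emptyset,L}r'$ (sandwich relations plus (T1)/(T2) for $|K|=2$, plus (T4) for $K=S$), and inducts on the number of hooks in the tail; the minimality of $|L|$ is what guarantees that $\upsilon_{\emptyset,L}r'$ has strictly fewer hooks, so the cleaning propagates outward from $\emptyset$. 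With your hook elimination re-organised in this way, your steps two and three go through unchanged.
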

\begin{proof} If both  $I$ and $J$ are  the empty set $\emptyset$, by the isomorphism in (\ref{Heckeisom}),  it is done. Otherwise, by symmetry we may assume that $I\not=\emptyset$.
We claim that any path via the vertex $\emptyset$ is a linear combination of paths $ \delta_{J, \emptyset} Y \upsilon_{\emptyset, I}$, where $Y$ is a path from $\emptyset$ to 
$\emptyset$, and then the lemma again follows from the isomorphism in (\ref{Heckeisom}). Let $p$ be a path in $P(J, I)_\emptyset$. 
We may assume that $p$ is not of the form  $ \delta_{J, \emptyset} Y \upsilon_{\emptyset, I}$. Write 
$p=lr$, where $r$ is minimal ending at $\emptyset$. If $r\in\text{span}({\mathscr P}_{\emptyset,I}^\emptyset)$, by symmetry, $p\in\text{span}({\mathscr P}_{J,I}^\emptyset)$. We need only to prove the claim for $r$.
By the assumption, there is a hook in $r$. 
Write $r=\upsilon_{\emptyset, K} \delta_{K, L} r'$, where $L\subset K$ and $|L|$ is minimal. Further the cardinality $|K|>1$, by assumption.

Now by the sandwich relations (if necessary) together with (T1) or (T2) when $K=2$, or with (T4) when $|K|=3$ (i.e. $K=S$), 
the hook $\upsilon_{\emptyset, K} \delta_{K, L}$
is a linear combination $\sum_{Y}a_YY\upsilon_{\emptyset, L}$, where $a_Y\in \mathbb{Z}[q]$ and Y is a path from $\emptyset$ to $\emptyset$. So 
$$r=\bigg(\sum_{Y}a_YY\bigg)\upsilon_{\emptyset, L}r' $$
Note that for any $r'$ in the sum,  $\upsilon_{\emptyset, L}r'$ is a path ending in $\emptyset$ and the number of hooks in $\upsilon_{\emptyset, L}r'$ is 
less than that of $r$. By induction, the claim hold for each $\upsilon_{\emptyset, L}r'$  and thus for $r$. This finishes the proof.
\end{proof}

\begin{theorem}\label{A3}
The ideal $\mathcal{K}$ is generated by the sandwich relations, quasi-idempotent relations and the torsion relations of type
{\rm (T1)--(T4)}.  Thus 
we obtain a presentation of $E_q(\fS_4)$ over $\mathbb{Z}[q]$.
\end{theorem}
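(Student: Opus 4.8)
The plan is to show that the surjection $\psi:\sQ_q/\langle\mathcal K\rangle\to E_q(\fS_4)$ (which exists because all the defining relations of $\mathcal K$ hold in $E_q(\fS_4)$, as verified in Lemma \ref{Rtorsion} and the rank-2 computations) is in fact an isomorphism. By Proposition \ref{general1} and the short exact sequence \eqref{last2}, it suffices to prove that $\mathcal K$ already contains \emph{all} the $P$-torsion elements, equivalently that $A=\sQ_q/\mathcal K$ has no residual $P$-torsion beyond what has been quotiented out. The cleanest way to establish this is a rank count: I would show that for each pair $I,J\in\La$ the space $\bar e_J A\bar e_I$ is spanned by a set of paths whose cardinality equals the rank of $\Hom_{H_q}(x_IH_q,x_JH_q)$ as a free $\mathbb Z[q]$-module, namely $|W_J\backslash W/W_I|=|D_{IJ}|$. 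Since $\psi$ is surjective and the target is free of exactly that rank, matching the spanning cardinality forces $\psi$ to be a bijection on each $\bar e_JA\bar e_I$, hence an isomorphism.

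First I would reduce every path in $A$ to a standard form. The key structural tool is Lemma \ref{pathsviathetop}: any path factoring through the bottom vertex $\emptyset$ lies in the span of $\{\delta_{J,\emptyset}\chi_{\underline w}\upsilon_{\emptyset,I}\mid w\in D_{IJ}\}$, a set of the correct cardinality $|D_{IJ}|$ via the Hecke isomorphism \eqref{Heckeisom}. So the real content is to show that \emph{every} path from $I$ to $J$ can be rewritten, modulo $\mathcal K$, as a linear combination of paths that pass through $\emptyset$. The mechanism for this is hook removal: a path containing a level-$i$ hook $\upsilon_{L,K}\delta_{K,L'}$ can, using the sandwich relations to normalise the hook and then the torsion relations (T1)--(T4) to break it, be rewritten as a combination of paths with strictly fewer or strictly lower-level hooks. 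Relations (T1),(T2) dispose of level-2 hooks, while (T3),(T4) together with their images under the graph automorphism $\sigma$ and anti-involution $\tau$ dispose of level-3 hooks (those through $S$). I would set up an induction on the number of hooks, or more carefully on a lexicographic measure combining the maximal hook level and the number of top-level hooks, so that each application of a torsion relation strictly decreases the measure and eventually drives the path down to one factoring through $\emptyset$.

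The main obstacle I anticipate is the hook-reduction step for level-3 hooks and the bookkeeping that guarantees termination. Relations (T3) and (T4) each rewrite a specific composite of arrows through $S$, but a general path through $S$ need not present itself in exactly that shape; one must first use the sandwich relations (J2) and the quasi-idempotent relations (J1) to manoeuvre the incoming and outgoing legs into the canonical configuration to which (T3)/(T4) (or a $\sigma$- or $\tau$-image thereof) applies. There is a genuine risk that breaking one hook reintroduces lower-level hooks, so the induction measure must be chosen to be monotone under \emph{all} the rewriting moves simultaneously — this is where I expect most of the work to live, and it is essentially the content already packaged in the inductive argument of Lemma \ref{pathsviathetop}. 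Once the reduction to $\emptyset$-factoring paths is complete, the cardinality match is immediate and the theorem follows.

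Concretely, I would structure the proof as: (i) note the surjection $\psi$ and recall from Proposition \ref{general1} that $\mathcal K$ differs from the full relation ideal only by $P$-torsion, so it suffices to bound $\dim_{\mathbb Z[q]}\bar e_JA\bar e_I$ above by $|D_{IJ}|$; (ii) invoke Lemma \ref{pathsviathetop} to handle paths through $\emptyset$, giving exactly the bound $|D_{IJ}|$ on that subspace; (iii) prove by the hook-removal induction that every path lies in that subspace modulo $\mathcal K$, using (T1)--(T4) up to $\sigma$ and $\tau$ to eliminate hooks level by level; (iv) conclude that $\bar e_JA\bar e_I$ has rank at most $|D_{IJ}|$, which together with surjectivity of $\psi$ onto the free rank-$|D_{IJ}|$ module $\Hom_{H_q}(x_IH_q,x_JH_q)$ forces $\psi$ to be an isomorphism, whence $\mathcal K$ equals the full relation ideal and we obtain the asserted presentation of $E_q(\fS_4)$.
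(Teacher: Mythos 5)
Your step (iii) --- the claim that, modulo $\mathcal K$, \emph{every} path from $I$ to $J$ can be rewritten as a $\mathbb Z[q]$-linear combination of paths passing through $\emptyset$ --- is false, and it is the load-bearing step of your argument. Take the trivial path $e_{\{1,2\}}$. Under the surjection $\psi:\sQ_q/\mathcal K\to E_q(\fS_4)$, any path through $\emptyset$ maps to a homomorphism factoring through $x_\emptyset H_q=H_q$, and any endomorphism of $x_JH_q$ ($J=\{1,2\}$) factoring through $H_q$ sends $x_J$ into $x_JH_qx_J$. A direct computation with the normal form of double cosets shows that the coefficient of $T_e$ ($e$ the identity of $W$) in any element of $x_JH_qx_J$ is divisible by $\pi(J)$, whereas the identity map sends $x_J$ to $x_J$, whose $T_e$-coefficient is $1$. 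Hence $1_{\{1,2\}}$ is not a $\mathbb Z[q]$-combination of maps factoring through $H_q$, so $e_{\{1,2\}}$ (and likewise every $e_K$ with $K\neq\emptyset$, and single arrows such as $\upsilon_{\{1\},\{1,2\}}$) cannot be reduced as you claim; your proposed spanning set is genuinely too small. The root cause is the asymmetry of the integral quasi-idempotent relation: over $R$ one can invert it and write $e_J$ as a unit multiple of a cycle through $\emptyset$, but over $\mathbb Z[q]$ the relation $\delta_{J,I}\upsilon_{I,J}=\frac{\pi(J)}{\pi(I)}e_J$ only \emph{shortens} paths, never lengthens them. Note also that your own rewriting moves leave the proposed subspace: (T1) trades the hook $\upsilon_{\{1\},\{1,2\}}\delta_{\{1,2\},\{1\}}$ for $\delta_{\{1\},\emptyset}\chi_2\upsilon_{\emptyset,\{1\}}-qe_{\{1\}}$, and the term $qe_{\{1\}}$ does not pass through $\emptyset$, so "hook removal" cannot terminate inside the span of $\emptyset$-factoring paths. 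Finally, Lemma \ref{pathsviathetop} does not package this reduction: it only describes a spanning set for the space $P(J,I)_\emptyset$ of paths that \emph{already} factor through $\emptyset$.

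The paper's proof runs the reduction in the opposite direction, and this is not a cosmetic difference. It fixes, via the modified Williamson algorithm, explicit lists $B_{J,I}$ of \emph{standard paths} --- these include trivial paths, single arrows, the level-$3$ hooks $\upsilon_{J,S}\delta_{S,I}$ and selected level-$2$ hooks, i.e.\ mostly paths \emph{not} through $\emptyset$ --- with $|B_{J,I}|$ equal to the rank of $\Hom_{H_q}(x_IH_q,x_JH_q)$. It then shows these span $A$: paths through $\emptyset$ are rewritten \emph{into} standard paths (Lemma \ref{pathsviathetop} plus a case-by-case claim), paths through $S$ are multiples of $\upsilon_{J,S}\delta_{S,I}$ by Lemma \ref{pathsviathebottom}, and the remaining middle paths are reduced using the quasi-idempotent relations and (T3)--(T4) applied to the two specific configurations $r$ and $t$ (up to $\sigma$ and $\tau$). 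Your final counting step is the correct logic --- a spanning set of cardinality $|D_{IJ}|$ surjecting onto a free $\mathbb Z[q]$-module of rank $|D_{IJ}|$ forces an isomorphism, whereas a mere rank bound would not, since $P$-torsion could survive --- but it must be run with the standard-path spanning set, not with the $\emptyset$-factoring paths.
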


Before giving a proof of the theorem, we modify Williamson's  algorithm \cite[1.3.4, 2.2.7]{GW1} that computes reduced translation sequences and a basis for $\mathrm{Hom}_{H_q}(x_IH_q, x_JH_q)$ for all $I$ and $J$. See also the discussion at the end of Section 6.
We then use the modified algorithm to produce a set $B_{JI}$ of {\it standard paths} from $I$ to $J$ in the following list. We will show that those standard paths span the algebra $A$ in the proof of Theorem \ref{A3}.
$$\aligned
B_{S, I}&=\{\delta_{S,  I}\}; \; 
B_{\emptyset, \emptyset}=\{\chi_{\underline{w}}\mid w\in W \text{ and } \underline{w} \text{ is a fixed reduced expression of  }  w\};\\
B_{\{1,2\}, \{1, 2\}}&=\{{e}_{\{1, 2\}},\, \upsilon_{\{1, 2\}, S} \,\delta_{S, \{1, 2\}} \};\\
B_{\{1,3\}, \{1, 3\}}&=\{{e}_{\{1, 3\}},  \, \delta_{\{1, 3\}, \emptyset}\,\chi_2\upsilon_{\emptyset, \{1, 3\}}, \, \upsilon_{\{1, 3\}, S} \,\delta_{S, \{1, 3\}} \};\\\,
B_{\{1,2\}, \{1, 3\}}&=\{ \delta_{\{1, 2\}, \{1\}}\upsilon_{\{1\},\{1, 3\}},\, \upsilon_{\{1, 2\}, S} \,\delta_{S, \{1, 3\}} \};\\
B_{\{1,2\}, \{2, 3\}}&=\{ \delta_{\{1, 2\}, \{2\}}\upsilon_{\{2\},\{2, 3\}}, \,\upsilon_{\{1, 2\}, S} \,\delta_{S, \{2, 3\}} \};\\
B_{\{1\}, \{1, 3\}}&=\{ \upsilon_{\{1\}, \{1, 3\}}, \,\delta_{\{1\}, \emptyset} \chi_2\upsilon_{\emptyset,\{1, 3\}}, \,\delta_{\{1\}, \emptyset} \chi_3\chi_2\upsilon_{\emptyset,\{1, 3\}}, \,
\upsilon_{\{1\}, S} \,\delta_{S, \{1, 3\}} \};\\
B_{\{1\}, \{1, 2\}}&=\{ \upsilon_{\{1\}, \{1, 2\}}, \, \upsilon_{\{1\}, \{1, 3\}}\delta_{\{1, 3\}, \{1\}}  \upsilon_{\{1\}, \{1, 2\}}, \,
\upsilon_{\{1\}, S} \,\delta_{S, \{1, 2\}} \};\\
B_{\{1\}, \{2, 3\}}&=\{ \delta_{\{1\}, \emptyset}\upsilon_{\emptyset, \{2,3\}}, \, \upsilon_{\{1\}, \{1, 2\}}\delta_{\{1, 2\}, \{2\}}  \upsilon_{\{2\}, \{2, 3\}}, \,
\upsilon_{\{1\}, S} \,\delta_{S, \{2, 3\}} \};\\
B_{\{2\}, \{1, 2\}}&=\{ \upsilon_{\{2\}, \{1, 2\}}, \,\delta_{\{2\}, \emptyset} \chi_3\upsilon_{\emptyset,\{1, 2\}}, \,\upsilon_{\{2\}, S} \,\delta_{S, \{1, 2\}} \};\\
B_{\{2\}, \{1, 3\}}&=\{ \delta_{\{2\}, \emptyset}\upsilon_{\emptyset, \{1, 3\}}, \, \upsilon_{\{2\}, \{1, 2\}}\delta_{\{1, 2\}, \{1\}}  \upsilon_{\{1\}, \{1, 3\}}, \,
\upsilon_{\{2\}, \{2, 3\}}\delta_{\{2, 3\}, \{3\}}  \upsilon_{\{3\}, \{1, 3\}}, \\ &\,\upsilon_{\{2\}, S} \,\delta_{S, \{1, 3\}} \};\\
B_{\{1\}, \{1\}}&= \{ e_{\{1\}}, \upsilon_{\{1\}, \{1, 2\}}\delta_{\{1, 2\}, \{1\}}, \, \upsilon_{\{1\}, \{1, 3\}}\delta_{\{1, 3\}, \{1\}}, \,   
\upsilon_{\{1\}, \{1, 3\}}\delta_{\{1, 3\}, \emptyset}\chi_2 \upsilon_{\emptyset, \{1\}}, \,\\&
 \delta_{\{1\}, \emptyset}\chi_2\upsilon_{\emptyset, \{1, 3\}}  \delta_{\{1, 3\}, \{1\}}, \,  
\upsilon_{\{1\}, \{1, 3\}}\delta_{\{1, 3\}, \emptyset}\chi_2\upsilon_{\emptyset, \{1, 3\}}  \delta_{\{1, 3\}, \{1\}}, \,
 \upsilon_{\{1\}, S}\,\delta_{S, \{1\}}\};\\
B_{\{1\}, \{2\}}&= \{ \delta_{\{1\}, \emptyset}\upsilon_{\emptyset, \{ 2\}}, \upsilon_{\{1\}, \{1, 3\}}\delta_{\{1, 3\}, \emptyset}\upsilon_{\emptyset, \{ 2\}}, 
\,  \upsilon_{\{1\}, \{1, 2\}}\delta_{\{1, 2\}, \{2\}}, \,
\delta_{\{1\}, \emptyset}\upsilon_{\emptyset, \{2, 3\}}\delta_{\{2, 3\}, \{2\}}, \,\\&
\delta_{\{1\}, \emptyset}\chi_2 \upsilon_{\emptyset, \{1, 3\}}\delta_{\{1, 3\}, \emptyset}\upsilon_{\emptyset, \{ 2\}}, \,
 \upsilon_{\{1\}, \{1, 3\}}\delta_{\{1, 3\}, \{1\}}\upsilon_{\{1\}, \{1, 2\}}\delta_{\{1, 2\}, \{2\}}, \,
 \upsilon_{\{1\}, S}\,\delta_{S, \{2\}}
\};\\ \endaligned$$
$$\aligned
B_{\{1\}, \{3\}}&= \{ \delta_{\{1\}, \emptyset}\upsilon_{\emptyset, \{ 3\}}, \,\delta_{\{1\}, \emptyset}\chi_2\upsilon_{\emptyset, \{ 3\}}, \,
\delta_{\{1\}, \emptyset}\upsilon_{\emptyset, \{2,  3\}}\delta_{\{2, 3\}, \{3\}},  \, \\&
\delta_{\{1\}, \emptyset}\chi_2\upsilon_{\emptyset,  \{1, 3\}}\delta_{\{1, 3\}, \{3\}}, \,
\upsilon_{\{1\}, \{1, 3\}} \delta_{\{1, 3\}, \emptyset} \chi_2\upsilon_{\emptyset,  \{1, 3\}}\delta_{\{1, 3\}, \{3\}}, \,\\&
\upsilon_{\{1\}, \{1, 2\}}\delta_{\{1, 2\}, \{2\}}\upsilon_{\{2\},  \{2, 3\}}\delta_{\{2, 3\}, \{3\}}, \,
 \upsilon_{\{1\}, S}\,\delta_{S, \{3\}}\};\\
B_{\{2\}, \{2\}}&= \{ e_{\{2\}}, \upsilon_{\{2\}, \{1, 2\}}\delta_{\{1, 2\}, \{2\}}, \, \upsilon_{\{2\}, \{ 2, 3\}}\delta_{\{2, 3\}, \{2\}}, \, 
\delta_{\{2\}, \emptyset}\upsilon_{\emptyset, \{1, 3\}}\delta_{\{1, 3\}, \emptyset}\upsilon_{\emptyset, \{ 2\}},  \,   \\&
\delta_{\{2\}, \emptyset}\chi_3\upsilon_{\emptyset,  \{1, 2\}}\delta_{\{1, 2\}, \{2\}}, \,
\delta_{\{2\}, \emptyset}\chi_1\upsilon_{\emptyset,  \{2, 3\}}\delta_{\{2, 3\}, \{2\}}, \, 
 \upsilon_{\{2\}, S}\,\delta_{S, \{2\}}\};\\\\
\endaligned
$$
The complete list for the sets of standard paths between any $I$ and $J$ can be obtained by applying (anti-)automorphisms $\sigma$ and $\tau$ to the sets above. 

\begin{proof}[Proof of Theorem \ref{A3}]
We first claim that any path $\delta_{J, \emptyset} \chi_{\underline w} \upsilon_{\emptyset, I}$ in ${\mathscr P}_{J,I}^\emptyset$ is a  linear combination of the standard paths given above.
%where $\underline{w}$ is a fixed reduced expression of $w\in D_{IJ}$, and 
Thus, by Lemma  \ref{pathsviathetop}, any path via the empty set  $\emptyset$ is a linear combination of the standard paths. This involves case-by-case checking.
To illustrate, we show that 
$$ \aligned \delta_{\{2\}, \emptyset}\chi_1\chi_3\chi_2\upsilon_{\emptyset, \{1, 3\}}&=\upsilon_{\{2\}, S}\delta_{S, \{1, 3\}}+
q \upsilon_{\{2\}, \{2, 3\}} \,\delta_{\{2, 3\}, \{3\}}  \upsilon_{\{3\}, \{1, 3\}} \\ & + q \upsilon_{\{2\}, \{1, 2\}} \,\delta_{\{1, 2\}, \{1\}}  \upsilon_{\{1\}, \{1, 3\}} +
q(q+1)  \delta_{\{2\}, \emptyset}\upsilon_{\emptyset, \{1, 3\}}. \endaligned $$
Indeed, 

$$
\aligned
{\text{LHS}}&=\delta_{\{2\}, \emptyset}\chi_1  \upsilon_{\emptyset, \{3\}} \cdot \delta_{\{3\}, \emptyset}\chi_2  \upsilon_{\emptyset, \{ 3\}} \cdot  \upsilon_{\{3\}, \{1, 3\}}\\
&\overset{\rm(T1)}=\delta_{\{2\}, \emptyset}\chi_1  \upsilon_{\emptyset, \{ 3\}} (\upsilon_{\{ 3\}, \{2, 3\}}  \delta_{\{2, 3\}, \{3\}} +qe_{\{3\}}) \upsilon_{\{3\}, \{1, 3\}}\\
&=\delta_{\{2\}, \emptyset}\chi_1  \upsilon_{\emptyset, \{ 2\}}\upsilon_{\{ 2\}, \{2, 3\}}  \delta_{\{2, 3\}, \{3\}}  \upsilon_{\{3\}, \{1, 3\}} + 
q\delta_{\{2\}, \emptyset}\chi_1 \upsilon_{\emptyset, \{ 3\}}  \upsilon_{\{3\}, \{1, 3\}}\\
&\overset{\rm(T1)}=( \upsilon_{\{ 2\}, \{1, 2\}}  \delta_{\{1, 2\}, \{2\}} +  qe_{\{2\}} )\upsilon_{\{ 2\}, \{2, 3\}}  \delta_{\{2, 3\}, \{3\}} \upsilon_{\{3\}, \{1, 3\}}
+ q(q+1)\delta_{\{2\}, \emptyset} \upsilon_{\emptyset, \{1, 3\}}\\
&=\upsilon_{\{ 2\}, \{1, 2\}} \cdot \delta_{\{1, 2\}, \{2\}}\upsilon_{\{ 2\}, \{2, 3\}}  \delta_{\{2, 3\}, \{3\}} \upsilon_{\{3\}, \{1, 3\}} + 
q\upsilon_{\{ 2\}, \{2, 3\}}  \delta_{\{2, 3\}, \{3\}} \upsilon_{\{3\}, \{1, 3\}}+
\\&\qquad\quad
+ q(q+1)\delta_{\{2\}, \emptyset} \upsilon_{\emptyset, \{1, 3\}}\vspace{-2ex}\\
&\overset{\rm(T4)}= \upsilon_{\{ 2\}, \{1, 2\}} \upsilon_{\{1, 2\}, S}\delta_{S, \{1, 3\}}+q\upsilon_{\{ 2\}, \{1, 2\}}  \delta_{\{1, 2\}, \{1\}} \upsilon_{\{1\}, \{1, 3\}} +\\&
q\upsilon_{\{ 2\}, \{2, 3\}}  \delta_{\{2, 3\}, \{3\}} \upsilon_{\{3\}, \{1, 3\}} + q(q+1)\delta_{\{2\}, \emptyset} \upsilon_{\emptyset, \{1, 3\}}= \text{RHS}.
\endaligned
$$

Next, by Lemma 
\ref{pathsviathebottom}, any path from $I$ to $J$ via $S$ is a multiple of $\upsilon_{J, S}\delta_{S, I}$, which is a standard path. So it suffices to consider those paths that  go via neither  $S$ nor the empty set $\emptyset$, i.e., 
paths in the following graph
\begin{equation}\label{middleA3}
\xymatrix{%\tilde{Q}:%\qquad\qquad& \emptyset  \ar@/^/[d]  \ar@/^/[dl] \ar@/^/[dr]&\\
\{1\} \ar@/^/[d] \ar@/_/[dr]  &\{2\}\ar@/^/[dr] \ar@/^/[dl]
&\{3\} \ar@/^/[d]\ar@/^/[dl]  \\
\{1, 2\}  \ar@/^/[ur]  \ar@/^/[u]&
\{1, 3\}  \ar@/^/[ur] \ar@/_/[ul]&\{2, 3\}]\ar@/^/[ul]\ar@/^/[u]\\}
%&\{1, 2, 3\} \ar@/^/[ur]\ar@/^/[ul] \ar@/^/[u]&}
\end{equation}
Further we only need to consider non-standard paths that do not contain 
\begin{itemize}\item[(1)] $\delta_{J, I}\upsilon_{I, J}$, where $I\sqsubset J$ (and $|I|=1$).  This equals $\frac{\pi(J)}{\pi(I)}e_{J}$ by the quasi-idempotent relations. 
So paths containing them can be shortened.
\item[(2)] the hooks $\upsilon_{\{s\}, \{s, t\}}\delta_{\{s, t\}, \{s\}}$ for $\{s, t\}=\{1, 2\}$ or $\{2, 3\}$, 
$\upsilon_{\{3\}, \{1, 3\}}\delta_{\{1, 3\}, \{1\}}$ and its reverse. As these hooks are standard and  paths containing them are a linear combinations of 
paths via the empty set $\emptyset$.  By the claim above, this means that paths containing the hooks are linear combinations of standard paths. 
\end{itemize}
In other words, we just consider non-standard paths in the above graph that does not contain cycles of length 2 and paths $\{s\}\to\{1,3\}\to\{t\}$ ($\{s,t\}=\{1,3\}$).
Hence, any such non-standard path must contain  $$r=\delta_{\{1, 2\}, \{1\}} \upsilon_{\{1\}, \{1, 3\}} \delta_{\{1, 3\}, \{1\}} \upsilon_{\{1\}, \{1, 2\}}\; \text{ or } 
\;t=\delta_{\{1, 2\}, \{2\}}\upsilon_{\{2\}, \{2, 3\}}\delta_{\{2, 3\}\{3\}}\upsilon_{\{3\}, \{1, 3\}},$$ or one of the paths obtained from $r$ and $t$ by symmetry. 
Now, by relations (T3)--(T4) and the claim above, such a path is a linear combination of standard paths, as required. This finishes the proof.
\end{proof}

\end{document}